\newcommand{\tmvolxx}{xx}
\newcommand{\tmyearyyyy}{yyyy}
\newcommand{\FirstPageHead}[3]{
{\footnotesize 
\vskip -8mm 
\centerline {Travaux math\'ematiques, \quad 
Volume #1 (#2), 
#3,\quad \copyright\  Universit\'e du Luxembourg}}\vspace{-3mm}}
\numberwithin{equation}{section}
\newtheorem{theorem}{Theorem}[section]
\newtheorem{lemma}[theorem]{Lemma}
\newtheorem{proposition}[theorem]{Proposition}
\newtheorem{corollary}[theorem]{Corollary}
\theoremstyle{definition}
\newtheorem{definition}[theorem]{Definition}
\newtheorem{example}[theorem] {Example}
\newtheorem{remark}[theorem]{Remark}
\numberwithin{equation}{section}
\newcommand{\N}{\mathbb{N}}
\newcommand{\R}{\mathbb{R}}
\newcommand{\C}{\mathbb{C}}
\newcommand{\K}{\mathbb{K}}
\newcommand{\as}{\mathfrak{as}}
\newcommand{\auteur}[1]{{\sc #1}}
\begin{document}
\thispagestyle{empty}
\FirstPageHead{\tmvolxx}{\tmyearyyyy}{\pageref{firstpage}--\pageref{lastpage}}
\label{firstpage}




\markboth{M. Bordemann, O. Elchinger and A. Makhlouf}{Twisting Poisson algebras, coPoisson algebras and Quantization }
$ $
\bigskip

\bigskip

\centerline{{\Large   Twisting Poisson algebras, coPoisson algebras}}

\

 \centerline{{\Large   and Quantization}}

\bigskip
\bigskip
\centerline{{\large by  Martin Bordemann, Olivier Elchinger }}

\

\centerline{{\large and Abdenacer Makhlouf}}

\vspace*{.7cm}

\begin{abstract}
The purpose of this paper is to study twistings of Poisson algebras or bialgebras, coPoisson algebras or bialgebras and star-products. We consider Hom-algebraic structures generalizing classical algebraic structures by twisting the identities by a linear self map. We summarize the  results on Hom-Poisson algebras and introduce  Hom-coPoisson algebras and bialgebras. We show that there exists a duality between Hom-Poisson bialgebras and Hom-coPoisson bialgebras. A relationship between enveloping Hom-algebras endowed with Hom-coPoisson structures and corresponding Hom-Lie bialgebra structures  is studied.   Moreover we set  quantization problems and  generalize the notion of star-product.  In particular, we characterize the twists for the Moyal-Weyl product for polynomials of several variables.
\end{abstract}

\pagestyle{myheadings}

\section*{Introduction}
The study of nonassociative algebras was originally motivated by
certain problems in physics and other branches of mathematics. The
first motivation to study nonassociative Hom-algebras comes from
quasi-deformations of Lie algebras of vector fields, in particular
$q$-deformations of Witt and Virasoro algebras
\cite{AizawaSaito, ChaiIsKuLuk,
CurtrZachos1, Kassel1,
Hu}.

Hom-Lie algebras were first introduced by Hartwig, Larsson and Silvestrov  in order to describe $q$-deformations of Witt and Virasoro algebras using $\sigma$-derivations (see \cite{HLS}). The corresponding associative type objects, called Hom-associative algebras were introduced by Makhlouf and Silvestrov in \cite{MakSil-def}. The enveloping algebras of Hom-Lie algebras were studied by Yau in \cite{Yau:EnvLieAlg}. The dual notions, Hom-coalgebras, Hom-bialgebras, Hom-Hopf algebras and Hom-Lie coalgebras,  were studied first in \cite{HomHopf,HomAlgHomCoalg}. The study was  enhanced  in \cite{Yau:YangBaxter,Yau:comodule} and done with a categorical point of view in \cite{Canepl2009}.  Further developments and results could be found in \cite{AEM,AmmarMakhloufJA2010,FregierGohrSilv,Gohr,Yau:homology,Yau:YangBaxter2}.
The notion of Hom-Poisson algebras appeared first in \cite{MakSil-def} and then studied in \cite{Yau:Poisson}. The main feature of Hom-algebra and Hom-coalgebra structures is that the classical identities are twisted by a  linear self map.

In this paper, we review the results on Hom-Poisson algebras and introduce the notions of Hom-coPoisson algebra and Hom-coPoisson bialgebra. We show that there is a duality between the Hom-coPoisson bialgebras and Hom-Poisson bialgebras, generalizing to Hom-setting the result in \cite{OhPark}. Moreover we set the quantization problems and  generalize the notion of star-product.  In particular, we characterize the twists for the Moyal-Weyl product for polynomials of several variables.
In Section \ref{twist-usual}, we summarize the definitions of the Hom-algebra and Hom-coalgebra structures twisting  classical structures of associative algebras, Lie algebras and their duals. We extend to Hom-structures  the classical relationship between the algebra and its finite dual and provide an easy tool to obtain a Hom-structure from a classical structure and provide some examples. In Section \ref{Hom-Poisson}, we review the main results on Hom-Poisson algebras from \cite{MakSil-def,Yau:Poisson} and construct some new examples. In Section \ref{Hom-coPoisson}, we introduce and study Hom-coPoisson algebras, Hom-coPoisson bialgebras and Hom-coPoisson Hopf algebras. We provide some key constructions and show a link between enveloping algebras (viewed as a Hom-bialgebra) endowed with a coPoisson structure and Hom-Lie bialgebras. Moreover, we show that there is a duality between Hom-Poisson bialgebra (resp. Hom-Poisson Hopf algebra) and Hom-coPoisson bialgebra (resp. Hom-coPoisson Hopf algebra). In Section \ref{def-Poisson}, we review the 1-parameter formal deformation of Hom-algebras (resp. Hom-coalgebras) and  their relationship to Hom-Poisson algebra (resp. Hom-coPoisson algebras). Then we state the quantization problem and define star-product in Hom-setting. Finally we study twistings of Moyal-Weyl star-product.

\section{Twisting usual structures: Hom-algebras and Hom-coalgebras} \label{twist-usual}

Throughout  this paper $\K$ denotes a field of characteristic $0$ and $A$ is a $\K$-module. In the sequel we denote by $\sigma$ the linear map $\sigma :A^{\otimes 3}\rightarrow A^{\otimes 3}$,   defined as $\sigma(x_1 \otimes x_2 \otimes x_3) = x_3 \otimes x_1 \otimes x_2$ and by $\tau_{ij}$  linear maps $\tau :A^{\otimes n}\rightarrow A^{\otimes n}$ 
where $\tau_{i j}(x_1\otimes\cdots\otimes x_i\otimes \cdots \otimes x_j \otimes \cdots\otimes x_n) = (x_1\otimes\cdots\otimes x_j\otimes \cdots \otimes x_i \otimes \cdots\otimes x_n)$.\\

We mean by a Hom-algebra  a triple $(A, \mu, \alpha)$ in which   $\mu : A^{\otimes 2} \to A$ is a linear map and $\alpha : A \to A$ is a linear self map. The linear map  $\mu^{op} : A^{\otimes 2} \to A$ denotes the opposite map, i.e. $\mu^{op} = \mu\circ\tau_{12} $.  A  Hom-coalgebra is a triple $(A, \Delta, \alpha)$ in which   $\Delta :A  \to A^{\otimes 2}$ is a linear map and $\alpha : A \to A$ is a linear self map. The linear map  $\Delta^{op} : A  \to A^{\otimes 2}$ denotes the opposite map, i.e. $\Delta^{op} = \tau_{12} \circ \Delta$.
For a linear self-map $\alpha : A \to A$, we denote by $\alpha^n$ the $n$-fold composition of $n$ copies of $\alpha$, with $\alpha^0 \cong Id$. A Hom-algebra $(A, \mu, \alpha)$ (resp. a Hom-coalgebra $(A, \Delta, \alpha)$) is said to be \emph{multiplicative}  if $\alpha \circ \mu = \mu \circ \alpha^{\otimes 2}$ (resp. $\alpha ^{\otimes 2}\circ \Delta = \Delta \circ \alpha$). The Hom-algebra is called \emph{commutative} if $\mu = \mu^{op}$ and the Hom-coalgebra is called \emph{cocommutative} if $\Delta = \Delta^{op}$.

Unless otherwise specified, we assume in this paper that the Hom-algebras and Hom-coalgebras are multiplicative.  Classical algebras or  coalgebras are also regarded as a Hom-algebras or Hom-coalgebras with identity twisting map. Given a Hom-algebra $(A, \mu, \alpha)$, we often use the abbreviation $\mu(x, y) = xy$ for $x, y \in A$. Likewise, for a Hom-coalgebra $(A,\Delta,\alpha)$, we will use Sweedler's notation $\Delta(x) = \sum_{(x)} x_1 \otimes x_2$ but often omit the symbol of summation. When the Hom-algebra (resp. Hom-coalgebra) is multiplicative, we also say that $\alpha$ is multiplicative for $\mu$ (resp.  $\Delta$). \\

In the following, we summarize the definitions and properties of usual Hom-structures, provide some examples and recall the twisting principle giving a way to turn a classical structure to Hom-structure using algebra morphisms.

\subsection{Hom-associative algebras and Hom-Lie algebras}
We recall the definitions of Hom-Lie algebras and Hom-associative algebras.
\begin{definition}
Let $(A, \mu, \alpha)$ be a Hom-algebra.
\begin{enumerate}
\item The \emph{Hom-associator} $\as_{\mu,\alpha} : A^{\otimes 3} \to A$ is defined as
\begin{equation}
\as_{\mu,\alpha} = \mu \circ (\mu \otimes \alpha - \alpha \otimes \mu).
\end{equation}
\item The Hom-algebra $A$ is called a \emph{Hom-associative algebra} if it satisfies the Hom-associative identity
\begin{equation}
\as_{\mu,\alpha} = 0.
\end{equation}
\item A Hom--associative algebra $A$ is called  \emph{unital} if there exists a linear map $\eta: \K\to A$  such that 
\begin{equation}
\mu\circ\left(  Id_{A}\otimes\eta\right)  =\mu\circ\left(  \eta\otimes
Id_{A}\right)  =\alpha.
\end{equation}
The unit element is $1_A=\eta(1)$.
\item The \emph{Hom-Jacobian} $J_{\mu,\alpha} : A^{\otimes 3} \to A$ is defined as
\begin{equation}
J_{\mu,\alpha} = \mu \circ (\alpha \otimes \mu) \circ (Id + \sigma + \sigma^2).
\end{equation}
\item The Hom-algebra $A$ is called a \emph{Hom-Lie algebra} if it satisfies $\mu + \mu^{op} = 0$ and the Hom-Jacobi identity
\begin{equation}
J_{\mu,\alpha} = 0.
\end{equation}
\end{enumerate}
\end{definition}
We recover the usual definitions of the associator, an associative algebra, the Jacobian, and a Lie algebra when the twisting map $\alpha$ is the identity map.
In terms of elements $x, y, z \in A$, the Hom-associator and the Hom-Jacobian are
\begin{align*}
\as_{\mu,\alpha}(x, y, z) &=\ (xy)\alpha(z) - \alpha(x)(yz), \\
J_{\mu,\alpha}(x, y, z) &=\ \circlearrowleft_{x,y,z}{[\alpha(x),[y,z]]},
\end{align*}
where the bracket denotes the product and  $\circlearrowleft_{x,y,z}$ denotes the cyclic sum  on $x,y,z$.

Let $\left( A,\mu,\alpha \right)$ and $A^{\prime} = \left( A^{\prime},\mu^{\prime}, \alpha^{\prime}\right)$ be two Hom-algebras (either Hom-associative or Hom-Lie). A linear map $f\ : A \rightarrow A^{\prime}$ is a \emph{morphism of Hom-algebras} if%
\[
\mu^{\prime} \circ (f \otimes f) = f \circ \mu \quad \text{ and } \qquad f \circ \alpha = \alpha^{\prime}\circ f.
\]

It is said to be a \emph{weak morphism} if holds only the first condition.

\begin{remark}
It was shown in \cite{MakSil-struct} that the commutator of a Hom-associative algebra leads to a Hom-Lie algebra. The construction of the enveloping algebras of Hom-Lie algebras is described in \cite{Yau:EnvLieAlg}.
\end{remark}

\begin{example}\label{example1ass}
Let $\{x_1,x_2,x_3\}$  be a basis of a $3$-dimensional vector space
$A$ over $\K$. The following multiplication $\mu$ and linear map
$\alpha$ on $A=\K^3$ define a Hom-associative algebra over $\K${\rm:}
\[
\begin{array}{ll}
  \begin{array}{lll}
  \mu ( x_1,x_1)&=& a x_1, \ \\
  \mu ( x_1,x_2)&=&\mu ( x_2,x_1)=a x_2,\\
  \mu ( x_1,x_3)&=&\mu ( x_3,x_1)=b x_3,\\
  \end{array}
 & \quad
  \begin{array}{lll}
  \mu ( x_2,x_2)&=& a x_2, \ \\
  \mu ( x_2, x_3)&=& b x_3, \ \\
  \mu ( x_3,x_2)&=& \mu ( x_3,x_3)=0,
  \end{array}
\end{array}
\]

\[
\alpha(x_1) = a x_1, \quad
\alpha(x_2) = a x_2, \quad
\alpha(x_3) = b x_3
\]
where $a,b$ are parameters in $\K$. This algebra is not associative when $a\neq b$ and $b\neq 0$, since
\[
\mu(\mu(x_1,x_1),x_3))- \mu(x_1,\mu(x_1,x_3))=(a-b)b x_3.
\]
\end{example}

\begin{example}[Jackson $\mathfrak{sl}_2$]\label{JacksonSL2}
The Jackson $\mathfrak{sl}_2$ is a $t$-deformation of the classical Lie algebra  $\mathfrak{sl}_2$. It carries a  Hom-Lie algebra structure but not a Lie algebra structure by using Jackson derivations. It is defined with respect to a basis $\{x_1,x_2,x_3\}$  by the brackets and a linear  map $\alpha$ such that
\[
\begin{array}{cc}
  \begin{array}{ccc}
  [ x_1, x_2 ] &= &2 x_2, \\ {}
  [x_1, x_3 ]&=& -2(1+t) x_3, \\ {}
  [ x_2,x_3 ] & = &  \left(1+\dfrac{t}{2}\right) x_1,
  \end{array}
 & \quad
  \begin{array}{ccc}
  \alpha (x_1)&=& x_1, \\
  \alpha (x_2)&=& \dfrac{2+t}{2(1+t)} x_2, \\
  \alpha (x_3)&=& \left(1+\dfrac{t}{2}\right) x_3,
  \end{array}
\end{array}
\]
where $t$ is a parameter in $\K$. If $t=0$ we recover the classical $\mathfrak{sl}_2$.
\end{example}

The following proposition gives an easy way to twist classical structures in Hom-structures.

\begin{theorem}[\cite{Yau:homology}] \hfill \label{twist-ass_Lie}
\begin{enumerate}[(1)]
\item Let $A = (A, \mu)$ be an associative algebra and $\alpha : A \to A$ be a linear map which is multiplicative with respect to $\mu$, i.e. $\alpha \circ \mu = \mu \circ \alpha^{\otimes 2}$. Then $A_\alpha = (A, \mu_\alpha = \alpha \circ \mu, \alpha)$ is a Hom-associative algebra.
\item Let $A = (A, [~,~])$ be a Lie algebra and $\alpha : A \to A$ be a linear map which is multiplicative with respect to $[~,~]$, i.e. $\alpha \circ [~,~] = [~,~] \circ \alpha^{\otimes 2}$. Then $A_\alpha = (A, [~,~]_\alpha = \alpha \circ [~,~], \alpha)$ is a Hom-Lie algebra.
\end{enumerate}
\end{theorem}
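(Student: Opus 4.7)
The plan for both parts is the same: substitute the twisted product $\mu_\alpha = \alpha\circ\mu$ (resp.\ bracket $[\,,\,]_\alpha = \alpha\circ[\,,\,]$) into the Hom-associator (resp.\ Hom-Jacobian), use multiplicativity of $\alpha$ to pull $\alpha$ past $\mu$, and then invoke the classical associativity (resp.\ Jacobi identity) of the underlying structure.

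For part (1), I would first check multiplicativity of $\alpha$ for $\mu_\alpha$, which is immediate: $\alpha\circ\mu_\alpha = \alpha\circ\alpha\circ\mu = \alpha\circ\mu\circ\alpha^{\otimes 2} = \mu_\alpha\circ\alpha^{\otimes 2}$. Then for $x,y,z\in A$,
\begin{align*}
\mu_\alpha\bigl(\mu_\alpha(x,y),\alpha(z)\bigr)
&=\alpha\bigl(\mu(\alpha(\mu(x,y)),\alpha(z))\bigr)
=\alpha\bigl(\mu(\mu(\alpha(x),\alpha(y)),\alpha(z))\bigr),\\
\mu_\alpha\bigl(\alpha(x),\mu_\alpha(y,z)\bigr)
&=\alpha\bigl(\mu(\alpha(x),\alpha(\mu(y,z)))\bigr)
=\alpha\bigl(\mu(\alpha(x),\mu(\alpha(y),\alpha(z)))\bigr),
\end{align*}
where each second equality uses multiplicativity $\alpha\circ\mu = \mu\circ\alpha^{\otimes 2}$. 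The two right-hand sides agree by associativity of $\mu$ applied to $\alpha(x),\alpha(y),\alpha(z)$, so $\as_{\mu_\alpha,\alpha}=0$.

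For part (2), skew-symmetry is transparent: $[x,y]_\alpha = \alpha([x,y]) = -\alpha([y,x]) = -[y,x]_\alpha$. For the Hom-Jacobi identity, the same pull-through of $\alpha$ gives, for each cyclic term,
\[
\bigl[\alpha(x),[y,z]_\alpha\bigr]_\alpha
=\alpha\bigl([\alpha(x),\alpha([y,z])]\bigr)
=\alpha\bigl([\alpha(x),[\alpha(y),\alpha(z)]]\bigr),
\]
using multiplicativity twice. Summing over the cyclic permutations of $x,y,z$, one obtains $\alpha$ applied to the classical Jacobiator of $\alpha(x),\alpha(y),\alpha(z)$, which vanishes. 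Thus $J_{[\,,\,]_\alpha,\alpha}=0$, and multiplicativity of $\alpha$ for $[\,,\,]_\alpha$ is verified as in part (1).

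There is no real obstacle: the proof is a direct computation whose only ingredient beyond the definitions is the compatibility $\alpha\circ\mu=\mu\circ\alpha^{\otimes 2}$ (resp.\ $\alpha\circ[\,,\,]=[\,,\,]\circ\alpha^{\otimes 2}$), which is precisely the hypothesis. The conceptual content is simply that the outer $\alpha$ in $\mu_\alpha$ can be commuted to the outside of any nested expression, reducing every Hom-identity to the classical identity evaluated on $\alpha$-twisted arguments.
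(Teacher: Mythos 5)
Your proposal is correct and follows essentially the same route as the paper: both reduce the Hom-associator (resp.\ Hom-Jacobian) of the twisted structure to the classical associator (resp.\ Jacobiator) by pulling $\alpha$ through $\mu$ via multiplicativity — the paper does this as a composition-of-maps identity $\as_{\mu_\alpha,\alpha}=\alpha^2\circ\as_{\mu,\mathrm{Id}}$, while you do the equivalent elementwise computation landing on the classical identity evaluated at $\alpha(x),\alpha(y),\alpha(z)$. No gap; your explicit checks of multiplicativity of $\alpha$ for $\mu_\alpha$ and of skew-symmetry are welcome additions that the paper leaves implicit.
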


\begin{proof}
\begin{enumerate}[(1)]
\item We have
\begin{equation*}
\begin{split}
\as_{\mu_\alpha,\alpha} &= \mu_\alpha \circ (\mu_\alpha \otimes \alpha - \alpha \otimes \mu_\alpha) \\
&= \alpha \circ \mu \circ \alpha^{\otimes 2} \circ (\mu \otimes Id - Id \otimes \mu) \\
&= \alpha^2 \circ \as_{\mu, Id} = 0
\end{split}
\end{equation*}
since $(A,\mu)$ is associative, so $A_\alpha$ is a Hom-associative algebra.
\item We have $\forall\ x,y \in A, \ [y,x] = -[x,y]$ and
\begin{equation*}
\begin{split}
J_{[~,~]_\alpha,\alpha} &= [~,~]_\alpha \circ (\alpha \otimes [~,~]_\alpha) \circ (Id + \sigma + \sigma^2) \\
&= \alpha \circ [~,~] \circ \alpha^{\otimes 2} \circ (Id \otimes [~,~]) \circ (Id + \sigma + \sigma^2) \\
&= \alpha^2 \circ J_{[~,~],Id} = 0
\end{split}
\end{equation*}
since $[~,~]$ is a Lie bracket, so $A_\alpha$ is a Hom-Lie algebra.
\end{enumerate}
\end{proof}

\begin{remark}
More generally, the categories of Hom-associative algebras and Hom-Lie algebras   are closed under twisting self-weak morphisms. If  $A= (A, \mu, \alpha)$ is a Hom-associative algebra  (resp. Hom-Lie algebra) and $\beta$ a weak morphism, then $A_\beta = (A, \mu_\beta = \beta \circ \mu, \beta\circ\alpha)$ is a Hom-associative algebra (resp. Hom-Lie algebra) as well (see \cite{Yau:Poisson}).
\end{remark}

\begin{example}
To twist the usual Lie algebra $\mathfrak{sl}_2$, generated by the elements $e,f,h$ and relations $[h,e]=2e, \ [h,f]=-2f, \ [e,f]=h$, we figure out morphisms $\alpha : \mathfrak{sl}_2 \to \mathfrak{sl}_2$ written as $\alpha = (\alpha_{ij})$ with respect to the basis $(e,f,h)$, by solving  the equations $\alpha([x,y]) = [\alpha(x),\alpha(y)]$ with respect to the  basis in the coefficients $\alpha_{ij}$. We obtain $({\mathfrak{sl}_2})_\alpha$, Hom-Lie versions of $\mathfrak{sl}_2$, with $\alpha$ given by
\begin{enumerate}
\item $\alpha(e) = \lambda e, \ \alpha(f) = -\lambda \mu^2 e + \lambda^{-1} f + \mu h, \ \alpha(h) = -2\lambda \mu e + h$,
\item $\alpha(e) = - \dfrac{\lambda \mu^2}{4} e + \lambda f + \dfrac{\lambda \mu}{2} h, \ \alpha(f) = \lambda^{-1} e, \ \alpha(h) =\mu e - h$,
\item
\(
\begin{aligned}[t]
\alpha(e) &= \frac{1}{4} \lambda (\mu + 1)^2 e -\frac{1}{4} \lambda \nu^2  f - \frac{1}{4} \lambda (\mu + 1) \nu h, \\
\alpha(f) &= \lambda^{-1} (\mu - 1)^2 \nu^{-2} e - \lambda^{-1} f + \lambda^{-1} (\mu - 1) \nu^{-1} h, \\
\alpha(h) &= (1-\mu^2) \nu^{-1} e + \nu f + \mu h,
\end{aligned}
\)
\end{enumerate}
where  $\lambda, \nu$ are nonzero elements in $\K$ and $\mu$ is an element in $\K$.
\end{example}

\subsection{Hom-coalgebras, Hom-bialgebras and Hom-Hopf algebras}
In this section we summarize and describe some of basic
properties of Hom-coalgebras, Hom-bialgebras and Hom-Hopf algebras
which generalize the classical coalgebra, bialgebra and Hopf algebra
structures.

\begin{definition}
A \emph{Hom-coalgebra} is a
triple $\left( A,\Delta ,\alpha \right) $ where $A$ is a $\K$-module and
$\Delta : A \rightarrow A \otimes A,\ \alpha : A \rightarrow A$ are linear maps.

A \emph{Hom-coassociative coalgebra} is a Hom-coalgebra satisfying
\begin{equation}\label{C1}
\left(\alpha \otimes \Delta \right) \circ \Delta = \left( \Delta \otimes \alpha\right) \circ \Delta.
\end{equation}
A Hom-coassociative coalgebra is said to be \emph{counital} if there exists a map $\varepsilon :A\rightarrow \K$ satisfying
\begin{equation}\label{C2}
\left( id \otimes \varepsilon \right) \circ \Delta = \alpha \qquad \text{ and }\qquad
\left( \varepsilon \otimes id\right) \circ \Delta = \alpha.
\end{equation}
\end{definition}

Let $\left( A,\Delta ,\alpha \right) $
and $\left( A^{\prime },\Delta^{\prime
},\alpha^{\prime }\right)
$ be two Hom-coalgebras (resp.  Hom-coassociative coalgebras). A
linear map  $f\ :A\rightarrow A^{\prime }$ is a \emph{morphism of  Hom-coalgebras} (resp. \emph{Hom-coassociative coalgebras}) if%
\[
(f \otimes f)\circ \Delta = \Delta^{\prime} \circ f
\quad \text{ and} \quad f \circ\alpha = \alpha^{\prime}\circ f.
\]
If furthermore the Hom-coassociative coalgebras admit counits $\varepsilon$ and $\varepsilon^{\prime}$, we have moreover $\varepsilon = \varepsilon^{\prime}\circ f $.

The following theorem shows how to construct a new Hom-coassociative Hom-coalgebra starting with a Hom-coassociative Hom-coalgebra and a Hom-coalgebra morphism. We only need the coassociative comultiplication of the coalgebra. 

\begin{theorem} \label{thmConstrHomCoalg}
Let $(A,\Delta,\alpha,\varepsilon)$ be a counital Hom-coalgebra  and $\beta : A\rightarrow A$ be a weak Hom-coalgebra morphism. Then $(A,\Delta_\beta,\alpha \circ \beta,\varepsilon)$, where $\Delta_\beta = \Delta \circ \beta$,  is a counital Hom-coassociative coalgebra.
\end{theorem}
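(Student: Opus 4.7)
The plan is to verify the two defining conditions of a counital Hom-coassociative coalgebra, namely coassociativity with respect to the new twist $\alpha \circ \beta$ and the counit axiom for $\varepsilon$, by repeatedly sliding $\beta$ past $\Delta$ via the weak morphism identity
\[
(\beta \otimes \beta) \circ \Delta \;=\; \Delta \circ \beta,
\]
and then invoking coassociativity of the original $\Delta$. Since no compatibility between $\beta$ and $\alpha$ is assumed (that is exactly the weakness of the morphism), one only needs to keep $\alpha$ in its original position and never commute it with $\beta$ or $\Delta$.

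For coassociativity, I would compute both sides of
\[
(\Delta_\beta \otimes (\alpha\circ\beta)) \circ \Delta_\beta
\quad\text{vs.}\quad
((\alpha\circ\beta)\otimes \Delta_\beta) \circ \Delta_\beta.
\]
Starting from the left: $(\Delta_\beta \otimes \alpha\circ\beta) \circ \Delta_\beta = (\Delta \otimes \alpha) \circ (\beta\otimes\beta) \circ \Delta \circ \beta$; using the weak morphism property to replace $(\beta\otimes\beta)\circ\Delta$ by $\Delta\circ\beta$ produces $(\Delta\otimes\alpha)\circ\Delta\circ\beta^{2}$; then coassociativity of $\Delta$ turns this into $(\alpha\otimes\Delta)\circ\Delta\circ\beta^{2}$; and finally the weak morphism property is used in reverse to rewrite $\Delta\circ\beta$ inside as $(\beta\otimes\beta)\circ\Delta$, which after regrouping yields $((\alpha\circ\beta)\otimes \Delta_\beta)\circ \Delta_\beta$. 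This is the only computation of substance in the proof.

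For the counit, the argument is a one-line rewrite: the counit condition (\ref{C2}) for the original coalgebra says $(\mathrm{id}\otimes \varepsilon)\circ\Delta = \alpha = (\varepsilon\otimes \mathrm{id})\circ\Delta$; post-composing each side with $\beta$ gives
\[
(\mathrm{id}\otimes\varepsilon)\circ\Delta_\beta
 = (\mathrm{id}\otimes\varepsilon)\circ\Delta\circ\beta
 = \alpha\circ\beta,
\]
and similarly for $(\varepsilon\otimes\mathrm{id})\circ\Delta_\beta$, which is precisely the counit condition for the twisted structure $(A,\Delta_\beta,\alpha\circ\beta,\varepsilon)$.

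The only mild obstacle is the bookkeeping in the coassociativity step: one must be careful to apply the weak morphism identity in the correct tensor factors (first rewriting $\Delta_\beta$ as $(\beta\otimes\beta)\circ\Delta$ to expose the inner $\Delta$, then applying coassociativity, then undoing the rewrite on the other side). Since $\beta$ does not intertwine $\alpha$, the twist $\alpha$ is simply carried along and never interacts with $\beta$, which is why the hypothesis of a weak (rather than strict) morphism suffices.
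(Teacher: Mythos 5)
Your proof is correct and follows essentially the same route as the paper: rewrite $\Delta_\beta\otimes(\alpha\circ\beta)$ as $(\Delta\otimes\alpha)\circ(\beta\otimes\beta)$, use the weak morphism identity $(\beta\otimes\beta)\circ\Delta=\Delta\circ\beta$ to push both copies of $\beta$ to the right, apply the Hom-coassociativity axiom \eqref{C1} of the original structure, and undo the rewrite; the counit check is the same one-line post-composition with $\beta$. Your observation that $\alpha$ never needs to interact with $\beta$, so a weak morphism suffices, matches the paper exactly.
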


\begin{proof}
We show that $(A,\Delta_\beta,\alpha\circ\beta)$ satisfies the axiom \eqref{C1}.

Indeed, using the fact that $(\beta \otimes \beta) \circ \Delta = \Delta \circ \beta$, we have
\begin{align*}
\left( \alpha\circ\beta\otimes \Delta_\beta \right)
\circ \Delta_\beta &= \left(\alpha\circ\beta \otimes \Delta\circ\beta\right) \circ \Delta\circ\beta \\
&=(\left( \alpha \otimes
\Delta\right) \circ\Delta)\circ \beta^2\\
&=(\left( \Delta\otimes\alpha
\right) \circ\Delta)\circ \beta^2\\
&=\left(\Delta_\beta \otimes\alpha\circ\beta
\right) \circ \Delta_\beta.
\end{align*}
Moreover, the axiom \eqref{C2} is also satisfied, since we have
\begin{equation*}
\left( id \otimes \varepsilon \right) \circ \Delta_\beta = \left( id \otimes \varepsilon \right) \circ \Delta \circ \beta = \alpha \circ \beta = \left( \varepsilon \otimes id\right) \circ \Delta \circ \beta = \left( \varepsilon \otimes id\right) \circ \Delta_\beta.
\end{equation*}
\end{proof}

\begin{remark}
The previous theorem shows that the category of coassociative Hom-coalgebras is closed under weak Hom-coalgebra morphisms. It leads to the following examples:
\begin{enumerate}
\item Let $(A,\Delta)$ be a coassociative coalgebra and $\beta : A \to A$ be a coalgebra morphism. Then $(A,\Delta_\beta,\beta)$ is a Hom-coassociative coalgebra.
\item Let $(A,\Delta,\alpha)$ be a (multiplicative) coassociative Hom-coalgebra. For all non negative integer $n$,  $(A,\Delta_{\alpha^n}, \alpha^{n+1})$ is a Hom-coassociative coalgebra.
\end{enumerate}

\end{remark}

In the following we show that there is a duality between  Hom-associative and the Hom-coassociative structures (see \cite{HomHopf,HomAlgHomCoalg}).

\begin{theorem}
Let $(A,\Delta,\alpha)$ be a Hom-coalgebra. Then $(A^*,\Delta^*,\alpha^*)$ is an Hom-algebra, which is Hom-associative if and only if $A$ is Hom-coassociative, and unital if and only if $A$ is counital.
\end{theorem}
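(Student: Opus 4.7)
The plan is to prove both equivalences by dualizing the defining diagrammatic identities. First I would set up the dual structure explicitly: define the multiplication on $A^*$ by $\mu(f,g)(x) = (f \otimes g)(\Delta(x)) = \sum_{(x)} f(x_1) g(x_2)$, which is the restriction of $\Delta^*$ to $A^* \otimes A^* \hookrightarrow (A \otimes A)^*$, and define the twist by $\alpha^*(f)(x) = f(\alpha(x))$. This already gives a Hom-algebra $(A^*, \Delta^*, \alpha^*)$ in the sense of the paper, independent of any further axiom.

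For the Hom-associative equivalence, the strategy is to compute the Hom-associator and recognize it as the transpose of the coassociator. Concretely, for $f,g,h \in A^*$ and $x \in A$ one unfolds
\begin{equation*}
\as_{\mu,\alpha^*}(f,g,h)(x) = (f \otimes g \otimes h)\bigl(\, ((\Delta \otimes \alpha) \circ \Delta - (\alpha \otimes \Delta) \circ \Delta)(x)\,\bigr),
\end{equation*}
which shows that the Hom-associator of $A^*$ is the dual of the difference $(\Delta \otimes \alpha)\circ\Delta - (\alpha \otimes \Delta)\circ\Delta$. Thus $\as_{\mu,\alpha^*} = 0$ iff the latter expression vanishes (using that linear functionals separate points of $A$ and of $A^{\otimes 3}$ when we work with the finite dual), which is precisely the Hom-coassociativity axiom \eqref{C1}.

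For the unital/counital equivalence, if $\varepsilon$ is a counit for $A$, define $\eta : \K \to A^*$ by $\eta(1) = \varepsilon$. Then
\begin{equation*}
\mu(\eta(1), f)(x) = \sum_{(x)} \varepsilon(x_1)\, f(x_2) = f\bigl((\varepsilon \otimes \mathrm{id})\Delta(x)\bigr) = f(\alpha(x)) = \alpha^*(f)(x),
\end{equation*}
so $\mu \circ (\eta \otimes \mathrm{Id}) = \alpha^*$, and symmetrically $\mu \circ (\mathrm{Id} \otimes \eta) = \alpha^*$; this is the Hom-algebra unit condition. Conversely, given a unit $\eta$, set $\varepsilon = \eta(1) \in A^*$ and reverse the computation to recover \eqref{C2}.

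The main obstacle, rather than the algebraic manipulation (which is a formal transposition of diagrams), is the functional-analytic issue that in infinite dimensions the canonical map $A^* \otimes A^* \to (A \otimes A)^*$ is only an inclusion, so $\Delta^*$ need not land in $A^* \otimes A^*$. To make the statement rigorous one must pass to the finite (restricted) dual $A^\circ$ as in the classical Sweedler construction already alluded to in the preceding subsection; once this restriction is made, the separating property of functionals ensures that the dualized identities are genuine equivalences rather than one-way implications.
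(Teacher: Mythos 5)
Your proof is correct and follows essentially the same route as the paper: define $\mu = \Delta^*$ restricted to $A^* \otimes A^* \subset (A\otimes A)^*$, identify the Hom-associator of $A^*$ with the transpose of $(\Delta\otimes\alpha)\circ\Delta - (\alpha\otimes\Delta)\circ\Delta$, and take $\eta(1)=\varepsilon$ for the unit. One correction to your closing paragraph: the finite dual $A^\circ$ is \emph{not} needed here --- the composite $A^*\otimes A^* \hookrightarrow (A\otimes A)^* \xrightarrow{\Delta^*} A^*$ automatically lands in $A^*$, so dualizing a Hom-coalgebra always yields a Hom-algebra on the full dual; the passage to $A^\circ$ is only required for the reverse construction (Hom-algebra to Hom-coalgebra), which is the following theorem in the paper. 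The separation argument you invoke for the ``only if'' directions also works over the full dual, since decomposable functionals $f\otimes g\otimes h$ with $f,g,h\in A^*$ already separate points of $A^{\otimes 3}$ over a field.
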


\begin{proof}
The product $\mu = \Delta^*$ is defined from $A^* \otimes A^*$ to $A^*$ by
\begin{equation*}
(fg)(x) = \Delta^*(f,g)(x) = \langle \Delta(x),f \otimes g \rangle = (f \otimes g)(\Delta(x)) = \sum_{(x)} f(x_1)g(x_2), \quad \forall x \in A
\end{equation*}
where $\langle \cdot,\cdot \rangle$ is the natural pairing between the vector space $A \otimes A$ and its dual vector space. For $f,g,h \in A^*$ and $x \in A$, we have
\begin{gather*}
(fg) \alpha^*(h)(x) = \langle (\Delta \otimes \alpha) \circ \Delta(x),f \otimes g \otimes h \rangle
\shortintertext{and}
\alpha^*(f)(gh)(x) = \langle (\alpha \otimes \Delta) \circ \Delta(x),f \otimes g \otimes h \rangle
\end{gather*}
so the Hom-associativity $\mu \circ (\mu \otimes \alpha^* - \alpha^* \otimes \mu) = 0$ is equivalent to the Hom-coassociativity $(\Delta \otimes \alpha - \alpha \otimes \Delta) \circ \Delta = 0$. \\
Moreover, if $A$ has a counit $\varepsilon$ satisfying $(id \otimes \varepsilon) \circ \Delta = \alpha = (\varepsilon \otimes id) \circ \Delta$ then for $f \in A^*$ and $x \in A$ we have
\begin{gather*}
(\varepsilon f)(x) = \sum_{(x)} \varepsilon(x_1) f(x_2) = \sum_{(x)} f(\varepsilon(x_1) x_2) = f(\alpha(x)) = \alpha^*(f)(x) \\
\shortintertext{and}
(f \varepsilon)(x) = \sum_{(x)} f(x_1) \varepsilon(x_2) = \sum_{(x)} f(x_1 \varepsilon(x_2)) = f(\alpha(x)) = \alpha^*(f)(x)
\end{gather*}
which shows that $\varepsilon$ is the unit in $A^*$.
\end{proof}

The dual of a Hom-algebra $(A,\mu,\alpha)$ is not always a Hom-coalgebra, because the coproduct does not land in the good space: $\mu^* : A^* \to (A \otimes A)^* \supsetneq A^* \otimes A^*$. It is the case if the Hom-algebra is of finite dimension, since $(A\otimes A)^* = A^* \otimes A^*$.

In the general case, for any algebra $A$, define
\begin{equation*}
A^\circ = \{f \in A^*, f(I)=0\ \text{for some cofinite ideal $I$ of $A$}\},
\end{equation*}
where a \emph{cofinite ideal} $I$ is an ideal $I \subset A$ such that $A/I$ is finite dimensional.

$A^\circ$ is a subspace of $A^*$ since it is closed under multiplication by scalars and the sum of two elements of $A^\circ$ is again in $A^\circ$ since the intersection of two cofinite ideals is again such. If $A$ is finite dimensional, of course $A^\circ = A^*$.

For a map $f : A \to B$ we note $f^\circ := f^*|_{B\circ} : B^\circ \to A^\circ$. Since $A^\circ \otimes A^\circ = (A \otimes A)^\circ$ (see \cite[Lemma 6.0.1]{Sweedler}) the dual $\mu^* : A^* \to (A \otimes A)^*$ of the multiplication $\mu : A \otimes A$ satisfies $\mu^*(A^\circ)  \subset A^\circ \otimes A^\circ$. Indeed, for $f \in A^*,\ x,y \in A$, we have $\langle \mu^*(f),x \otimes y \rangle = \langle f,xy \rangle$. So if $I$ is a cofinite ideal such that $f(I) = 0$, then $I \otimes A + A \otimes I$ is a cofinite ideal of $A \otimes A$ which vanish on $\mu^*(f)$.

Define $\Delta = \mu^\circ = \mu^*|_{A^\circ}$ and $\varepsilon : A^\circ \to \K$ by $\varepsilon(f) = f(1)$.

\begin{theorem}
Let $(A,\mu,\alpha)$ be a Hom-algebra. Then $(A^\circ,\Delta,\alpha^\circ)$ is an Hom-coalgebra, which is Hom-coassociative if and only if $A$ is Hom-associative, and counital if and only if $A$ is unital.
\end{theorem}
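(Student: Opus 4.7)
The plan is to dualise the proof of the previous theorem, with the extra technical point that every structure map must now be checked to preserve the restricted dual $A^{\circ}$.

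First I would verify that $\Delta$ and $\alpha^{\circ}$ land in the right places. The cofinite-ideal argument given in the paragraph just before the theorem already shows that $\mu^{*}(A^{\circ}) \subset A^{\circ} \otimes A^{\circ}$, using the identification $A^{\circ} \otimes A^{\circ} = (A \otimes A)^{\circ}$ from \cite[Lemma 6.0.1]{Sweedler}. For $\alpha^{\circ}$, multiplicativity of $\alpha$ ($\alpha \circ \mu = \mu \circ \alpha^{\otimes 2}$) implies that $\alpha^{-1}(I)$ is an ideal of $A$ whenever $I$ is, and the map $A/\alpha^{-1}(I) \hookrightarrow A/I$ induced by $\alpha$ is injective, so $\alpha^{-1}(I)$ is again cofinite. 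Hence if $f \in A^{\circ}$ vanishes on a cofinite ideal $I$, then $\alpha^{*}(f) = f \circ \alpha$ vanishes on $\alpha^{-1}(I)$ and lies in $A^{\circ}$.

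Second, I would translate Hom-coassociativity on $A^{\circ}$ into the Hom-associative identity on $A$ by pairing both sides against elementary tensors. Writing $\Delta(f) = \sum f_{1} \otimes f_{2}$ with $f(xy) = \sum f_{1}(x) f_{2}(y)$, a direct computation yields, for $f \in A^{\circ}$ and $x,y,z \in A$,
\begin{align*}
\bigl\langle (\Delta \otimes \alpha^{\circ}) \circ \Delta(f),\; x \otimes y \otimes z \bigr\rangle &= f\bigl((xy)\alpha(z)\bigr), \\
\bigl\langle (\alpha^{\circ} \otimes \Delta) \circ \Delta(f),\; x \otimes y \otimes z \bigr\rangle &= f\bigl(\alpha(x)(yz)\bigr).
\end{align*}
These two values agree for every $f \in A^{\circ}$ and every $x,y,z$ iff $f(\as_{\mu,\alpha}(x,y,z)) = 0$ for all such data. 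The ``if'' direction is then immediate from $\as_{\mu,\alpha}=0$; for ``only if'' I would invoke the standard fact that $A^{\circ}$ separates the points of $A$, which follows from the abundance of finite-dimensional quotient representations of $A$.

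Third, for the counit assertion I would set $\varepsilon(f) := f(1_{A})$ when $A$ is unital. Using the Hom-unit axiom $\mu \circ (\eta \otimes \mathrm{id}) = \mu \circ (\mathrm{id} \otimes \eta) = \alpha$, one computes
\[
\bigl((\varepsilon \otimes \mathrm{id}) \circ \Delta(f)\bigr)(x) = \sum f_{1}(1_{A}) f_{2}(x) = f(1_{A} \cdot x) = f(\alpha(x)) = \alpha^{\circ}(f)(x),
\]
and symmetrically for $(\mathrm{id} \otimes \varepsilon)$, exactly mirroring the preceding theorem. The converse uses once more that $A^{\circ}$ separates points of $A$: a counit on $A^{\circ}$ forces the existence of an element $1_A\in A$ satisfying the required Hom-unit identity. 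The main obstacle is therefore not the algebraic calculation, which runs in perfect parallel with the previous duality theorem, but the careful bookkeeping that each structure map genuinely lands in $A^{\circ}$ (where multiplicativity of $\alpha$ and the Sweedler lemma are crucial) and the justification of the converse directions via point separation by $A^{\circ}$.
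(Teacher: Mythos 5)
Your computations reproduce the paper's own argument: the coproduct is $\Delta(f)(x\otimes y)=f(xy)$, pairing $(\Delta\otimes\alpha^{\circ})\circ\Delta(f)$ and $(\alpha^{\circ}\otimes\Delta)\circ\Delta(f)$ against $x\otimes y\otimes z$ gives $f\bigl((xy)\alpha(z)\bigr)$ and $f\bigl(\alpha(x)(yz)\bigr)$ respectively, and the counit computation $f(1\cdot x)=f(\alpha(x))=\alpha^{\circ}(f)(x)$ is exactly the one printed. Your first step is in fact an improvement on the printed proof: the paper simply writes $\alpha^{\circ}:=\alpha^{*}|_{A^{\circ}}:A^{\circ}\to A^{\circ}$ without checking that $\alpha^{*}$ preserves $A^{\circ}$, whereas your observation that $\alpha^{-1}(I)$ is again a cofinite ideal (using multiplicativity of $\alpha$ and the injection $A/\alpha^{-1}(I)\hookrightarrow A/I$) supplies the missing bookkeeping.

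The genuine gap is in your converse directions. It is not a standard fact that $A^{\circ}$ separates the points of $A$, and it is false in general: an infinite-dimensional algebra may have no proper cofinite ideals, in which case $A^{\circ}=\{0\}$ and $(A^{\circ},\Delta,\alpha^{\circ})$ is vacuously Hom-coassociative and counital whatever $A$ is. So the implication that Hom-coassociativity of $A^{\circ}$ forces $\as_{\mu,\alpha}=0$ on $A$ cannot be obtained from the separation argument you invoke, and the same objection applies to extracting a unit of $A$ from a counit of $A^{\circ}$. To be fair, the paper's proof is equally silent here --- it computes only the pairings and then asserts the equivalence --- so the statement as printed really needs either a hypothesis guaranteeing that $A^{\circ}$ separates points (for instance $A$ residually finite-dimensional, or finite-dimensional so that $A^{\circ}=A^{*}$) or a weakening of ``if and only if'' to ``if''. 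The forward implications, which carry the substantive content, are correct in your write-up and run parallel to the paper's.
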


\begin{proof}
The coproduct $\Delta$ is defined from $A^\circ$ to $A^\circ \otimes A^\circ$ by
\begin{equation*}
\Delta(f)(x \otimes y) = \mu^*|_{A^\circ}(f)(x \otimes y) = \langle \mu(x \otimes y),f \rangle = f(xy), \quad x,y \in A.
\end{equation*}
For $f,g,h \in A^\circ$ and $x,y \in A$, we have
\begin{gather*}
(\Delta \circ \alpha^\circ) \circ \Delta(f)(x \otimes y \otimes z) = \langle \mu \circ (\mu \otimes \alpha) (x \otimes y \otimes z),f \rangle
\shortintertext{and}
(\alpha^\circ \circ \Delta) \circ \Delta(f)(x \otimes y \otimes z) = \langle \mu \circ (\otimes \mu\alpha) (x \otimes y \otimes z),f \rangle
\end{gather*}
so the Hom-coassociativity $(\Delta \otimes \alpha^\circ - \alpha^\circ \otimes \Delta) \circ \Delta = 0$  is equivalent to the Hom-associativity $\mu \circ (\mu \otimes \alpha - \alpha \otimes \mu) = 0$. \\
Moreover, if $A$ has a unit $\eta$ satisfying $\mu \circ (id \otimes \eta) = \alpha = \mu \circ (\eta \otimes id)$ then for $f \in A^\circ$ and $x \in A$ we have
\begin{gather*}
(\varepsilon \otimes id) \circ \Delta(f)(x) = f(1.x) = f(\alpha(x)) = \alpha^\circ(f)(x) \\
\intertext{and}
(id \otimes \varepsilon) \circ \Delta(f)(x) = f(x.1) = f(\alpha(x)) = \alpha^\circ(f)(x)
\end{gather*}
which shows that $\varepsilon : A^\circ \to \K$, $f \mapsto f(1)$ is the counit in $A^\circ$.
\end{proof}

\begin{definition}
A \emph{Hom-bialgebra} is a tuple $\left( A,\mu ,\alpha,\eta ,\Delta ,\beta,\varepsilon \right)$ where
\begin{enumerate}
\item $\left( A,\mu ,\alpha,\eta \right)$
is a Hom-associative algebra with a unit  $\eta$.
\item  $\left( A,\Delta ,\beta,\varepsilon \right)$ is a Hom-coassociative coalgebra with a counit $\varepsilon$.
\item  The linear maps $\Delta $ and $\varepsilon $ are compatible with the multiplication  $\mu$ and the unit $\eta$, that is
\begin{eqnarray}
&& \Delta \left(e\right) = e \otimes e \qquad
\text{where}\ e=\eta\left( 1\right), \\
&& \Delta \left( \mu(x\otimes y)\right) = \Delta \left( x \right) \bullet \Delta \left( y\right)
=\sum_{(x)(y)}\mu(x_1 \otimes y_1) \otimes \mu( x_2 \otimes y_2), \\
&& \varepsilon \left( e\right) =1, \\
&& \varepsilon \left( \mu(x\otimes y)\right)
=\varepsilon \left(x\right) \varepsilon \left( y\right), \\
&& \varepsilon\circ \alpha \left( x\right)
=\varepsilon \left( x\right),
\end{eqnarray}
\end{enumerate}
where the bullet $\bullet$ denotes the
multiplication on tensor product.

If $\alpha = \beta$ the Hom-bialgebra is noted $(A,\mu,\eta,\Delta,\varepsilon,\alpha)$.
\end{definition}

Combining  previous observations, with only one twisting map, we obtain:
\begin{proposition}
Let  $\left( A,\mu, \Delta ,\alpha \right) $ be a Hom-bialgebra. Then the finite dual $\left( A^\circ ,\mu^\circ,  \Delta^\circ, \alpha^\circ  \right)$ is a Hom-bialgebra as well.
\end{proposition}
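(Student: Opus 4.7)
The plan is to combine the two preceding dualization theorems to produce compatible Hom-algebra and Hom-coalgebra structures on $A^\circ$, and then dualize the five Hom-bialgebra axioms of $A$. Applying the theorem on finite duals of Hom-algebras to $(A,\mu,\alpha)$ yields the counital Hom-coassociative coalgebra $(A^\circ,\mu^\circ,\alpha^\circ)$ with counit $f\mapsto f(1_A)$. Applying the theorem on duals of Hom-coalgebras to $(A,\Delta,\alpha)$ yields a unital Hom-associative product $\Delta^*$ on $A^*$ with unit $\varepsilon$. The substantive technical step — and the main obstacle — is to show that this product restricts to $A^\circ\otimes A^\circ\to A^\circ$, so as to define $\Delta^\circ$.

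For this restriction I would exploit that the bialgebra compatibilities $\Delta(e)=e\otimes e$ and $\Delta(\mu(x\otimes y))=\Delta(x)\bullet\Delta(y)$ promote $\Delta$ to an algebra morphism $\Delta\colon A\to(A\otimes A,\bullet)$. For $f,g\in A^\circ$ vanishing respectively on cofinite ideals $I_f,I_g\subset A$, the subspace $I_f\otimes A+A\otimes I_g$ is a cofinite two-sided ideal of $(A\otimes A,\bullet)$, so $J:=\Delta^{-1}(I_f\otimes A+A\otimes I_g)$ is a cofinite ideal of $A$ (since $A/J$ embeds in $A/I_f\otimes A/I_g$), and $(fg)(J)=(f\otimes g)(\Delta(J))=0$. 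Invoking Sweedler's identification $A^\circ\otimes A^\circ=(A\otimes A)^\circ$, we conclude $fg=\Delta^*(f\otimes g)\in A^\circ$. An analogous and easier argument using the cofinite ideal $\alpha^{-1}(I_f)$ and the multiplicativity of $\alpha$ confirms $\alpha^\circ(A^\circ)\subset A^\circ$.

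It then remains to verify the five Hom-bialgebra compatibilities for $(A^\circ,\Delta^\circ,\mu^\circ,\alpha^\circ)$. Each is checked by pairing against an element of $A$ and is the exact dual of the corresponding compatibility in $A$: $\mu^\circ(\varepsilon)=\varepsilon\otimes\varepsilon$ dualizes the multiplicativity of $\varepsilon$; the multiplicativity of $\mu^\circ$ under $\Delta^\circ$ dualizes the multiplicativity of $\Delta$; the counit-of-unit and counit-multiplicativity on $A^\circ$ dualize $\varepsilon(1_A)=1$ and $\Delta(1_A)=1_A\otimes 1_A$ respectively; and $\varepsilon_{A^\circ}\circ\alpha^\circ=\varepsilon_{A^\circ}$ reduces to $\alpha(1_A)=1_A$, which itself follows by applying the counit axiom $(Id\otimes\varepsilon)\circ\Delta=\alpha$ to $1_A$ together with $\Delta(1_A)=1_A\otimes 1_A$ and $\varepsilon(1_A)=1$. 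Once the descent of $\Delta^*$ to $A^\circ$ is established, all these checks are routine duality computations.
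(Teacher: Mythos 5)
Your proposal is correct and follows the same route the paper intends: the paper states this proposition without proof, as an immediate combination of the two preceding dualization theorems for Hom-(co)algebras. Your write-up in fact supplies the one genuinely nontrivial point the paper leaves implicit — that the bialgebra compatibility makes $\Delta$ an algebra map into $(A\otimes A,\bullet)$, so that $\Delta^*$ restricted to $A^\circ$ lands in $A^\circ\otimes A^\circ=(A\otimes A)^\circ$ — and the remaining compatibility checks are routine dualities as you say.
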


\begin{remark} \hfill
\begin{enumerate}
\item Given a Hom-bialgebra $\left( A,\mu ,\alpha,\eta ,\Delta,\beta ,\varepsilon \right)$, it is  shown in \cite{HomHopf,HomAlgHomCoalg} that the vector space $Hom \left( A,A \right)$ with the multiplication given
by the convolution product carries a structure of Hom-associative algebra.
\item An endomorphism $S$ of $A$ is said to be
an \emph{antipode} if it is  the inverse of the identity over $A$ for the Hom-associative algebra $Hom \left(A,A \right)$ with the multiplication given by the convolution product defined by
\[
f \star g= \mu \circ \left( f \otimes g \right) \Delta
\]
and the unit being $\eta \circ \varepsilon$.
\item A \emph{Hom-Hopf algebra} is a Hom-bialgebra with an antipode.
\end{enumerate}
\end{remark}

By combining Theorems \ref{twist-ass_Lie} and  \ref{thmConstrHomCoalg}, we obtain:
\begin{proposition}
Let $(A,\mu,\Delta,\alpha)$ be a Hom-bialgebra  and $\beta : A\rightarrow A$ be a Hom-bialgebra morphism commuting with $\alpha$. Then $(A,\mu_\beta,\Delta_\beta,\beta\circ \alpha)$ is a Hom-bialgebra.
\end{proposition}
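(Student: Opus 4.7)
The plan is to assemble the proof from the two twisting constructions already available in the excerpt, namely the remark following Theorem~\ref{twist-ass_Lie} (which twists Hom-associative products by a weak morphism) and Theorem~\ref{thmConstrHomCoalg} (which does the same for coassociative comultiplications), and then to check by direct calculation that the bialgebra compatibilities survive the twist because $\beta$ is a genuine Hom-bialgebra morphism.

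First, I would note that a Hom-bialgebra morphism $\beta$ is in particular a weak morphism both for $\mu$ and for $\Delta$. Applying the cited remark to $(A,\mu,\alpha)$ yields a Hom-associative structure $(A,\mu_\beta,\beta\circ\alpha)$; applying Theorem~\ref{thmConstrHomCoalg} to $(A,\Delta,\alpha,\varepsilon)$ yields a counital Hom-coassociative coalgebra $(A,\Delta_\beta,\alpha\circ\beta,\varepsilon)$. Since $\alpha$ and $\beta$ commute by hypothesis, the twisting maps agree, so the two structures live on the common Hom-module $(A,\beta\circ\alpha)$, which is what the notion of Hom-bialgebra requires.

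Next I would verify the five compatibility axioms. The unit equation $\Delta_\beta(e)=e\otimes e$ follows from $\beta(e)=e$ and the original compatibility $\Delta(e)=e\otimes e$. The counit conditions $\varepsilon(e)=1$, $\varepsilon\circ\mu_\beta=\varepsilon\otimes\varepsilon$ and $\varepsilon\circ(\beta\circ\alpha)=\varepsilon$ reduce to their untwisted analogues together with the identities $\varepsilon\circ\beta=\varepsilon$ and $\varepsilon\circ\alpha=\varepsilon$. The main computation is the multiplicativity of $\Delta_\beta$ with respect to $\mu_\beta$: using $\Delta\circ\beta=(\beta\otimes\beta)\circ\Delta$ iterated twice and $\beta\circ\mu=\mu\circ(\beta\otimes\beta)$, one obtains
\begin{equation*}
\Delta_\beta(\mu_\beta(x\otimes y))=\Delta(\beta^2(xy))=\sum_{(x)(y)}\beta^2(x_1y_1)\otimes\beta^2(x_2y_2),
\end{equation*}
while expanding the right-hand side gives
\begin{equation*}
\Delta_\beta(x)\bullet\Delta_\beta(y)=\sum_{(x)(y)}\mu_\beta(\beta(x_1)\otimes\beta(y_1))\otimes\mu_\beta(\beta(x_2)\otimes\beta(y_2))=\sum_{(x)(y)}\beta^2(x_1y_1)\otimes\beta^2(x_2y_2),
\end{equation*}
so the two sides coincide.

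I expect no real obstacle: once the two twisting theorems are invoked and the commutation $\alpha\beta=\beta\alpha$ is used to identify the twisting maps, every compatibility is a routine bookkeeping exercise powered by the fact that $\beta$ is simultaneously a Hom-algebra morphism, a Hom-coalgebra morphism and preserves unit and counit. The only mild subtlety is to keep track of the extra factor $\beta$ that appears once on each side of the multiplicativity identity, which is precisely what forces $\beta$ to commute with $\alpha$ so that the twist $\beta\circ\alpha$ is well-defined as a single map on both the algebra and coalgebra sides.
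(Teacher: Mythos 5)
Your proposal is correct and follows exactly the route the paper intends: the paper simply states that the result follows ``by combining Theorems~\ref{twist-ass_Lie} and~\ref{thmConstrHomCoalg},'' and your argument is that combination, supplemented by the routine verification of the five compatibility axioms (which the paper omits). Your computation of $\Delta_\beta\circ\mu_\beta=\Delta_\beta(\cdot)\bullet\Delta_\beta(\cdot)$ and your use of $\alpha\beta=\beta\alpha$ to reconcile the twisting maps $\beta\circ\alpha$ and $\alpha\circ\beta$ are both sound.
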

Notice that the category of Hom-bialgebra is not closed under weak Hom-bialgebra morphisms.

\begin{example}[Universal enveloping Hom-algebra]
Given a multiplicative Hom-associative algebra $A = (A,\mu,\alpha)$, one can associate to it a multiplicative Hom-Lie algebra $HLie(A) = (A,[~,~],\alpha)$ with the same underlying module $(A,\alpha)$ and the bracket $[~,~] = \mu - \mu^{op}$.
This construction gives a functor $HLie$ from multiplicative Hom-associative algebras to multiplicative Hom-Lie algebras. In \cite{Yau:EnvLieAlg}, Yau constructed the left adjoint $U_{HLie}$ of $HLie$. He also made some minor modifications in \cite{Yau:comodule} to take into account the unital case.

The functor $U_{HLie}$ is defined as
\begin{equation}
U_{HLie}(A) = F_{HNAs}(A)/I^\infty \qquad \text{with} \quad F_{HNAs}(A) = \bigoplus_{n \geqslant 1} \bigoplus_{\tau \in T_n^{wt}} A_{\tau}^{\otimes n}
\end{equation}
for a multiplicative Hom-Lie algebra $(A,[~,~],\alpha)$. Here $T_n^{wt}$ is the set of weighted $n$-trees encoding the multiplication of elements (by trees) and twisting by $\alpha$ (by weights), $A_{\tau}^{\otimes n}$ is a copy of $A^{\otimes n}$ and $I^\infty$ is a certain submodule of relations build in such a way that the quotient is Hom-associative.

Moreover, the comultiplication $\Delta : U_{HLie}(A) \to U_{HLie}(A)~\otimes~U_{HLie}(A)$ defined by $\Delta(x) = \alpha(x) \otimes 1 + 1 \otimes \alpha(x)$ equips the multiplicative Hom-associative algebra $U_{HLie}(A)$ with a structure of Hom-bialgebra.
\end{example}

\subsection{Hom-Lie coalgebras and Hom-Lie bialgebras}

As it is the case for the Hom-associative algebras, the Hom-Lie algebras also have a dualized version, Hom-Lie coalgebras. They share the same kind of properties. We review here the principal results, similar results can be found in \cite{Yau:ClassicYangBaxter}.

\begin{definition}
A \emph{Hom-Lie coalgebra} $(A,\Delta,\alpha)$ is a (multiplicative) Hom-coalgebra satisfying $\Delta + \Delta^{op} = 0$ and the Hom-coJacobi identity
\begin{equation}
(Id + \sigma + \sigma^2) \circ (\alpha \otimes \Delta) \circ \Delta = 0.
\end{equation}
We call $\Delta$ the cobracket.
\end{definition}

We recover Lie coalgebra when $\alpha = Id$. Just like (co)associative (co)algebras we have the following dualization properties.

\begin{theorem} \hfill
\begin{enumerate}
\item Let $(A,\Delta,\alpha)$ be a Hom-Lie coalgebra. Then $(A^*,\Delta^*,\alpha^*)$ is an Hom-Lie algebra.
\item Let $(A,[~,~],\alpha)$ be a Hom-Lie algebra. Then $(A^\circ,[~,~]^\circ,\alpha^\circ)$ is an Hom-Lie coalgebra.
\end{enumerate}
\end{theorem}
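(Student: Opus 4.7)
The plan is to mirror the duality arguments already carried out for the Hom-associative/Hom-coassociative pair, using the natural pairing $\langle \cdot,\cdot\rangle : A^{\otimes n} \times (A^*)^{\otimes n} \to \K$ to transport axioms from one side to the other. In both parts, the multiplicativity statement $\alpha \circ [\,,\,] = [\,,\,] \circ \alpha^{\otimes 2}$ dualizes immediately to $\alpha^* \circ \Delta^* = \Delta^* \circ (\alpha^*)^{\otimes 2}$ (and similarly on the finite dual), so the real content is anti-(co)symmetry and the Hom-(co)Jacobi identity.

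For part (1), set $[\,,\,] := \Delta^*$, so that $[f,g](x) = \langle \Delta(x),\, f \otimes g\rangle$. Anti-symmetry is immediate: for $f,g \in A^*$ and $x \in A$,
\begin{equation*}
[f,g](x) + [g,f](x) = \langle \Delta(x),\, f\otimes g\rangle + \langle \tau_{12}\Delta(x),\, f\otimes g\rangle = \langle (\Delta+\Delta^{op})(x),\, f\otimes g\rangle = 0.
\end{equation*}
For the Hom-Jacobi identity, I expand, for $f,g,h \in A^*$ and $x \in A$, the expression $[\alpha^*(f),[g,h]](x)$ using the Sweedler-style computation
\begin{equation*}
[\alpha^*(f),[g,h]](x) = \sum_{(x)} f(\alpha(x_1))\,[g,h](x_2) = \langle (\alpha\otimes\Delta)\circ\Delta(x),\, f\otimes g\otimes h\rangle.
\end{equation*}
Summing over the cyclic shifts (via $\sigma$) on the left translates to precomposing with $\mathrm{Id}+\sigma+\sigma^2$ on the right, so the Hom-Jacobian $J_{[\,,\,],\alpha^*}(f,g,h)$ evaluated on $x$ equals $\langle (\mathrm{Id}+\sigma+\sigma^2)\circ(\alpha\otimes\Delta)\circ\Delta(x),\, f\otimes g\otimes h\rangle$, which vanishes because $(A,\Delta,\alpha)$ satisfies the Hom-coJacobi identity. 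Since this holds for all $f,g,h$, $J_{[\,,\,],\alpha^*} = 0$.

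For part (2), the new technical point is that $[\,,\,]^*$ does not a priori land in $A^* \otimes A^*$, but one verifies, exactly as in the preceding Hom-associative case, that $[\,,\,]^*(A^\circ) \subset A^\circ \otimes A^\circ$: if $f \in A^\circ$ vanishes on a cofinite Lie ideal $I$, then $I \otimes A + A \otimes I$ is cofinite in $A \otimes A$ and $[\,,\,]^*(f)$ vanishes on it, so after identifying $(A\otimes A)^\circ = A^\circ\otimes A^\circ$ via \cite[Lemma 6.0.1]{Sweedler} we obtain a well-defined cobracket $\Delta := [\,,\,]^\circ$. Anti-cocommutativity $\Delta + \Delta^{op} = 0$ follows by pairing: $\langle \Delta(f),x\otimes y\rangle + \langle \tau_{12}\Delta(f),x\otimes y\rangle = f([x,y]+[y,x]) = 0$. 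The Hom-coJacobi identity is obtained by the same pairing trick as in part (1), read in reverse: for $f \in A^\circ$ and $x,y,z \in A$,
\begin{equation*}
\langle (\mathrm{Id}+\sigma+\sigma^2)\circ(\alpha^\circ\otimes\Delta)\circ\Delta(f),\, x\otimes y\otimes z\rangle = f\bigl(J_{[\,,\,],\alpha}(x,y,z)\bigr) = 0.
\end{equation*}

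The only genuinely subtle step is the one isolated above: checking that $[\,,\,]^*$ restricts to a map into $A^\circ \otimes A^\circ$. Once that is in place, everything reduces to straightforward pairing computations, and both items follow in parallel from the Hom-Lie (respectively Hom-coLie) axioms on the other side via the nondegeneracy of the pairing on the relevant tensor powers.
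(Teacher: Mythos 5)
Your proposal is correct and follows exactly the route the paper intends: the paper omits the proof of this theorem, remarking only that it goes ``just like'' the Hom-(co)associative duality theorems, and your argument is precisely that transposition -- antisymmetry of the bracket dualizes to anticocommutativity via $\Delta+\Delta^{op}=0$, the Hom-Jacobi and Hom-coJacobi identities correspond under the natural pairing through $(Id+\sigma+\sigma^2)\circ(\alpha\otimes\Delta)\circ\Delta$, and the only genuine issue in part (2), namely that $[~,~]^*$ restricts to a map $A^\circ\to A^\circ\otimes A^\circ$, is handled by the same cofinite-ideal argument ($I\otimes A+A\otimes I$ being cofinite and killed by $[~,~]^*(f)$ when $I$ is a cofinite Lie ideal annihilating $f$) that the paper uses in the associative case. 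No gaps.
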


The twisting principle also works, showing that the category of Hom-Lie coalgebras is closed under weak Hom-coalgebras morphisms.

\begin{theorem} \label{thmConstrHomLieCoalg}
Let $(A,\Delta,\alpha)$ be a Hom-Lie coalgebra and $\beta : A \to A$ a weak Hom-coalgebra morphism. Then $(A,\Delta_\beta = \Delta \circ \beta, \alpha \circ \beta)$ is a Hom-Lie coalgebra.
\end{theorem}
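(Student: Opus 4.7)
The plan is to verify the two defining conditions of a Hom-Lie coalgebra for the triple $(A, \Delta_\beta, \alpha\circ\beta)$, namely the skew-symmetry $\Delta_\beta + \Delta_\beta^{op} = 0$ and the Hom-coJacobi identity. The main tool, mirroring the proof of Theorem~\ref{thmConstrHomCoalg}, is the weak-morphism identity $(\beta\otimes\beta)\circ\Delta = \Delta\circ\beta$, which lets one migrate $\beta$'s from inside a coproduct to outside.

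For the skew condition, a single computation should suffice:
\[
\Delta_\beta^{op} = \tau_{12}\circ\Delta\circ\beta = \Delta^{op}\circ\beta = -\Delta\circ\beta = -\Delta_\beta,
\]
using $\Delta + \Delta^{op} = 0$ for the original Hom-Lie coalgebra.

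The substantive step is the Hom-coJacobi identity. I would first factor
\[
(\alpha\circ\beta)\otimes\Delta_\beta \;=\; (\alpha\otimes\Delta)\circ(\beta\otimes\beta),
\]
then pull the inner $(\beta\otimes\beta)$ across the remaining $\Delta$ via the weak-morphism identity, obtaining
\[
(\alpha\circ\beta \otimes \Delta_\beta)\circ\Delta_\beta = (\alpha\otimes\Delta)\circ(\beta\otimes\beta)\circ\Delta\circ\beta = (\alpha\otimes\Delta)\circ\Delta\circ\beta^{2}.
\]
Composing on the left with $Id+\sigma+\sigma^{2}$ and invoking the Hom-coJacobi identity $(Id+\sigma+\sigma^{2})\circ(\alpha\otimes\Delta)\circ\Delta = 0$ for the original structure yields the required vanishing, since $\beta^{2}$ is applied on the rightmost end and does not interact with the cyclic operator acting on the three left-hand tensor factors.

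The only remaining check is that $\alpha\circ\beta$ is multiplicative with respect to $\Delta_\beta$, which I would dispatch by the same migration scheme: $(\beta\otimes\beta)\circ\Delta = \Delta\circ\beta$ handles one $\beta$, and the multiplicativity $(\alpha\otimes\alpha)\circ\Delta=\Delta\circ\alpha$ of the original structure handles the $\alpha$. I do not anticipate any serious obstacle; the whole argument is the Hom-Lie-coalgebraic mirror of Theorem~\ref{thmConstrHomCoalg}, with skew-symmetry playing the role of the counit axiom and the Hom-coJacobi identity playing the role of Hom-coassociativity.
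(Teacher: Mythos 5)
Your proposal is correct and follows essentially the same route as the paper: the skew-symmetry is the one-line computation $(\Delta+\Delta^{op})\circ\beta=0$, and the Hom-coJacobi identity is verified by the same migration $(\alpha\circ\beta\otimes\Delta\circ\beta)\circ\Delta\circ\beta=(\alpha\otimes\Delta)\circ\Delta\circ\beta^{2}$ followed by the original identity. The extra multiplicativity check you mention is not carried out in the paper either, so nothing further is needed.
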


\begin{proof}
We have $\Delta_\beta + \Delta_\beta^{op} = (\Delta + \Delta^{op}) \circ \beta = 0$, and
\begin{align*}
(Id + \sigma + \sigma^2) \circ (\alpha \circ \beta \otimes \Delta_\beta) \circ \Delta_\beta &= (Id + \sigma + \sigma^2) \circ (\alpha \circ \beta \otimes \Delta \circ \beta) \circ \Delta \circ \beta \\
&= (Id + \sigma + \sigma^2) \circ (\alpha \otimes \Delta) \circ \Delta \circ \beta^2 \\
&= 0.
\end{align*}
\end{proof}

The previous theorem can be used to construct Hom-Lie coalgebras.

\begin{corollary} \hfill
\begin{enumerate}
\item Let $(A,\Delta)$ be a Lie coalgebra and $\beta : A \to A$ be a Lie coalgebra morphism. Then $(A,\Delta_\beta,\beta)$ is a Hom-Lie coalgebra.
\item Let $(A,\Delta,\alpha)$ be a (multiplicative) Hom-Lie coalgebra. For all non negative integer $n$, $(A,\Delta_{\alpha^n},\alpha^{n+1})$ is a Hom-Lie coalgebra.
\end{enumerate}
\end{corollary}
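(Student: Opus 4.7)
The plan is to obtain both statements as direct specializations of Theorem \ref{thmConstrHomLieCoalg}: in each case one identifies a Hom-Lie coalgebra structure and a weak Hom-coalgebra endomorphism, then reads off the conclusion. So the only real work will be to check the defining condition $(\beta \otimes \beta)\circ \Delta = \Delta \circ \beta$ of a weak Hom-coalgebra morphism for the map being used.

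For part (1), I would regard a Lie coalgebra $(A,\Delta)$ as the Hom-Lie coalgebra $(A,\Delta,\mathrm{Id})$; with $\alpha = \mathrm{Id}$ the conditions $\Delta + \Delta^{op} = 0$ and $(\mathrm{Id} + \sigma + \sigma^2)\circ(\alpha \otimes \Delta)\circ \Delta = 0$ reduce to the usual anticocommutativity and co-Jacobi identities. By definition, a Lie coalgebra morphism $\beta$ is a linear map satisfying $(\beta \otimes \beta)\circ \Delta = \Delta \circ \beta$, which is exactly the weak Hom-coalgebra morphism condition. Applying Theorem \ref{thmConstrHomLieCoalg} to $(A,\Delta,\mathrm{Id})$ with the weak morphism $\beta$ produces the Hom-Lie coalgebra $(A,\Delta\circ\beta,\mathrm{Id}\circ\beta) = (A,\Delta_\beta,\beta)$.

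For part (2), I would show that $\alpha^n$ is a weak Hom-coalgebra endomorphism of $(A,\Delta,\alpha)$ for every $n \geq 0$. The cases $n=0$ and $n=1$ are immediate from $(A,\Delta,\alpha)$ being multiplicative; the general case follows by a one-line induction,
\begin{equation*}
(\alpha^n \otimes \alpha^n)\circ \Delta = \alpha^{\otimes 2}\circ (\alpha^{n-1}\otimes \alpha^{n-1})\circ \Delta = \alpha^{\otimes 2} \circ \Delta \circ \alpha^{n-1} = \Delta \circ \alpha^n.
\end{equation*}
Theorem \ref{thmConstrHomLieCoalg} applied with $\beta = \alpha^n$ then gives that $(A,\Delta\circ\alpha^n,\alpha\circ\alpha^n) = (A,\Delta_{\alpha^n},\alpha^{n+1})$ is a Hom-Lie coalgebra. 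I do not anticipate any genuine obstacle: the whole corollary is a packaging of Theorem \ref{thmConstrHomLieCoalg}, and the only computational content is the induction verifying that powers of a multiplicative twisting map are themselves weak morphisms.
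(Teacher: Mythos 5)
Your proposal is correct and follows exactly the route the paper intends: the corollary is stated as an immediate consequence of Theorem \ref{thmConstrHomLieCoalg}, obtained by taking $\beta$ to be a Lie coalgebra morphism of $(A,\Delta,\mathrm{Id})$ in part (1) and $\beta=\alpha^n$ in part (2). Your explicit induction showing that $\alpha^n$ is a weak Hom-coalgebra morphism is the only verification needed, and it is carried out correctly.
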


The Hom-Lie bialgebra structure was introduced first in \cite{Yau:ClassicYangBaxter}. The  definition presented below is slightly more  general. They border the class defined by Yau and permit to consider the compatibility condition for different $A$-valued cohomology of Hom-Lie algebras. 

\begin{definition}
A \emph{Hom-Lie bialgebra} is a tuple  $(A,[~,~],\alpha,\Delta,\beta)$ where
\begin{enumerate}
\item $(A,[~,~],\alpha)$ is a Hom-Lie algebra.
\item $(A,\Delta,\beta)$ is a Hom-Lie coalgebra.
\item The following compatibility condition holds for $x,y \in A$:
\begin{equation}\label{CompaCondHLieBialg}
\Delta([x,y]) = ad_{\alpha(x)}(\Delta(y)) - ad_{\alpha(y)}(\Delta(x)),
\end{equation}
where the adjoint map $ad_x : A^{\otimes n} \to A^{\otimes n} \ (n \geqslant 2)$ is given by
\begin{equation}
ad_x(y_1 \otimes \dotsb \otimes y_n) = \sum_{i=1}^n \beta(y_1) \otimes \dotsb \otimes \beta(y_{i-1}) \otimes [x,y_i] \otimes \beta(y_{i+1}) \otimes \dotsb \otimes \beta(y_n).
\end{equation}
\end{enumerate}
A \emph{morphism} $f : A \to A'$ of Hom-Lie bialgebras is a linear map commuting with $\alpha$ and $\beta$ such that $f \circ [~,~] = [~,~] \circ f^{\otimes 2}$ and $\Delta \circ f = f^{\otimes 2} \circ \Delta$.

If $\alpha = \beta = Id$ we recover Lie bialgebras and if $\alpha = \beta$, we recover the class defined in \cite{Yau:ClassicYangBaxter}, these   Hom-Lie bialgebras are denoted $(A,[~,~],\Delta,\alpha)$.
\end{definition}
In terms of elements, the compatibility condition \eqref{CompaCondHLieBialg} writes
\begin{equation} \label{compa_Lie_bialg}
\begin{split}
\Delta([x,y]) =& ad_{\alpha(x)}(\Delta(y)) - ad_{\alpha(y)}(\Delta(x)) \\
=& [\alpha(x),y_1] \otimes \beta(y_2) + \beta(y_1) \otimes [\alpha(x),y_2] \\
&- [\alpha(y),x_1] \otimes \beta(x_2) - \beta(x_1) \otimes [\alpha(y),x_2].
\end{split}
\end{equation}
\begin{remark}
If $\alpha = \beta$, the compatibility condition \eqref{compa_Lie_bialg} is equivalent to say that $\Delta$ is a $1$-cocycle with respect to  $\alpha^0$-adjoint cohomology of Hom-Lie algebras and for $\beta=Id$, it corresponds to $\alpha^{-1}$-adjoint cohomology, (see \cite{Sheng} and \cite{AEM}).
\end{remark}
The following Proposition generalizes slightly \cite[Theorem 3.5]{Yau:ClassicYangBaxter}.
\begin{proposition}
Let $(A,[~,~],\Delta,\alpha)$ be a Hom-Lie bialgebra and $\beta : A \to A$ a Hom-Lie bialgebra morphism commuting with $\alpha$. Then $(A,[~,~]_\beta = \beta \circ [~,~],\Delta_\beta = \Delta \circ \beta,\alpha \circ \beta)$ is a Hom-Lie bialgebra.
\end{proposition}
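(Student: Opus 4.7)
The plan is to verify the three defining axioms of a Hom-Lie bialgebra for the triple $(A,[~,~]_\beta,\Delta_\beta,\alpha\circ\beta)$, namely the Hom-Lie algebra structure, the Hom-Lie coalgebra structure, and the compatibility condition \eqref{CompaCondHLieBialg}. The first two come essentially for free from the twisting machinery already established.

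First I would apply Theorem \ref{twist-ass_Lie}(2) to the Hom-Lie algebra $(A,[~,~],\alpha)$ with the weak morphism $\beta$ (using the remark right after the theorem, since $\beta$ is in fact a genuine Hom-Lie algebra morphism) to conclude that $(A,[~,~]_\beta,\alpha\circ\beta)$ is a Hom-Lie algebra. Dually, I would apply Theorem \ref{thmConstrHomLieCoalg} to $(A,\Delta,\alpha)$ with the weak Hom-coalgebra morphism $\beta$ to conclude that $(A,\Delta_\beta,\alpha\circ\beta)$ is a Hom-Lie coalgebra.

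The main work, and the only nontrivial obstacle, is verifying the compatibility condition \eqref{CompaCondHLieBialg} for the twisted structures. The idea is that everything factors through $\beta^{\otimes 2}$ applied twice, so both sides acquire a common factor of $(\beta^2)^{\otimes 2}$ and the original compatibility can be invoked. Explicitly, using that $\beta$ commutes with $\alpha$, preserves the bracket ($\beta\circ[~,~]=[~,~]\circ\beta^{\otimes 2}$) and the coproduct ($\Delta\circ\beta=\beta^{\otimes 2}\circ\Delta$), I compute
\begin{equation*}
\Delta_\beta([x,y]_\beta)=\Delta(\beta^{2}[x,y])=(\beta^2)^{\otimes 2}\bigl(\Delta([x,y])\bigr)=(\beta^2)^{\otimes 2}\bigl(ad_{\alpha(x)}\Delta(y)-ad_{\alpha(y)}\Delta(x)\bigr),
\end{equation*}
using the compatibility hypothesis for $(A,[~,~],\Delta,\alpha)$ in the last equality.

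For the right-hand side, writing $ad^\beta$ for the adjoint map built from $[~,~]_\beta$ and $\alpha\circ\beta$, and using $\Delta_\beta(y)=\beta(y_1)\otimes\beta(y_2)$, a direct computation yields
\begin{align*}
ad^\beta_{\alpha\beta(x)}(\Delta_\beta(y)) &=\beta[\alpha\beta(x),\beta(y_1)]\otimes\alpha\beta(\beta(y_2))+\alpha\beta(\beta(y_1))\otimes\beta[\alpha\beta(x),\beta(y_2)]\\
&=\beta^{2}[\alpha(x),y_1]\otimes\beta^{2}\alpha(y_2)+\beta^{2}\alpha(y_1)\otimes\beta^{2}[\alpha(x),y_2]\\
&=(\beta^2)^{\otimes 2}\bigl(ad_{\alpha(x)}\Delta(y)\bigr),
\end{align*}
and symmetrically $ad^\beta_{\alpha\beta(y)}(\Delta_\beta(x))=(\beta^2)^{\otimes 2}(ad_{\alpha(y)}\Delta(x))$. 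Subtracting these and comparing with the expression obtained for $\Delta_\beta([x,y]_\beta)$ above closes the argument. The only delicate point is book-keeping the four occurrences of $\beta$ on each side (two from $\Delta_\beta$ or $[~,~]_\beta$, two from the new twisting $\alpha\circ\beta$), but once the commutation of $\beta$ with $\alpha$, $[~,~]$ and $\Delta$ is used systematically, both sides collapse onto $(\beta^2)^{\otimes 2}$ applied to the corresponding quantities in the original bialgebra.
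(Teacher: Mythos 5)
Your proposal is correct and follows essentially the same route as the paper: the Hom-Lie algebra and Hom-Lie coalgebra parts are delegated to the earlier twisting theorems, and the compatibility condition is checked by showing that both $\Delta_\beta([x,y]_\beta)$ and the twisted adjoint terms factor as $(\beta^{2})^{\otimes 2}$ applied to the corresponding untwisted quantities, after which the original cocycle condition is invoked. The bookkeeping of the occurrences of $\beta$ in your $ad^\beta$ computation matches the paper's calculation exactly.
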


\begin{proof}
We already know that $(A,[~,~]_\beta,\beta \circ \alpha)$ is a Hom-Lie algebra and that $(A,\Delta_\beta,\alpha \circ \beta)$ is a Hom-Lie coalgebra. It remains to prove the compatibility condition \eqref{compa_Lie_bialg} for $\Delta_\beta$ and $[~,~]_\beta$, with the twisting map $\alpha \circ \beta = \beta \circ \alpha$. On one side, we have
\begin{equation*}
\Delta_\beta([x,y]) = \Delta \circ \beta^2 \circ [x,y] = \left(\beta^{\otimes 2}\right)^2 \circ \Delta([x,y]),
\end{equation*}
since $\Delta \circ \beta = \beta^{\otimes 2} \circ \Delta$. Using in addition $\beta \circ [~,~] = [~,~] \circ \beta^{\otimes 2}$ and the fact that $\alpha$ and $\beta$ commute, we have
\begin{equation*}
\begin{split}
ad_{\alpha \circ \beta(x)}(\Delta_\beta(y)) &= ad_{\alpha \circ \beta(x)}(\beta(y_1) \otimes \beta(y_2)) \\
&= [\alpha \circ \beta(x),\beta(y_1)]_\beta \otimes \alpha \circ \beta^2(y_2) + \alpha \circ \beta^2(y_1) \otimes [\alpha \circ \beta(x),\beta(y_2)]_\beta \\
&= \left(\beta^{\otimes 2}\right)^2([\alpha(x),y_1] \otimes \alpha(y_2) + \alpha(y_1) \otimes [\alpha(x),y_2]).
\end{split}
\end{equation*}
It follows that $\Delta_\beta([x,y]) = ad_{\alpha \circ \beta(x)}(\Delta_\beta(y)) - ad_{\alpha \circ \beta(y)}(\Delta_\beta(x))$ as wished.
\end{proof}

As for the Hom-bialgebra, the category of Hom-Lie bialgebra is not closed under weak Hom-Lie bialgebra morphisms.

Hom-Lie bialgebra can be dualized. We obtain the following proposition generalized the result stated in \cite{Yau:ClassicYangBaxter} for finite dimensional case and  using natural pairing.

\begin{proposition}
Let $(A,[~,~],\alpha,\Delta,\beta)$ be a Hom-Lie bialgebra. Then the finite dual $(A^\circ,[~,~]^\circ,\alpha^\circ,\Delta^\circ,\beta^\circ)$ is a Hom-Lie bialgebra as well.
\end{proposition}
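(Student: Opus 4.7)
The plan is to leverage the two duality theorems already established for Hom-Lie algebras and Hom-Lie coalgebras, so that only the compatibility condition remains to be checked. By the earlier dualization results, $(A^\circ, \Delta^\circ, \beta^\circ)$ is a Hom-Lie algebra (since $(A,\Delta,\beta)$ is a Hom-Lie coalgebra and $\Delta^\circ$ lands in $A^\circ\otimes A^\circ$) and $(A^\circ, [~,~]^\circ, \alpha^\circ)$ is a Hom-Lie coalgebra. Observe that the twisting maps swap roles under dualization: the twist for the bracket on $A^\circ$ is $\beta^\circ$, and the twist for the cobracket on $A^\circ$ is $\alpha^\circ$. So the only thing left is to show that, for $\varphi,\psi \in A^\circ$, the dualized compatibility
\[
[~,~]^\circ\bigl(\Delta^\circ(\varphi,\psi)\bigr) \;=\; \widetilde{ad}_{\beta^\circ(\varphi)}\bigl([~,~]^\circ(\psi)\bigr) \;-\; \widetilde{ad}_{\beta^\circ(\psi)}\bigl([~,~]^\circ(\varphi)\bigr)
\]
holds, where $\widetilde{ad}$ is built from $\Delta^\circ$ (the bracket on $A^\circ$) and the cobracket twist $\alpha^\circ$.

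The natural approach is to test both sides against an arbitrary $x\otimes y \in A\otimes A$ using the natural pairing, and then use the compatibility condition \eqref{compa_Lie_bialg} for $(A,[~,~],\alpha,\Delta,\beta)$. Concretely, I would first unfold
\[
\bigl\langle [~,~]^\circ(\Delta^\circ(\varphi,\psi)),\, x\otimes y\bigr\rangle \;=\; \Delta^\circ(\varphi,\psi)\bigl([x,y]\bigr) \;=\; (\varphi\otimes\psi)\bigl(\Delta([x,y])\bigr),
\]
then rewrite the inner $\Delta([x,y])$ via the compatibility of $A$ as $ad_{\alpha(x)}(\Delta(y)) - ad_{\alpha(y)}(\Delta(x))$. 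On the other side, I would unfold the right-hand side: for example, one piece is $\bigl\langle \widetilde{ad}_{\beta^\circ(\varphi)}([~,~]^\circ(\psi)),\, x\otimes y\bigr\rangle$, which expands (using Sweedler-like notation $[~,~]^\circ(\psi)=\psi^{(1)}\otimes\psi^{(2)}$) into two terms, each of which involves pairings of the form $\varphi([\alpha(\cdot),\cdot])$ and $\psi^{(i)}(\beta(\cdot))$. The whole point is that when one transports each tensor factor through the pairing, the $\alpha^\circ$ appearing in $\widetilde{ad}$ becomes $\alpha$ applied inside the bracket of $A$, and the $\beta^\circ(\varphi)$ outside becomes $\beta$ applied to a component on the $A$-side. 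This produces precisely the expansion of $(\varphi\otimes\psi)\bigl(ad_{\alpha(x)}(\Delta(y))\bigr)$, and symmetrically for the other term.

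The main obstacle is bookkeeping: one must track carefully how $\alpha^\circ$ and $\beta^\circ$ distribute across the tensor factors under the pairing, and verify that the two terms of $\widetilde{ad}_{\beta^\circ(\varphi)}$ on $A^\circ$ match the two terms of $ad_{\alpha(x)}$ on $A$ once paired with $\Delta(y)=y_1\otimes y_2$. The key identity used at each matching step is the adjunction $\langle f^\circ(\varphi), x\rangle = \langle \varphi, f(x)\rangle$ applied to $f\in\{\alpha,\beta,[x,-]\}$, which translates the twists and brackets back and forth between $A$ and $A^\circ$. Once this is done cleanly, antisymmetry in the pair $(x,y)$ on the $A$-side corresponds to antisymmetry in the pair $(\varphi,\psi)$ on the $A^\circ$-side, and the compatibility \eqref{CompaCondHLieBialg} for $(A^\circ,[~,~]^\circ,\beta^\circ,\Delta^\circ,\alpha^\circ)$ follows, finishing the proof.
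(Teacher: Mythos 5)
The paper states this proposition without any proof (it appears immediately after the remark ``Hom-Lie bialgebra can be dualized\dots''), so there is no argument of the authors' to compare yours against. Your outline is the natural one and is correct in structure: the two duality theorems of the preceding subsection give that $(A^\circ,\Delta^\circ,\beta^\circ)$ is a Hom-Lie algebra and $(A^\circ,[~,~]^\circ,\alpha^\circ)$ is a Hom-Lie coalgebra (the earlier cofinite-ideal discussion guarantees $[~,~]^\circ$ lands in $A^\circ\otimes A^\circ$), you correctly identify that the twisting maps exchange roles, and testing the dual compatibility condition against $x\otimes y$ via the pairing, together with \eqref{compa_Lie_bialg} on the $A$-side, is exactly the right mechanism.

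One point in your sketch is imprecise and worth fixing when you write out the bookkeeping. After expanding, the left side produces the four terms
\[
\varphi([\alpha(x),y_1])\psi(\beta(y_2))+\varphi(\beta(y_1))\psi([\alpha(x),y_2])-\varphi([\alpha(y),x_1])\psi(\beta(x_2))-\varphi(\beta(x_1))\psi([\alpha(y),x_2]),
\]
while the right side, once $\widetilde{ad}_{\beta^\circ(\varphi)}$ and $\widetilde{ad}_{\beta^\circ(\psi)}$ are unfolded through the pairing, yields terms such as $\psi(\beta(y_1))\varphi([x_2,\alpha(y)])$ in which the roles of the two Sweedler legs are interchanged relative to the left side. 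Matching these is not a matter of ``antisymmetry in the pair $(x,y)$'' or in $(\varphi,\psi)$; what is actually used is the skew-symmetry of the bracket, $[u,v]=-[v,u]$, and of the cobracket, $\sum y_1\otimes y_2=-\sum y_2\otimes y_1$, which together flip both the bracket arguments and the tensor legs and turn each right-hand term into the corresponding left-hand one. With that substitution made explicit, the four terms on each side pair off and the argument closes.
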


\section{Hom-Poisson algebras}\label{Hom-Poisson}

The Hom-Poisson algebras were introduced in \cite{MakSil-def}, where they  emerged naturally in the study of 1-parameter formal deformations of commutative Hom-associative algebras. Then this structure was studied in   \cite{Yau:Poisson}. It is shown that they are closed under twisting by suitable self maps and under tensor products. Moreover it is shown that (de)polarization Hom-Poisson algebras are equivalent to admissible Hom-Poisson algebras, each of which has only one binary operation. We survey in this section the fundamental results and provide  examples. 

\begin{definition}
A \emph{Hom-Poisson} algebra is a tuple  $(A, \mu, \{~,~\}, \alpha)$ consisting  of
\begin{enumerate}[(1)]
\item a commutative Hom-associative algebra $(A, \mu, \alpha)$ and
\item a Hom-Lie algebra $(A, \{~,~\}, \alpha)$
\end{enumerate}
such that the Hom-Leibniz identity
\begin{equation}
\{~,~\} \circ (\mu \otimes \alpha) = \mu \circ (\alpha \otimes \{~,~\} + (\{~,~\} \otimes \alpha) \circ \tau_{23})
\end{equation}
is satisfied. 
\end{definition}
In a Hom-Poisson algebra $(A, \{~,~\}, \mu, \alpha)$, the operation $\{~,~\}$ is  called 
\emph{Hom-Poisson bracket}. In terms of elements $x, y, z \in A$, the Hom-Leibniz identity says
\begin{equation*}
\{xy,\alpha(z)\} = \alpha(x) \{y,z\} + \{x,z\} \alpha(y)
\end{equation*}
where as usual $\mu(x,y)$ is abbreviated to $xy$. By the antisymmetry of the Hom-Poisson bracket $\{~,~\}$, the Hom-Leibniz identity is equivalent to
\begin{equation*}
\{\alpha(x), yz\} = \{x,y\} \alpha(z) + \alpha(y) \{x,z\}.
\end{equation*}
We recover Poisson algebras when the twisting map is the  identity.

\begin{definition}\label{DefinitionHom-Poissonbialgebra}
A \emph{Hom-Poisson bialgebra} $(A,\mu,\eta,\Delta,\varepsilon,\alpha,\{~,~\})$ is a Hom-Poisson algebra $(A,\mu,\{~,~\},\alpha)$ which is also a Hom-bialgebra $(A,\mu,\eta,\Delta,\varepsilon,S,\alpha)$, the two structures being compatible in the sense that $\{~,~\}$ is a $\mu$-coderivation,
\begin{equation*}
\Delta \circ \{~,~\} = (\{~,~\} \otimes \mu + \mu \otimes \{~,~\}) \circ \Delta^{[2]}.
\end{equation*}
In term of elements, this compatibility condition writes
\begin{equation*}
\Delta(\{a,b\}) = \{\Delta(a),\Delta(b)\}
\end{equation*}
with the Hom-Poisson bracket on $A \otimes A$ defined by
\begin{equation*}
\{a_1 \otimes a_2,b_1 \otimes b_2\} := \{a_1,b_1\} \otimes a_2 b_2 + a_1 b_1 \otimes \{a_2,b_2\}.
\end{equation*}
\end{definition}
We have the same definition for Hom-Poisson Hopf algebras.

\begin{example}\label{example1HomPoisson}
Let $\{x_1,x_2,x_3\}$  be a basis of a $3$-dimensional vector space
$A$ over $\K$. The following multiplication $\mu$, skew-symmetric
bracket and linear map $\alpha$ on $A$ define a Hom-Poisson algebra
over $\K^3${\rm :}
\[
\begin{array}{ll}
 \begin{array}{lll}
 \mu(x_1,x_1) &=&  x_1, \ \\
 \mu(x_1,x_2) &=& \mu(x_2,x_1)=x_3,\\
 \end{array}
 & \quad
 \begin{array}{lll}
 \{ x_1,x_2 \}&=& a x_2+ b x_3, \ \\
 \{ x_1, x_3 \}&=& c x_2+ d x_3, \ \\
 \end{array}
\end{array}
\]

\[
\alpha (x_1)= \lambda_1 x_2+\lambda_2 x_3 , \quad
\alpha (x_2) =\lambda_3 x_2+\lambda_4 x_3  , \quad
\alpha (x_3)=\lambda_5 x_2+\lambda_6 x_3
\]
where $a,b,c,d,\lambda_1,\lambda_2,\lambda_3,\lambda_4,\lambda_5,\lambda_6$ are parameters
in $\K$.
\end{example}

\begin{theorem}[\cite{Yau:Poisson}]
Let $A = (A,\mu,\{~,~\})$ be a Poisson algebra, and $\alpha : A \to A$ be a linear map which is multiplicative for $\mu$ and $\{~,~\}$. Then $$A_\alpha = (A,\quad \mu_\alpha = \alpha \circ \mu,\quad \{~,~\}_\alpha = \alpha \circ \{~,~\},\quad \alpha)$$ is a Hom-Poisson algebra.
\end{theorem}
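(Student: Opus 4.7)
The plan is to reduce the statement to three verifications, only one of which requires real work. First, the commutative Hom-associative part of $A_\alpha$ is handled by Theorem \ref{twist-ass_Lie}(1): since $\alpha$ is multiplicative for $\mu$, the triple $(A, \mu_\alpha, \alpha)$ is Hom-associative, and commutativity is immediate because $\mu_\alpha^{op} = \alpha \circ \mu \circ \tau_{12} = \alpha \circ \mu = \mu_\alpha$. Second, the Hom-Lie part $(A, \{~,~\}_\alpha, \alpha)$ follows from Theorem \ref{twist-ass_Lie}(2), since $\alpha$ is multiplicative for $\{~,~\}$.

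The only substantive step is verifying the Hom-Leibniz identity
\begin{equation*}
\{xy, \alpha(z)\}_\alpha = \alpha(x) \cdot_\alpha \{y,z\}_\alpha + \{x,z\}_\alpha \cdot_\alpha \alpha(y),
\end{equation*}
where $\cdot_\alpha$ denotes $\mu_\alpha$. My approach would be to expand both sides using the definitions of $\mu_\alpha$ and $\{~,~\}_\alpha$ and then push all copies of $\alpha$ outward by the two multiplicativity hypotheses. Concretely, the left-hand side becomes $\alpha\bigl(\{\alpha(xy), \alpha(z)\}\bigr) = \alpha^2\bigl(\{xy, z\}\bigr)$, and each summand on the right-hand side similarly collapses to $\alpha^2\bigl(x\{y,z\}\bigr)$ and $\alpha^2\bigl(\{x,z\}y\bigr)$ respectively. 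The classical Leibniz identity for the Poisson algebra $A$ then delivers equality after applying $\alpha^2$.

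I expect no real obstacle here; the result is essentially a one-line computation once the multiplicativity of $\alpha$ for both operations is used to factor $\alpha^2$ out of every term. The only thing to watch is the bookkeeping of compositions if one prefers the coordinate-free formulation with $\tau_{23}$: in that case one would write
\begin{equation*}
\{~,~\}_\alpha \circ (\mu_\alpha \otimes \alpha) = \alpha \circ \{~,~\} \circ \alpha^{\otimes 2} \circ (\mu \otimes \mathrm{Id}) = \alpha^2 \circ \{~,~\} \circ (\mu \otimes \mathrm{Id}),
\end{equation*}
and analogously the right-hand side of the Hom-Leibniz identity equals $\alpha^2$ composed with the classical Leibniz right-hand side, so the two sides coincide by the Leibniz rule in $A$. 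This parallels exactly the proof scheme of Theorem \ref{twist-ass_Lie}, and can be presented compactly in a single display if desired.
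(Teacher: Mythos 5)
Your proposal is correct and follows essentially the same route as the paper's proof: both reduce the algebra and Lie parts to Theorem \ref{twist-ass_Lie} and then verify the Hom-Leibniz identity by using the multiplicativity of $\alpha$ for $\mu$ and $\{~,~\}$ to factor out $\alpha^2$ and invoke the classical Leibniz rule. The coordinate-free display you sketch at the end is line-for-line the computation in the paper.
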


\begin{proof}
Using Theorem \eqref{twist-ass_Lie}, we already have that $(A, \mu_\alpha, \alpha)$ is a commutative Hom-associative algebra and that $(A, \{~,~\}_\alpha, \alpha)$ is a Hom-Lie algebra. It remains to check the Hom-Leibniz identity 
\begin{equation*}
\begin{split}
\{~,~\}_\alpha \circ (\mu_\alpha \otimes \alpha) &= \alpha \circ \{~,~\} \circ \alpha^{\otimes 2} \circ (\mu \otimes Id) \\
&= \alpha^2 \circ \{~,~\} \circ (\mu \otimes Id),
\intertext{since $A$ is a Poisson algebra}
&= \alpha^2 \circ \mu \circ (Id \otimes \{~,~\} + (\{~,~\} \otimes Id) \circ \tau_{2 3}) \\
&= \alpha \circ \mu \circ \alpha^{\otimes 2} \circ (Id \otimes \{~,~\} + (\{~,~\} \otimes Id) \circ\tau_{23}) \\
&= \mu_\alpha \circ (\alpha \otimes \{~,~\}_\alpha + (\{~,~\}_\alpha \otimes \alpha) \circ \tau_{23}),
\end{split}
\end{equation*}
so $A_\alpha$ is a Hom-Poisson algebra.
\end{proof}

\begin{example}
The Sklyanin Poisson algebra $q_4(\mathcal{E})$ (see \cite{ORP} for a more detailed definition and properties) is defined on $\C[x_0, x_1, x_2, x_3]$ by a parameter $k \in \C$ with the usual polynomial multiplication, and bracket given by $\{~,~\}$ with brackets between the coordinate functions defined as
\begin{equation*}
\begin{aligned}
\{x_i, x_{i+1}\} &= k^2 x_i x_{i+1} - x_{i+2} x_{i+3}, \\
\{x_i, x_{i+2}\} &= k (x^2_{i+3} - x^2_{i+1}),
\end{aligned}
\qquad i = 1, 2, 3, 4 \pmod 4.
\end{equation*}

We again search a morphism $\alpha : q_4(\mathcal{E}) \to q_4(\mathcal{E})$ written as $\alpha = (\alpha_{ij})$ with respect to the  basis $(x_0,x_1,x_2,x_3)$, by solving the coefficients $\alpha_{ij}$ in the equations $\alpha([x_i,x_j]) = [\alpha(x_i),\alpha(x_j)]$ with respect to the basis. For simplicity, we take $\alpha$ diagonal, $\alpha_{ij} = 0$ if $i \neq j$. \\
We obtain $q_4(\mathcal{E})_\alpha$, Hom-Poisson versions of $q_4(\mathcal{E})$, with $\alpha$ given by
\begin{enumerate}
\item $\alpha(x_0) = - \lambda x_0, \ \alpha(x_1) = i \lambda x_1, \ \alpha(x_2) = \lambda x_2, \ \alpha(x_3) = -i \lambda x_3$,
\item $\alpha(x_0) = - \lambda x_0, \ \alpha(x_1) = -i \lambda x_1, \ \alpha(x_2) = \lambda x_2, \ \alpha(x_3) = i \lambda x_3$,
\item $\alpha(x_0) = \lambda x_0, \ \alpha(x_1) = - \lambda x_1, \ \alpha(x_2) = \lambda x_2, \ \alpha(x_3) = - \lambda x_3$,
\item $\alpha = \lambda id$,
\end{enumerate}
with  $\lambda\in\K$.

For example, $q_4(\mathcal{E})$ carries a structure of Hom-Poisson algebra, for any $\lambda\in \C$, with  the following bracket
\begin{equation*}
\begin{aligned}
\{x_i, x_{i+1}\} &=-\lambda^2 ( k^2 x_i x_{i+1} - x_{i+2} x_{i+3}), \\
\{x_i, x_{i+2}\} &= \lambda^2  k (x^2_{i+3} - x^2_{i+1}),
\end{aligned}
\qquad i = 1, 2, 3, 4 \pmod 4,
\end{equation*}
and linear map
\[
\alpha(x_0) = \lambda x_0, \ \alpha(x_1) = - \lambda x_1, \ \alpha(x_2) = \lambda x_2, \ \alpha(x_3) = - \lambda x_3.
\]
\end{example}

\subsection{Constructing Hom-Poisson algebras from Hom-Lie algebras}

Suppose that $(A, [~,~], \alpha)$ is a finite-dimensional Hom-Lie algebra and  $\{e_i\}_{1 \leqslant i \leqslant n}$ be a basis of $A$. Set $C_{ij}^k$ be the structure constants of the bracket with respect to the basis, that is $[e_i,e_j] = \sum_{k=1}^n C_{ij}^k e_k$ and $\alpha_i^s$ be the coefficients of the morphism $\alpha$,  that is $\alpha(e_i) = \sum_{s=1}^n \alpha_i^s e_s$. \\
The skew-symmetry  of the bracket and the Hom-Jacobi condition can be written with the structure constants as 
\begin{gather*}
C_{ji}^k = - C_{ij}^k \qquad \textrm{antisymmetry}, \\
\sum_{1 \leqslant p,q \leqslant n} (\alpha_i^p C_{jk}^q + \alpha_j^p C_{ki}^q + \alpha_k^p C_{ij}^q)C_{pq}^s = 0 \qquad \textrm{Hom-Jacobi identity}.
\end{gather*}

To construct a Hom-Poisson algebra from a Hom-Lie algebra, we should define a commutative multiplication $\cdot$ which is Hom-associative and a bracket $\{~,~\}$ satisfying the Hom-Leibniz identity. Define the bracket $\{~,~\}$ as being equal to the bracket $[~,~]$ on the basis, and extended by the Hom-Leibniz identity. \\
Set $M_{ij}^k$ be the structure constants for the multiplication,  that is  $e_i \cdot e_j = \sum_{k=1}^n M_{ij}^k e_k$. By commutativity, $M_{ji}^k = M_{ij}^k$. The Hom-Leibniz identity writes
\begin{align*}
0 &= \{e_i \cdot e_j, \alpha(e_k)\} - \alpha(e_i) \cdot \{e_j,e_k\} - \{e_i,e_k\} \cdot \alpha(e_j) \\
\Leftrightarrow 0 &= \sum_{s=1}^n \underbrace{\big( M_{ij}^p \alpha_k^q C_{pq}^s - (\alpha_i^p C_{jk}^q + C_{ik}^p \alpha_j^q) M_{pq}^s \big)}_{S_{ijks}} e_s \\
\Leftrightarrow 0 &= S_{ijks},
\end{align*}
giving a linear system in the $M_{ij}^l$ of $n^4$ equations in $n^3$ unknowns\footnote{actually $\frac{n^2(n+1)}{2}$ unknowns using the commutativity}.

The Hom-associativity writes
\begin{align*}
0 &= (e_i \cdot e_j) \cdot \alpha(e_k) - \alpha(e_i) \cdot (e_j \cdot e_k) \\
\Leftrightarrow 0 &= \sum_{s=1}^n \underbrace{\big( (M_{ij}^p \alpha_k^q - \alpha_i^p M_{jk}^q) M_{pq}^s \big)}_{R_{ijks}} e_s \\
\Leftrightarrow 0 &= R_{ijks},
\end{align*}
giving a non linear system in the $M_{ij}^l$ of $n^4$ equations in $n^3$ unknowns.

Solving first the equations of Hom-Leibniz and then checking if the solutions satisfy the Hom-associativity equations, we obtain example of Hom-Poisson algebras.

\begin{example}
We consider the $3$-dimensional Hom-Lie algebra with basis $\{e_1,e_2,e_3\}$, brackets given by
\begin{align*}
[e_1,e_2] &= C_{12}^2 e_2 + C_{12}^3 e_3 \\
[e_1,e_3] &= C_{13}^2 e_2 + C_{13}^3 e_3 \\
[e_2,e_3] &= 0,
\end{align*}
and morphism $\alpha = \left(\begin{smallmatrix} 0 & 0 & 0 \\ 0 & b & 0 \\ 0 & 0 & b \end{smallmatrix}\right)$ in the basis $\{e_1,e_2,e_3\}$.
The only multiplications giving a Hom-Poisson algebra are of the form
\begin{align*}
e_1 \cdot e_2 &= \lambda e_2 \\
e_1 \cdot e_3 &= \lambda e_3 \\
e_2 \cdot e_3 &= 0 \\
e_i \cdot e_i &= 0 \quad \text{for}\ i=1,2,3.
\end{align*}
\end{example}

\begin{example}
Other examples of Hom-Poisson algebras of dimension $3$ with basis $\{e_1,e_2,e_3\}$ are given by twisting the following Poisson algebra:
\begin{equation*}
e_1 \cdot e_1 = e_2 \qquad \{e_1,e_3\} = a e_2 + b e_3,
\end{equation*}
with all other multiplication and brackets equal to zero.

The morphism $\alpha$ is computed to be multiplicative for the multiplication and the bracket.
\begin{center}
\begin{tabular}{c|c}
With $a \neq 0,\ b \neq 0$ & With $a \neq 0, b=0$ \\
$\alpha(e_1)= e_1 + \alpha_{12} e_2 + \alpha_{13} e_3$ & $\alpha(e_1) = c e_1 + \alpha_{12} e_2 + \alpha_{13} e_3$ \\
$\alpha(e_2) = e_2$ & $\alpha(e_2) = c^2 e_2$ \\
$\alpha(e_3) = \alpha_{32} e_2 + \frac{b}{a} \alpha_{32} e_3$ & $\alpha(e_3) = \alpha_{31} e_1 + \alpha_{32} e_2 + \alpha_{33} e_3$ \\
& where $c$ is a solution (if it exist) \\
& of $X^2 - \alpha_{33} X + \alpha_{13} \alpha_{31} = 0$.
\end{tabular}
\end{center}
\end{example}

\subsection{Flexibles structures}

We recall here some results on flexible structures described in \cite{MakSil-struct} and provide a connection to Hom-Poisson algebras.

\begin{definition}
A Hom-algebra $A = (A, \mu, \alpha)$ is called flexible if for any $x, y \in A$
\begin{equation} \label{flex_eq}
\mu(\mu(x, y), \alpha(x)) = \mu(\alpha(x), \mu(y, x))).
\end{equation}
\end{definition}

\begin{remark}
Using the Hom-associator $\as_{\mu,\alpha}$, the condition \eqref{flex_eq} may be written as
\begin{equation*}
\as_{\mu,\alpha}(x, y, x) = 0.
\end{equation*}
\end{remark}

\begin{lemma} \label{flex_lemma} Let $A = (A, \mu, \alpha)$ be a Hom-algebra. The following assertions are equivalent
\begin{enumerate}[(1)]
\item $A$ is flexible.
\item For any $x, y \in A,\quad \as_{\mu,\alpha}(x, y, x) = 0$.
\item For any $x, y, z \in A,\quad \as_{\mu,\alpha}(x, y, z) = -\as_{\mu,\alpha}(z, y, x)$.
\end{enumerate}
\end{lemma}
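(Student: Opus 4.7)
My plan is to prove the chain (1) $\Leftrightarrow$ (2) $\Leftrightarrow$ (3) by first unpacking the definition of the Hom-associator, then using a polarization trick (valid because $\K$ has characteristic $0$) for the less obvious direction.

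First, the equivalence (1) $\Leftrightarrow$ (2) is essentially a reformulation: by the definition of the Hom-associator we have
\[
\as_{\mu,\alpha}(x,y,x) = \mu(\mu(x,y),\alpha(x)) - \mu(\alpha(x),\mu(y,x)),
\]
so the vanishing of $\as_{\mu,\alpha}(x,y,x)$ on all pairs $(x,y)$ is literally the flexibility identity \eqref{flex_eq}. No further computation is needed.

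Next, (3) $\Rightarrow$ (2) is immediate by setting $z = x$ in the skew-symmetry relation: one obtains $\as_{\mu,\alpha}(x,y,x) = -\as_{\mu,\alpha}(x,y,x)$, whence $2\,\as_{\mu,\alpha}(x,y,x) = 0$, and since $\mathrm{char}\,\K = 0$ we conclude $\as_{\mu,\alpha}(x,y,x) = 0$.

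The main (though still routine) step is (2) $\Rightarrow$ (3), which I would handle by polarization. Because $\mu$ and $\alpha$ are linear, $\as_{\mu,\alpha}$ is trilinear in its three arguments. Expanding $\as_{\mu,\alpha}(x+z,\,y,\,x+z)$ yields
\[
\as_{\mu,\alpha}(x,y,x) + \as_{\mu,\alpha}(x,y,z) + \as_{\mu,\alpha}(z,y,x) + \as_{\mu,\alpha}(z,y,z) = 0.
\]
Applying hypothesis (2) to the first and last terms kills them, leaving $\as_{\mu,\alpha}(x,y,z) + \as_{\mu,\alpha}(z,y,x) = 0$, which is (3). The only subtlety to flag is the use of $\mathrm{char}\,\K = 0$ (or at least $\neq 2$) in the $(3)\Rightarrow(2)$ direction, but the paper has already assumed this globally in Section~\ref{twist-usual}, so nothing extra is needed.
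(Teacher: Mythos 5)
Your proof is correct and follows essentially the same route as the paper: (1) $\Leftrightarrow$ (2) by definition, and the polarization of $\as_{\mu,\alpha}$ in its outer arguments (the paper expands $\as_{\mu,\alpha}(x-z,y,x-z)$ where you use $x+z$, an immaterial difference). Your explicit remark that $(3)\Rightarrow(2)$ needs $\mathrm{char}\,\K \neq 2$ is a small point of care that the paper leaves implicit in the word ``equivalent.''
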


\begin{proof}
The equivalence of the first two assertions follows from the definition. The  assertion  $\as_{\mu,\alpha}(x - z, y, x - z) = 0$ holds by definition and it is equivalent to $\as_{\mu,\alpha}(x, y, z) + \as_{\mu,\alpha}(z, y, x) = 0$ by linearity.
\end{proof}

\begin{corollary}
Any Hom-associative algebra is flexible.
\end{corollary}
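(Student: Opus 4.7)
The statement is that any Hom-associative algebra is flexible, and this should be essentially immediate from the preceding lemma. My plan is as follows.

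First, I would recall the two characterizations in play. By definition, a Hom-associative algebra $(A,\mu,\alpha)$ satisfies $\as_{\mu,\alpha} = 0$ as a map $A^{\otimes 3} \to A$, i.e.\ $\as_{\mu,\alpha}(x,y,z) = 0$ for every triple $x,y,z \in A$. By Lemma~\ref{flex_lemma}, flexibility of $(A,\mu,\alpha)$ is equivalent to the condition $\as_{\mu,\alpha}(x,y,x) = 0$ for all $x,y \in A$.

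The argument is then a specialization: given arbitrary $x,y \in A$, apply the Hom-associativity identity to the triple $(x,y,x)$ to obtain $\as_{\mu,\alpha}(x,y,x) = 0$. By the equivalence $(1) \Leftrightarrow (2)$ in Lemma~\ref{flex_lemma}, this yields flexibility.

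There is no real obstacle here; the content of the corollary is just that the flexibility identity is a particular instance of the full Hom-associativity identity. I would present the proof as a single sentence invoking Lemma~\ref{flex_lemma} on the specialization $z = x$, without any computation.
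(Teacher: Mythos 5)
Your proof is correct and matches the paper's intent exactly: the paper states this corollary without proof precisely because, as you observe, flexibility in the form $\as_{\mu,\alpha}(x,y,x)=0$ of Lemma~\ref{flex_lemma} is just the specialization $z=x$ of the vanishing Hom-associator. Nothing further is needed.
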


Let $A = (A, \mu, \alpha)$ be a Hom-algebra, where $\mu$ is the multiplication and $\alpha$ a homomorphism. We define two new multiplications using  $\mu$ :
\begin{equation*}
\forall\ x,y \in A \quad x \bullet y = \mu(x, y)+\mu(y, x),  \quad \{x,y\} = \mu(x,y) - \mu(y,x).
\end{equation*}
We set $A^+ = (A, \bullet, \alpha)$ and $A^- = (A, \{~,~\},\alpha)$.

\begin{proposition} \label{flex_Leibniz_Proposition} A Hom-algebra $A = (A, \mu, \alpha)$ is flexible if and only if
\begin{equation} \label{flex_Leibniz}
\{\alpha(x), y \bullet z\} = \{x, y\} \bullet \alpha(z) + \alpha(y) \bullet \{x, z\}.
\end{equation}
\end{proposition}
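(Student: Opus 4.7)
The plan is to expand both sides of the identity \eqref{flex_Leibniz} in terms of $\mu$ alone, then collect terms to express the difference \textbf{LHS}$-$\textbf{RHS} as a signed sum of Hom-associators, and finally apply Lemma \ref{flex_lemma}.

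First I would rewrite $\{\alpha(x),y\bullet z\}$ and $\{x,y\}\bullet\alpha(z)+\alpha(y)\bullet\{x,z\}$ using the definitions $x\bullet y=\mu(x,y)+\mu(y,x)$ and $\{x,y\}=\mu(x,y)-\mu(y,x)$. This yields, for each side, a linear combination of the twelve expressions $\mu(\mu(a,b),\alpha(c))$ and $\mu(\alpha(a),\mu(b,c))$ ranging over the six permutations of $(x,y,z)$. Each such pair $\mu(\mu(a,b),\alpha(c))-\mu(\alpha(a),\mu(b,c))=\as_{\mu,\alpha}(a,b,c)$ is precisely a Hom-associator, so after cancellation I expect the difference to reduce to
\begin{equation*}
\mathbf{LHS}-\mathbf{RHS}=-\bigl[\as_{\mu,\alpha}(x,y,z)+\as_{\mu,\alpha}(z,y,x)\bigr]-\bigl[\as_{\mu,\alpha}(x,z,y)+\as_{\mu,\alpha}(y,z,x)\bigr]+\bigl[\as_{\mu,\alpha}(y,x,z)+\as_{\mu,\alpha}(z,x,y)\bigr].
\end{equation*}

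For the forward implication, if $A$ is flexible, Lemma \ref{flex_lemma}(3) says $\as_{\mu,\alpha}(a,b,c)+\as_{\mu,\alpha}(c,b,a)=0$ for all $a,b,c\in A$. Each of the three bracketed pairs above is precisely of this form, so all three vanish and the identity \eqref{flex_Leibniz} follows.

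For the converse, assume \eqref{flex_Leibniz} holds for all $x,y,z\in A$. Specialising $z=x$ collapses the right-hand side of the displayed expression: the second and third brackets each become $\as_{\mu,\alpha}(x,x,y)+\as_{\mu,\alpha}(y,x,x)$, which cancel against each other (same expressions with opposite signs), while the first bracket contributes $2\as_{\mu,\alpha}(x,y,x)$. Hence $2\as_{\mu,\alpha}(x,y,x)=0$, and since $\K$ has characteristic $0$ we conclude $\as_{\mu,\alpha}(x,y,x)=0$, so $A$ is flexible by Lemma \ref{flex_lemma}(2).

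The main obstacle is purely bookkeeping: the expansion produces twelve terms on each side, so one must be careful to track signs when regrouping into associators. A clean presentation is to introduce shorthand $P_{abc}:=\mu(\mu(a,b),\alpha(c))$ and $Q_{abc}:=\mu(\alpha(a),\mu(b,c))$, list the contribution of each side in this notation, and then pair each $P_{abc}-Q_{abc}$ into $\as_{\mu,\alpha}(a,b,c)$ before invoking Lemma \ref{flex_lemma}.
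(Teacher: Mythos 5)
Your proof is correct and follows essentially the same route as the paper's: both reduce the identity \eqref{flex_Leibniz} by expansion to the vanishing of the six-term associator combination $\as_{\mu,\alpha}(x,y,z)+\as_{\mu,\alpha}(z,y,x)+\as_{\mu,\alpha}(x,z,y)+\as_{\mu,\alpha}(y,z,x)-\as_{\mu,\alpha}(y,x,z)-\as_{\mu,\alpha}(z,x,y)$, use Lemma \ref{flex_lemma}(3) to kill each of the three reversed pairs in the forward direction, and specialize $z=x$ in the converse to isolate $2\as_{\mu,\alpha}(x,y,x)=0$. Your write-up is in fact more explicit than the paper's (which leaves the expansion as "by expansion"), and your sign bookkeeping checks out.
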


\begin{proof}
Let A be a flexible Hom-algebra. By Lemma \eqref{flex_lemma}, this is equivalent to $\as_{\mu,\alpha}(x, y, z) + \as_{\mu,\alpha}(z, y, x) = 0$ for any $x, y, z \in A$.  This implies
\begin{eqnarray} \label{flex_Leibniz_equiv}
\as_{\mu,\alpha}(x, y, z) + \as_{\mu,\alpha}(z, y, x) + \as_{\mu,\alpha}(x, z, y) +\\ \nonumber  \as_{\mu,\alpha}(y, z, x) - \as_{\mu,\alpha}(y, x, z) - \as_{\mu,\alpha}(z, x, y) = 0
\end{eqnarray}
By expansion, the previous relation is equivalent to $\{\alpha(x), y \bullet z\} = \{x, y\} \bullet \alpha(z) + \alpha(y) \bullet \{x, z\}$. Conversely, assume we have the condition \eqref{flex_Leibniz} in Proposition. By setting $x = z$ in \eqref{flex_Leibniz_equiv}, one
gets $\as_{\mu,\alpha}(x, y, x) = 0$. Therefore $A$ is flexible.
\end{proof}

Hence, we obtain the following connection to Hom-Poisson algebras. 
\begin{proposition}
Let  $A = (A, \mu, \alpha)$ be a flexible Hom-algebra which is Hom-associative. Then $(A, \bullet, \{~,~\}, \alpha)$, where $\bullet$ and $\{~,~\}$ are the operations defining $A^+$ and $A^-$ respectively, is a Hom-Poisson algebra.
\end{proposition}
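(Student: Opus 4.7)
I would verify the three conditions defining a Hom-Poisson structure on $(A, \bullet, \{~,~\}, \alpha)$: that $(A, \bullet, \alpha)$ is commutative and Hom-associative, that $(A, \{~,~\}, \alpha)$ is a Hom-Lie algebra, and that the Hom-Leibniz identity relating these two operations holds.

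For the Hom-Lie part, I would invoke the standard commutator-to-Hom-Lie construction recalled in the Remark following Theorem \ref{twist-ass_Lie}: antisymmetry of $\{x,y\} = \mu(x,y) - \mu(y,x)$ is immediate, and the Hom-Jacobi identity reduces, via a direct twelve-term expansion of $\circlearrowleft_{x,y,z}\{\alpha(x),\{y,z\}\}$, to repeated applications of $\as_{\mu,\alpha}=0$. Commutativity of $\bullet = \mu + \mu^{op}$ is likewise clear; its Hom-associativity would be checked by expanding $(x\bullet y)\bullet\alpha(z)$ and $\alpha(x)\bullet(y\bullet z)$ into their eight constituent terms and pairing them via Hom-associativity of $\mu$.

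The central step is the Hom-Leibniz identity $\{x\bullet y,\alpha(z)\} = \alpha(x)\bullet\{y,z\} + \{x,z\}\bullet\alpha(y)$. Since any Hom-associative algebra is flexible (the corollary immediately preceding Proposition \ref{flex_Leibniz_Proposition}), that proposition applies and yields $\{\alpha(x), y\bullet z\} = \{x,y\}\bullet\alpha(z) + \alpha(y)\bullet\{x,z\}$. Exchanging the roles of $x$ and $z$, then using antisymmetry of $\{~,~\}$ and commutativity of $\bullet$ on both sides, converts this identity into exactly the required Hom-Leibniz form. The most delicate point I foresee is the Hom-associativity of $\bullet$, since classically the symmetrized product of an associative algebra is only Jordan (flexible) rather than associative; the careful matching of the eight expanded terms under $\as_{\mu,\alpha}=0$ is where the bookkeeping subtlety resides.
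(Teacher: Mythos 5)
Your decomposition into the three Hom-Poisson axioms is the right frame, and two of the three checks are sound and coincide with the paper's (entirely implicit) route: the paper prints no proof at all, presenting the statement as an immediate consequence of Lemma \ref{flex_lemma} and Proposition \ref{flex_Leibniz_Proposition}. Your treatment of the Hom-Lie bracket (commutator of a Hom-associative algebra) and of the Hom-Leibniz identity (apply Proposition \ref{flex_Leibniz_Proposition}, then swap $x$ and $z$ and use antisymmetry of $\{~,~\}$ and commutativity of $\bullet$) is correct and is exactly the intended argument.

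The genuine gap is the step you yourself flag as ``bookkeeping'': the Hom-associativity of $\bullet$. The eight terms of
\[
\as_{\bullet,\alpha}(x,y,z)=(xy)\alpha(z)+(yx)\alpha(z)+\alpha(z)(xy)+\alpha(z)(yx)-\alpha(x)(yz)-\alpha(x)(zy)-(yz)\alpha(x)-(zy)\alpha(x)
\]
do \emph{not} pair off into Hom-associators of $\mu$: after extracting $\as_{\mu,\alpha}(x,y,z)=(xy)\alpha(z)-\alpha(x)(yz)$ and $-\as_{\mu,\alpha}(z,y,x)=\alpha(z)(yx)-(zy)\alpha(x)$, the remainder $(yx)\alpha(z)+\alpha(z)(xy)-\alpha(x)(zy)-(yz)\alpha(x)$ is not a linear combination of associators (any attempt introduces the monomials $\alpha(y)(xz)$, $(xz)\alpha(y)$, etc., which do not occur). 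This is not a Hom-phenomenon: for $\alpha=Id$ it is the classical fact that the Jordan product of an associative algebra is commutative and flexible but in general not associative. Concretely, in $M_2(\K)$ with $x=E_{11}$, $y=E_{12}$, $z=E_{21}$ one gets $(x\bullet y)\bullet z-x\bullet(y\bullet z)=E_{22}-E_{11}\neq 0$, so the first Hom-Poisson axiom fails for $(M_2(\K),\bullet,\{~,~\},Id)$ even though $M_2(\K)$ is associative, hence flexible. So your pairing strategy cannot be carried out, and no argument can close this step without an additional hypothesis --- either commutativity of $\mu$ (which forces $\{~,~\}=0$ and trivializes the statement) or the $1$-operation identities \eqref{as_cyclic}--\eqref{as_sum} of the following subsection, from which the paper \emph{does} correctly derive $\as_{\bullet,\alpha}=0$ without using $\as_{\mu,\alpha}=0$. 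The defect therefore sits in the proposition as stated (the paper supplies no proof of this axiom), but your proposal as written does not repair it.
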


\subsection{\texorpdfstring{$1$-operation Poisson algebras}{1-operation Poisson algebras}}

Algebras with one operation were introduced by  Loday and studied by Markl and  Remm in \cite{MR}. The twisted version was studied in  \cite{Yau:Poisson} where they are called admissible Hom-Poisson algebras.

\begin{definition}
A \emph{$1$-operation Hom-Poisson algebra} is a Hom-algebra $(A, \cdot , \alpha)$ satisfying, for any $x, y, z \in A$,
\begin{equation} \label{1op_hom-Poisson}
3 \as_{\cdot,\alpha}(x,y,z) = (x \cdot z) \cdot \alpha(y) + (y \cdot z) \cdot \alpha(x) - (y \cdot x) \cdot \alpha(z) - (z \cdot x) \cdot \alpha(y).
\end{equation}
\end{definition}
If $\alpha$ is the identity map, $A$ is called a $1$-operation Poisson algebra.

We consider a Hom-algebra $(A, \cdot , \alpha)$. We define two operations $\bullet : A \otimes A \to A$ and $\{~,~\} : A \otimes A \to A$ by
\begin{equation}
\forall\ x,y \in A, \qquad \qquad x \bullet y = x \cdot y + y \cdot x, \qquad \{x,y\} = x \cdot y - y \cdot x.
\end{equation}

\begin{theorem}
$(A, \bullet , \{~,~\}, \alpha)$ is a Hom-Poisson algebra if and only if $(A, \cdot , \alpha)$ is a $1$-operation Hom-Poisson algebra.
\end{theorem}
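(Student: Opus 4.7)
This is a (de)polarization statement: one must show that the three nontrivial axioms of a Hom-Poisson structure on $(A,\bullet,\{~,~\},\alpha)$ jointly encode exactly the single identity \eqref{1op_hom-Poisson} on $(A,\cdot,\alpha)$. First, note that commutativity of $\bullet$, antisymmetry of $\{~,~\}$, and multiplicativity of $\alpha$ with respect to both derived operations follow automatically from their definitions together with the assumed multiplicativity of $\alpha$ for $\cdot$. So it remains to show that the conjunction of Hom-associativity of $\bullet$, the Hom-Jacobi identity for $\{~,~\}$, and the Hom-Leibniz identity binding them is equivalent to \eqref{1op_hom-Poisson}.

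My plan is to expand each Hom-Poisson axiom in terms of the original product $\cdot$, obtaining relations among the twelve trilinear expressions $(a\cdot b)\cdot\alpha(c)$ and $\alpha(a)\cdot(b\cdot c)$ as $(a,b,c)$ runs over the permutations of $(x,y,z)$. Write $P(x,y,z) := 3\,\as_{\cdot,\alpha}(x,y,z) - R(x,y,z)$ for the difference of the two sides of \eqref{1op_hom-Poisson}, where $R(x,y,z)$ denotes the right-hand side there; a direct check shows that $R$ is antisymmetric under the transposition $x\leftrightarrow z$. For the implication from \eqref{1op_hom-Poisson} to the three Hom-Poisson axioms, I would extract each axiom from an appropriate linear combination of the six permutations of $P$: the symmetrization $P(x,y,z)+P(z,y,x)$, together with its permutations in the remaining variables, should recover Hom-associativity of $\bullet$; the cyclic sum $P(x,y,z)+P(y,z,x)+P(z,x,y)$, combined with the antisymmetry of $\{~,~\}$, should yield the Hom-Jacobi identity; and a partial antisymmetrization such as $P(x,y,z)-P(y,x,z)$ should produce the Hom-Leibniz identity. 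For the converse, assuming the three Hom-Poisson axioms, one reverses this bookkeeping: a specific linear combination of Hom-associativity of $\bullet$, the Hom-Jacobi identity, and the Hom-Leibniz identity (each possibly at a permuted triple) reproduces $P(x,y,z)=0$.

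The argument is entirely the Hom-algebraic analogue of the classical polarization/depolarization of Poisson algebras due to Markl--Remm \cite{MR}. Because $\alpha$ appears consistently in a single outer slot of every trilinear monomial that occurs, the classical linear combinations transfer verbatim with $\alpha$ inserted, and no additional identities in $\alpha$ beyond its multiplicativity are required. The main obstacle is purely combinatorial: one must verify that the correct coefficients are chosen so that the twelve permuted monomials recombine exactly as prescribed. Guided by the classical case, I would organize this bookkeeping by decomposing the action of $S_3$ on the space of trilinear operators into its trivial, sign, and standard (two-dimensional) irreducible components, and match each Hom-Poisson axiom with the component cut out by a specific symmetrization of \eqref{1op_hom-Poisson}.
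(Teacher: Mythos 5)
Your proposal is correct and follows essentially the same route as the paper: the symmetrization $P(x,y,z)+P(z,y,x)$ is exactly the paper's flexibility identity \eqref{as_sum}, the cyclic sum is its identity \eqref{as_cyclic}, and the Hom-Leibniz compatibility is obtained (as in the paper, via Proposition \ref{flex_Leibniz_Proposition}) from flexibility rather than needing a separate antisymmetrization. The only cosmetic difference is that the paper proves the forward direction by directly computing both sides of \eqref{1op_hom-Poisson} to be $\tfrac14\{\alpha(y),\{z,x\}\}$ and $\tfrac34\{\alpha(y),\{z,x\}\}$ respectively, rather than by formally inverting the linear combinations.
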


\begin{proof}
Suppose that $(A, \bullet , \{~,~\}, \alpha)$ is a Hom-Poisson algebra.
Since
\begin{equation}
\forall\ x,y \in A,\ x \cdot y = \frac{1}{2}(\{x,y\} + x \bullet y),
\end{equation}
we have, by expansion,
\begin{equation*}
\as_{\cdot,\alpha}(x,y,z) = (x \cdot y) \cdot \alpha(z) - \alpha(x) \cdot (y \cdot z) = \frac{1}{4} \{\alpha(y),\{z,x\}\},
\end{equation*}
and, on the other hand, using that the multiplication $\bullet$ is Hom-associative and commutative, and that $\{~,~\}$ is a Hom-Lie bracket,
\begin{equation*}
(x \cdot z) \cdot \alpha(y) + (y \cdot z) \cdot \alpha(x) - (y \cdot x) \cdot \alpha(z) - (z \cdot x) \cdot \alpha(y) = \frac{3}{4} \{\alpha(y),\{z,x\}\}.
\end{equation*}
We thus have the equation \eqref{1op_hom-Poisson}.

Suppose now that the equation \eqref{1op_hom-Poisson} is verified. We have to show that $\bullet$ is Hom-associative and that $\{~,~\}$ is a Hom-Lie bracket.

Using the relation \eqref{1op_hom-Poisson}, we obtain the identities
\begin{gather}
\forall\ x,y,z \in A \quad \as_{\cdot,\alpha}(x,y,z) + \as_{\cdot,\alpha}(y,z,x) + \as_{\cdot,\alpha}(z,x,y) = 0  \label{as_cyclic} \\
\forall\ x,y,z \in A \quad \as_{\cdot,\alpha}(x,y,z) + \as_{\cdot,\alpha}(z,y,x) = 0. \label{as_sum}
\end{gather}

This last identity \eqref{as_sum} shows that $(A, \cdot , \alpha)$ is a Hom-flexible algebra using  Lemma \ref{flex_lemma}.

We now obtain
\begin{equation*}
\begin{split}
& \as_{\bullet,\alpha}(x,y,z) = (x \bullet y) \bullet \alpha(z) - \alpha(x) \bullet (y \bullet z) \\
=& \as_{\cdot,\alpha}(y,z,x) + \as_{\cdot,\alpha}(x,z,y) - (\as_{\cdot,\alpha}(z,y,x) + \as_{\cdot,\alpha}(x,y,z)) \\
\stackrel{\eqref{as_sum}}{=}& 0.
\end{split}
\end{equation*}
So the product $\bullet$ is Hom-associative and commutative by definition. Moreover,
\begin{equation*}
\begin{split}
&J_{\{~,~\},\alpha}(x,y,z) = \{\alpha(x),\{y,z\}\} + \{\alpha(y),\{z,x\}\} + \{\alpha(z),\{x,y\}\} \\
=& - (\as_{\cdot,\alpha}(x,y,z) + \as_{\cdot,\alpha}(y,z,x) + \as_{\cdot,\alpha}(z,x,y)) +\\
\ &  \as_{\cdot,\alpha}(x,z,y) + \as_{\cdot,\alpha}(y,x,z) + \as_{\cdot,\alpha}(z,y,x) \\
\stackrel{\eqref{as_cyclic}}{=}& 0,
\end{split}
\end{equation*}
so $\{~,~\}$ is a Hom-Lie bracket. Since $A$ is flexible,  Proposition \ref{flex_Leibniz_Proposition} leads to  the compatibility between $\bullet$ and $\{~,~\}$,
\begin{equation*}
\{\alpha(x),y \bullet z\} = \{x,y\} \bullet \alpha(z) + \alpha(y) \bullet \{x,z\}.
\end{equation*}
So $(A, \bullet , \{~,~\} , \alpha)$ is a Hom-Poisson algebra.
\end{proof}

\begin{proposition}
Let $(A, \cdot)$ be a $1$-operation Poisson algebra, and $\alpha : A \to A$ be a linear map multiplicative for the multiplication $\cdot$, i.e. $\alpha \circ \cdot = \cdot \circ \alpha^{\otimes 2}$, then $A_\alpha = (A, \cdot_\alpha = \alpha \circ \cdot, \alpha)$ is a $1$-operation Hom-Poisson algebra.
\end{proposition}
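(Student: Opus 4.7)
The plan is to mimic the twisting proofs already used for Hom-associative, Hom-Lie, and Hom-Poisson algebras (cf.\ Theorem \ref{twist-ass_Lie} and the corresponding Hom-Poisson twisting theorem), namely: compute both sides of the defining identity \eqref{1op_hom-Poisson} for $\cdot_\alpha$, pull out a factor of $\alpha^2$ using multiplicativity, and reduce the claim to the untwisted identity satisfied by $(A,\cdot)$.

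First I would compute the Hom-associator of $(A,\cdot_\alpha,\alpha)$. Expanding the definition and using $\alpha\circ\cdot = \cdot\circ\alpha^{\otimes 2}$ twice, one finds
\[
\as_{\cdot_\alpha,\alpha}(x,y,z) = \alpha\bigl(\alpha(x\cdot y)\cdot\alpha(z)\bigr) - \alpha\bigl(\alpha(x)\cdot\alpha(y\cdot z)\bigr) = \alpha^{2}\bigl(\as_{\cdot,Id}(x,y,z)\bigr),
\]
exactly as in the proof of Theorem \ref{twist-ass_Lie}(1).

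Next I would treat each of the four terms on the right-hand side of \eqref{1op_hom-Poisson} for $\cdot_\alpha$. A typical term is
\[
(a\cdot_\alpha b)\cdot_\alpha \alpha(c) = \alpha\bigl(\alpha(a\cdot b)\cdot\alpha(c)\bigr) = \alpha^{2}\bigl((a\cdot b)\cdot c\bigr),
\]
again by multiplicativity of $\alpha$. Applying this to each of the four terms, the full right-hand side of \eqref{1op_hom-Poisson} for $\cdot_\alpha$ equals
\[
\alpha^{2}\bigl((x\cdot z)\cdot y + (y\cdot z)\cdot x - (y\cdot x)\cdot z - (z\cdot x)\cdot y\bigr).
\]

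Combining the two computations, the identity \eqref{1op_hom-Poisson} for $(A,\cdot_\alpha,\alpha)$ is just $\alpha^{2}$ applied to the identity \eqref{1op_hom-Poisson} for $(A,\cdot, Id)$, which holds by hypothesis. Hence $A_\alpha$ is a $1$-operation Hom-Poisson algebra. There is no real obstacle here; the whole proof is a two-line computation once one observes that both sides transform uniformly under $\alpha^{2}$, so the only care needed is bookkeeping when pushing $\alpha$ through the nested products via multiplicativity.
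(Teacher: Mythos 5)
Your proof is correct and follows essentially the same route as the paper: both compute the Hom-associator of $\cdot_\alpha$ and each term of the right-hand side of \eqref{1op_hom-Poisson}, pull out a common factor of $\alpha^{2}$ via multiplicativity, and reduce to the untwisted identity for $(A,\cdot)$. The only difference is presentational (you treat the two sides separately, the paper chains the equalities), so nothing further is needed.
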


\begin{proof}
We have
\begin{equation*}
\begin{split}
3 \as_{\cdot_\alpha,\alpha}(x,y,z) &= (x \cdot_\alpha y) \cdot_\alpha \alpha(z) - \alpha(x) \cdot_\alpha (y \cdot_\alpha z) \\
&= \alpha(\alpha(x \cdot y) \cdot \alpha(z)) - \alpha(\alpha(x) \cdot \alpha(y \cdot z)) = \alpha^2((x \cdot y) \cdot z - x \cdot (y \cdot z)),
\intertext{and since $\cdot$ verifies the $1$-operation equation,}
3 \as_{\cdot_\alpha,\alpha}(x,y,z) &= \alpha^2((x \cdot z) \cdot y + (y \cdot z) \cdot x - (y \cdot x) \cdot z - (z \cdot x) \cdot y) \\
&= \alpha(\alpha(x \cdot z) \cdot \alpha(y)) + \alpha(\alpha(y \cdot z) \cdot \alpha(x)) \\ 
& \ \ - \alpha(\alpha(y \cdot x) \cdot \alpha(z)) - \alpha(\alpha(z \cdot x) \cdot \alpha(y)) \\
&= (x \cdot_\alpha z) \cdot_\alpha(y) + (y \cdot_\alpha z) \cdot_\alpha \alpha(x) - (y \cdot_\alpha x) \cdot_\alpha \alpha(z) - (z \cdot_\alpha x) \cdot_\alpha \alpha(y).
\end{split}
\end{equation*}
\end{proof}

\section{Hom-coPoisson structures} \label{Hom-coPoisson}

\begin{definition}
A \emph{Hom-coPoisson algebra} consists of a cocommutative coassociative Hom-coalgebra $(A,\Delta,\varepsilon,\alpha)$ equipped with a skew-symmetric linear map $\delta : A \to A \otimes A$, the Hom-coPoisson cobracket, satisfying the following conditions
\begin{enumerate}[(i)]
\item (Hom-coJacobi identity)
\begin{equation} \label{Hom-coJacobi}
(Id+\sigma+\sigma^2) \circ (\alpha \otimes \delta) \circ \delta = 0,
\end{equation}
\item (Hom-coLeibniz rule)
\begin{equation} \label{Hom-coLeibniz}
(\Delta \otimes \alpha) \circ \delta = (\alpha \otimes \delta) \circ \Delta + \tau_{23} \circ (\delta \otimes \alpha) \circ \Delta.
\end{equation}
\end{enumerate}
It is denoted by a tuple $(A,\Delta,\varepsilon,\alpha,\delta)$.
\end{definition}

\begin{proposition}
Let $(A,\Delta,\varepsilon,\alpha,\delta)$ be a Hom-coPoisson algebra and $\beta: A \to A$ be a Hom-coPoisson algebra morphism. Then $(A,\Delta_\beta = \Delta \circ \beta,\varepsilon,\alpha \circ \beta,\delta_\beta = \delta \circ \beta)$ is a Hom-coPoisson algebra.
\end{proposition}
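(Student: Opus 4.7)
The plan is to treat this as a twisting result analogous to Theorems \ref{thmConstrHomCoalg} and \ref{thmConstrHomLieCoalg}, verifying each clause of the Hom-coPoisson definition in turn. Since $\beta$ is a Hom-coPoisson morphism, we may use that it commutes with $\alpha$, that $(\beta \otimes \beta) \circ \Delta = \Delta \circ \beta$, that $(\beta \otimes \beta) \circ \delta = \delta \circ \beta$, and that $\varepsilon \circ \beta = \varepsilon$. These intertwining relations will allow us to pull every occurrence of $\beta$ past the structure maps and collect powers of $\beta$ on the right, so that each identity reduces to the corresponding original identity applied to $\beta^2$.

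First, I would invoke Theorem \ref{thmConstrHomCoalg} directly: it gives that $(A,\Delta_\beta,\varepsilon,\alpha\circ\beta)$ is a counital Hom-coassociative coalgebra. Cocommutativity is immediate, since $\tau_{12}\circ\Delta_\beta = (\tau_{12}\circ\Delta)\circ\beta = \Delta\circ\beta = \Delta_\beta$. Second, skew-symmetry of $\delta_\beta$ follows in the same one-line manner from skew-symmetry of $\delta$. Third, I would establish Hom-coJacobi for $\delta_\beta$ exactly as in the proof of Theorem \ref{thmConstrHomLieCoalg}: the intertwining $(\beta\otimes\beta)\circ\delta=\delta\circ\beta$ lets us rewrite
\[
(\alpha\circ\beta \otimes \delta_\beta)\circ\delta_\beta = (\alpha\otimes\delta)\circ\delta\circ\beta^2,
\]
and then applying $(\mathrm{Id}+\sigma+\sigma^2)$ and the original Hom-coJacobi identity kills the expression.

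The main (and only non-routine) step is the Hom-coLeibniz rule. For the left-hand side I would compute
\[
(\Delta_\beta \otimes \alpha\circ\beta)\circ\delta_\beta \;=\; (\Delta\otimes\alpha)\circ(\beta\otimes\beta)\circ\delta\circ\beta \;=\; (\Delta\otimes\alpha)\circ\delta\circ\beta^2,
\]
using the intertwining of $\beta$ with $\Delta$ and with $\delta$. By the original Hom-coLeibniz rule \eqref{Hom-coLeibniz}, this equals
\[
(\alpha\otimes\delta)\circ\Delta\circ\beta^2 + \tau_{23}\circ(\delta\otimes\alpha)\circ\Delta\circ\beta^2.
\]
For the right-hand side I would do the symmetric manipulation on each summand, pulling $(\beta\otimes\beta)$ through $\Delta$ (resp.\ $\delta$) on the outer and inner tensor slots and collecting a $\beta^2$ on the right, arriving at exactly the same expression. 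The only care needed is to confirm that $\tau_{23}$ commutes with $(\beta\otimes\beta\otimes\beta)$, which is immediate since $\tau_{23}$ just permutes tensor factors.

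The hardest bookkeeping is in this last step, but nothing deeper than the matching identities of Theorems \ref{thmConstrHomCoalg} and \ref{thmConstrHomLieCoalg} is required; no new cohomological or combinatorial input is needed, because the morphism hypothesis on $\beta$ (as opposed to a weaker ``weak morphism'' condition) is strong enough to push $\beta$ through both $\Delta$ and $\delta$ simultaneously. I therefore expect no substantive obstacle, only a careful display of the two sides of \eqref{Hom-coLeibniz} as in the computation above.
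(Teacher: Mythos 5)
Your proposal is correct and follows essentially the same route as the paper's own proof: invoke Theorem \ref{thmConstrHomCoalg} for the coassociative part, Theorem \ref{thmConstrHomLieCoalg} for the Hom-Lie coalgebra part, and verify Hom-coLeibniz by pulling $\beta$ through both structure maps so that each side becomes the original identity composed with $\beta^2$. (Your displayed left-hand side $(\Delta\otimes\alpha)\circ\delta\circ\beta^2$ is in fact the correct form; the paper's text drops the $\delta$ in a typo.)
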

\begin{proof}
Theorem \ref{thmConstrHomCoalg} insures that $(A,\Delta_\beta,\varepsilon,\alpha \circ \beta)$ is a coassociative Hom-coalgebra and Theorem \ref{thmConstrHomLieCoalg} that $(A,\delta_\beta,\alpha \circ \beta)$ is a Hom-Lie coalgebra. It remains to show the compatibility  condition \ref{Hom-coLeibniz}. On the left hand side, we have
\begin{equation*}
(\Delta_\beta \otimes \alpha \circ \beta) \circ \delta_\beta = (\Delta \circ \beta \otimes \alpha \circ \beta) \circ \delta \circ \beta = (\Delta \otimes \alpha) \circ \beta^2,
\end{equation*}
and the right hand side gives
\begin{align*}
(\alpha &\circ \beta \otimes \delta_\beta) \circ \Delta_\beta + \tau_{23} \circ (\delta_\beta \otimes \alpha \circ \beta) \circ \Delta_\beta \\
&= (\alpha \circ \beta \otimes \delta \circ \beta) \circ \Delta \circ \beta + \tau_{23} \circ (\delta \circ \beta \otimes \alpha \circ \beta) \circ \Delta \circ \beta \\
&= \left[ (\alpha \otimes \delta) \circ \Delta + \tau_{23} \circ (\delta \otimes \alpha) \circ \Delta \right] \circ \beta^2,
\end{align*}
which ends the proof.
\end{proof}

We may state the following Corollaries. Starting from a classical coPoisson algebra, we may construct Hom-coPoisson algebras using coPoisson algebra morphisms. On the other hand a Hom-coPoisson algebra gives rise to infinitely many Hom-coPoisson algebras.

\begin{corollary} \hfill
\begin{enumerate}
\item Let $(A,\Delta,\varepsilon,\delta)$ be a coPoisson algebra and $\beta : A \to A$ be a coPoisson algebra morphism.
Then $(A,\Delta_\beta = \Delta \circ \beta,\varepsilon,\beta,\delta_\beta = \delta \circ \beta)$ is a Hom-coPoisson algebra.
\item Let $(A,\Delta,\varepsilon,\alpha,\delta)$  be a Hom-coPoisson algebra.
Then for any non negative integer $n$, we have  $(A,\Delta\circ\alpha^n,\varepsilon,\alpha^{n+1},\delta\circ\alpha^n)$ is a Hom-coPoisson algebra.
\end{enumerate}
\end{corollary}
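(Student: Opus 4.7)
The plan is to derive both parts of the corollary as immediate specializations of the preceding Proposition.

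For part (1), I would first observe that any classical coPoisson algebra $(A,\Delta,\varepsilon,\delta)$ can be regarded as a Hom-coPoisson algebra with twisting map $\alpha = Id$: with this choice, the Hom-coassociativity axiom, the Hom-coJacobi identity \eqref{Hom-coJacobi}, and the Hom-coLeibniz rule \eqref{Hom-coLeibniz} collapse to the classical coassociativity, coJacobi identity, and coLeibniz rule respectively. Next, a coPoisson algebra morphism $\beta$ commutes with $\Delta$ and $\delta$ (in the tensor sense) and trivially commutes with $Id$, so it qualifies as a Hom-coPoisson morphism of $(A,\Delta,\varepsilon,Id,\delta)$. Applying the Proposition then delivers the Hom-coPoisson structure $(A,\Delta\circ\beta,\varepsilon,Id\circ\beta,\delta\circ\beta) = (A,\Delta_\beta,\varepsilon,\beta,\delta_\beta)$, which is exactly the claim.

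For part (2), I would specialize the Proposition by taking $\beta = \alpha^n$. The only point needing verification is that $\alpha^n$ is a Hom-coPoisson morphism of $(A,\Delta,\varepsilon,\alpha,\delta)$ to itself. The commutation $\alpha^n \circ \alpha = \alpha \circ \alpha^n$ is automatic. The multiplicativity identities $(\alpha^n)^{\otimes 2}\circ\Delta = \Delta\circ\alpha^n$ and $(\alpha^n)^{\otimes 2}\circ\delta = \delta\circ\alpha^n$ follow by a routine induction on $n$ from the standing multiplicativity hypothesis on $\alpha$ (together with the counit preservation $\varepsilon\circ\alpha = \varepsilon$ already noted for Hom-bialgebras). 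With $\alpha^n$ thus qualified as a Hom-coPoisson morphism, the Proposition yields $(A,\Delta\circ\alpha^n,\varepsilon,\alpha\circ\alpha^n,\delta\circ\alpha^n) = (A,\Delta\circ\alpha^n,\varepsilon,\alpha^{n+1},\delta\circ\alpha^n)$ as asserted.

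The argument is essentially mechanical once the preceding Proposition is in hand. The only mild obstacle, confined to part (2), is the inductive check that iterates of $\alpha$ inherit the multiplicativity properties for $\Delta$ and $\delta$; this is entirely routine given the paper's blanket assumption that all Hom-structures under consideration are multiplicative, and no new structural ideas are needed.
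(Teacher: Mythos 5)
Your proposal is correct and matches the paper's intent exactly: the paper offers no separate proof of this corollary, presenting both items as immediate specializations of the preceding Proposition (part (1) by viewing the classical coPoisson algebra as a Hom-coPoisson algebra with $\alpha = Id$, part (2) by taking $\beta = \alpha^n$). Your additional remark that the multiplicativity of $\alpha^n$ follows by induction from the paper's blanket multiplicativity assumption is the only verification needed, and you have handled it.
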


\begin{definition}
A \emph{Hom-coPoisson bialgebra} $(A,\mu,\eta,\Delta,\varepsilon,\alpha,\delta)$ is a Hom-coPoisson algebra $(A,\Delta,\varepsilon,\alpha,\delta)$ which is also a Hom-bialgebra $(A,\mu,\eta,\Delta,\varepsilon,\alpha)$, the two structures being compatible in the sense that $\delta$ is a $\Delta$-derivation,
\begin{equation*} \label{Delta-der}
\delta \circ \mu =(\mu\otimes\mu)\circ\tau_{23}\circ(\delta \otimes \Delta + \Delta \otimes \delta).
\end{equation*}
A \emph{Hom-coPoisson Hopf algebra} $(A,\mu,\eta,\Delta,\varepsilon,S,\alpha,\delta)$ is a Hom-coPoisson bialgebra $(A,\mu,\eta,\Delta,\varepsilon,\alpha,\delta)$ with an antipode $S$, such that the tuple $(A,\mu,\eta,\Delta,\varepsilon,S,\alpha)$ is a Hom-Hopf algebra.
\end{definition}

We extend  the connection  between Lie bialgebras and coPoisson-Hopf algebras presented in \cite{ChariPressley} to the Hom setting.
\begin{proposition}
Let $(\mathfrak{g},[~,~],\alpha)$ be a Hom-Lie algebra. If its universal enveloping algebra $U_{HLie}(\mathfrak{g})$ has a Hom-coPoisson structure $\delta$, making it a Hom-coPoisson bialgebra, then $\delta(\mathfrak{g}) \subset \mathfrak{g} \otimes \mathfrak{g}$, and $\delta|_\mathfrak{g}$ equips $(\mathfrak{g},[~,~],\alpha,\delta,Id)$ with a structure of Hom-Lie bialgebra. Conversely, for a Hom-Lie bialgebra $(\mathfrak{g},[~,~],\delta,\alpha)$, the cobracket $\delta : \mathfrak{g} \to \mathfrak{g} \otimes \mathfrak{g}$ extends uniquely to a Hom-coPoisson cobracket on $U_{HLie}(\mathfrak{g})$, which makes $U_{HLie}(\mathfrak{g})$ a Hom-coPoisson bialgebra.
\end{proposition}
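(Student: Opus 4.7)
The proof has two directions, mirroring the classical correspondence between Lie bialgebras and coPoisson Hopf algebras but requiring care with the twisting map $\alpha$.

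For the forward direction, the plan is to first establish that $\delta(\mathfrak{g}) \subseteq \mathfrak{g} \otimes \mathfrak{g}$, then verify the Hom-Lie bialgebra axioms on the restriction. The key structural fact is the characterization of $\mathfrak{g}$ inside $U_{HLie}(\mathfrak{g})$ as the primitive elements, namely those $x$ with $\Delta(x) = \alpha(x) \otimes 1 + 1 \otimes \alpha(x)$. Given $x \in \mathfrak{g}$, I would write $\delta(x) = \sum_i a_i \otimes b_i$ and apply the Hom-coLeibniz identity
\begin{equation*}
(\Delta \otimes \alpha) \circ \delta = (\alpha \otimes \delta) \circ \Delta + \tau_{23} \circ (\delta \otimes \alpha) \circ \Delta
\end{equation*}
to $x$. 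Using $\delta(1)=0$ (which follows from the $\Delta$-derivation identity applied to $1\cdot 1 = 1$) and $\alpha(1) = 1$, together with the multiplicativity $\delta \circ \alpha = \alpha^{\otimes 2} \circ \delta$, one reduces the equation to a form where comparing the $\alpha(b_i)$-components (which can be taken linearly independent without loss of generality) forces $\Delta(a_i) = \alpha(a_i) \otimes 1 + 1 \otimes \alpha(a_i)$, so each $a_i$ is primitive and lies in $\mathfrak{g}$. Skew-symmetry of $\delta$ yields the same conclusion for each $b_i$. The Hom-coJacobi identity for $\delta|_\mathfrak{g}$ is then inherited by restriction, and the Hom-Lie bialgebra compatibility \eqref{CompaCondHLieBialg} follows by evaluating the $\Delta$-derivation identity on $x \otimes y - y \otimes x$ for $x, y \in \mathfrak{g}$, using $[x,y] = xy - yx$ in $U_{HLie}(\mathfrak{g})$ and the primitivity of $x$ and $y$.

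For the converse, I would set $\delta(1) = 0$, keep $\delta$ as given on $\mathfrak{g}$, and extend inductively to products by the derivation formula
\begin{equation*}
\delta(xy) = (\mu \otimes \mu) \circ \tau_{23} \circ (\delta \otimes \Delta + \Delta \otimes \delta)(x \otimes y).
\end{equation*}
Uniqueness is built into this prescription once well-definedness is secured. Since $U_{HLie}(\mathfrak{g}) = F_{HNAs}(\mathfrak{g})/I^\infty$, well-definedness reduces to showing that the tentative $\delta$ on $F_{HNAs}(\mathfrak{g})$ annihilates the relation submodule $I^\infty$; this amounts to checking that the derivation rule is consistent with Hom-associativity of $\mu$ and with the compatibility $\delta \circ \alpha = \alpha^{\otimes 2} \circ \delta$, both following from iterating the formula and invoking the analogous properties of $\Delta$ and $\alpha$. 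Once $\delta$ is defined globally, the Hom-coJacobi and Hom-coLeibniz identities propagate from $\mathfrak{g}$ to all of $U_{HLie}(\mathfrak{g})$ by induction on product length, since both sides of each identity behave as derivations in each argument and agree on generators by the Hom-Lie bialgebra hypothesis.

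The main obstacle is the well-definedness in the converse direction: the relations in $I^\infty$ involve the weighted-tree construction described in the excerpt with subtle $\alpha$-twisted associativity constraints, so one must verify that the derivation-based extension of $\delta$ descends through the quotient. Concretely, for generators $x, y, z \in \mathfrak{g}$ one checks $\delta\bigl((xy)\alpha(z)\bigr) = \delta\bigl(\alpha(x)(yz)\bigr)$ by a tensor computation invoking the compatibility of $[~,~]$, $\delta$, and $\alpha$ on $\mathfrak{g}$. A subtler point in the forward direction is ensuring that the primitive decomposition produced by the Hom-coLeibniz identity lands genuinely in $\mathfrak{g} \otimes \mathfrak{g}$ rather than some $\alpha$-shifted subspace; here the multiplicativity $\delta \circ \alpha = \alpha^{\otimes 2} \circ \delta$ is precisely what aligns the twists so that the primitivity conclusion can be read off unambiguously.
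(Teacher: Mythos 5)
Your proposal follows essentially the same route as the paper's proof: the forward direction uses the Hom-coLeibniz rule together with $\delta(1)=0$, $\alpha(1)=1$ and the multiplicativity $\delta\circ\alpha=\alpha^{\otimes 2}\circ\delta$ to show the first tensor factors of $\delta(x)$ are primitive, then skew-symmetry to conclude $\delta(\mathfrak{g})\subset\mathfrak{g}\otimes\mathfrak{g}$, and the $\Delta$-derivation identity applied to $xy-yx$ for the compatibility condition, while the converse extends $\delta$ by the same derivation formula $\overline{\delta}(xy)=\overline{\delta}(x)\Delta(y)+\Delta(x)\overline{\delta}(y)$. Your extra attention to well-definedness of this extension through the quotient $F_{HNAs}(\mathfrak{g})/I^\infty$ is a point the paper's proof passes over silently, but it does not change the overall argument.
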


\begin{proof}
Let $\delta : U_{HLie}(\mathfrak{g}) \to U_{HLie}(\mathfrak{g}) \otimes U_{HLie}(\mathfrak{g})$ be a Hom-coPoisson cobracket on $U_{HLie}(\mathfrak{g})$. To show that $\delta(\mathfrak{g}) \subset \mathfrak{g} \otimes \mathfrak{g}$, let $x \in \mathfrak{g}$, and write $\delta(x) = \sum_{(x)} x^{(1)} \otimes x^{(2)}$ where $x^{(1)},x^{(2)} \in U_{HLie}(\mathfrak{g})$. We may assume that the $x^{(2)}$ are linearly independent. By the Hom-coLeibniz condition \eqref{Hom-coLeibniz}, we have
\begin{equation*}
\begin{split}
\sum_{(x)} \Delta(x^{(1)}) \otimes \alpha(x^{(2)}) =& \alpha(1) \otimes \delta(\alpha(x)) + \alpha(\alpha(x)) \otimes \delta(1) \\[-2ex]
& + \tau_{23} \circ \left( \delta(1) \otimes \alpha(\alpha(x)) + \delta(\alpha(x)) \otimes \alpha(1) \right)
\end{split}
\end{equation*}
since $x \in \mathfrak{g}$ and $\Delta(x) = 1 \otimes \alpha(x) + \alpha(x) \otimes 1$. Moreover, $\alpha$ is a morphism so $\alpha(1) = 1$ and $\delta$ is a $\Delta$-derivation so $\delta(1) = 0$, hence
\begin{align*}
\sum_{(x)} \Delta(x^{(1)}) \otimes \alpha(x^{(2)}) &= 1 \otimes \delta(\alpha(x)) + \tau_{23} \circ \left( \delta(\alpha(x)) \otimes 1 \right) \\
&= \sum_{(x)} \left( 1 \otimes \alpha(x^{(1)}) + \alpha(x^{(1)}) \otimes 1 \right) \otimes \alpha(x^{(2)})
\end{align*}
using the multiplicativity $\delta \circ \alpha = \alpha^{\otimes 2} \circ \delta$ of the Hom-coPoisson morphism $\alpha$. It follows that the $x^{(1)}$ are Hom-primitive elements ($\Delta (x)=1\otimes \alpha (x)+\alpha (x)\otimes 1$) of $U_{HLie}(\mathfrak{g})$, hence $\delta(\mathfrak{g}) \subset \mathfrak{g} \otimes U_{HLie}(\mathfrak{g})$. Since $\delta$ is skew-symmetric,
\[
\delta(\mathfrak{g}) \subset \left( \mathfrak{g} \otimes U_{HLie}(\mathfrak{g}) \right) \cap  \left( U_{HLie}(\mathfrak{g}) \otimes \mathfrak{g} \right) = \mathfrak{g} \otimes \mathfrak{g}.
\]
To prove the compatibility condition \eqref{compa_Lie_bialg} for $\delta|_\mathfrak{g}$ and the twisting maps $\alpha$ and $Id$, let $x,y \in \mathfrak{g}$ and compute
\begin{align*}
\delta([x,y]) =& \delta(xy-yx) \\
=& \delta(x)\Delta(y) + \Delta(x)\delta(y) - \delta(y)\Delta(x) - \Delta(y)\delta(x) \\
=& [\Delta(x),\delta(y)] - [\Delta(y),\delta(x)] \\
=& [\alpha(x),y^{(1)}] \otimes y^{(2)} + y^{(1)} \otimes [\alpha(x),y^{(2)}] \\
&- [\alpha(y),x^{(1)}] \otimes x^{(2)} - x^{(1)} \otimes [\alpha(y),x^{(2)}] \\
=& ad_{\alpha(x)}(\Delta(y)) - ad_{\alpha(y)}(\Delta(y)).
\end{align*}
Conversely, $\delta : \mathfrak{g} \to \mathfrak{g} \otimes \mathfrak{g}$ uniquely extends in $\overline{\delta} : U_{HLie}(\mathfrak{g}) \to U_{HLie}(\mathfrak{g}) \otimes U_{HLie}(\mathfrak{g})$ such that $\overline{\delta}|_\mathfrak{g} = \delta$, with the formula
\[
\overline{\delta}(xy) = \overline{\delta}(x)\Delta(y)+\Delta(x)\overline{\delta}(y).
\]
This gives $U_{HLie}(\mathfrak{g})$ a structure of Hom-coPoisson bialgebra.
\end{proof}

\subsection{Duality}

In this section, we  extend to Hom-algebras the result stated in \cite{OhPark}, that the Hopf dual of a coPoisson Hopf algebra is a Poisson-Hopf algebra.

\begin{definition}
An algebra $A$ over $\K$ is said to be an almost normalizing extension over $\K$ if $A$ is a finitely generated $\K$-algebra with generators $x_1,\dotsc,x_n$ satisfying the condition
\begin{equation}
x_i x_j - x_j x_i \in \sum_{l=1}^n \K x_l + \K
\end{equation}
for all $i,j$.
\end{definition}

\begin{lemma}
Let $A$ be an almost normalizing extension of $\K$ with generators $x_1,\dotsc,x_n$. Then $A$ is spanned by all standard monomials
\begin{equation*}
x_1^{r_1} x_2^{r_2} \dotsm x_n^{r_n}, \qquad r_i \in \N
\end{equation*}
together with the unity 1.
\end{lemma}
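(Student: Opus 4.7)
The plan is a standard PBW-style rewriting argument, set up as a double induction. Since $A$ is generated as a $\K$-algebra by $x_1,\ldots,x_n$, every element of $A$ is a $\K$-linear combination of $1$ and of \emph{words} $w = x_{i_1} x_{i_2} \cdots x_{i_k}$ with $k\ge 1$ and indices in $\{1,\ldots,n\}$. It therefore suffices to show that every such word can be rewritten as a linear combination of standard monomials $x_1^{r_1}\cdots x_n^{r_n}$ together with~$1$.

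First I would induct on the length $k$ of the word, and, for fixed $k$, on the number of \emph{inversions} of $w$, defined as
\[
\mathrm{inv}(w) = \#\{(s,t) : 1\le s<t\le k,\ i_s > i_t\}.
\]
If $\mathrm{inv}(w)=0$ then $w$ is already a standard monomial and there is nothing to do. Otherwise there exist adjacent indices $s,s+1$ with $i_s > i_{s+1}$, and I would apply the defining relation
\[
x_{i_s} x_{i_{s+1}} = x_{i_{s+1}} x_{i_s} + \sum_{l=1}^n c_l\, x_l + c,\qquad c_l,c\in\K,
\]
which exists by the almost normalizing hypothesis. Substituting this in the middle of $w$ yields
\[
w = x_{i_1}\cdots x_{i_{s-1}}\bigl(x_{i_{s+1}} x_{i_s}\bigr)x_{i_{s+2}}\cdots x_{i_k}
\;+\;\sum_l c_l\, w_l\;+\;c\,w',
\]
where the first word has the same length $k$ but strictly fewer inversions, and each $w_l$, $w'$ has length $k-1$ or $k-2$. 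By the (outer) induction on $k$, the terms of length $< k$ are already $\K$-linear combinations of standard monomials and~$1$; by the (inner) induction on $\mathrm{inv}$, the first term is as well. Hence $w$ is a linear combination of standard monomials and~$1$.

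The base cases $k=0$ (giving $1$) and $k=1$ (giving a single $x_i$) are immediate, and the words of length $2$ are already covered by a single application of the argument above. The only nontrivial point is to verify that the double induction is well-founded; this is clear because each rewriting step either strictly decreases the word length or preserves the length while strictly decreasing the inversion count. I do not expect any real obstacle beyond bookkeeping: the whole content of the proof is that the relations $x_ix_j-x_jx_i\in\sum_l\K x_l+\K$ are precisely what is needed to push every generator of larger index to the right at the cost of strictly lower-degree remainder terms.
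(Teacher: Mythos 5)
Your proof is correct and follows essentially the same route as the paper, which simply states that the result "follows immediately from induction on the degree of monomials" without further detail. Your double induction (on word length, then on inversion count) is the standard way of making that one-line argument precise, and the verification that each rewriting step either shortens the word or strictly reduces the inversion count is sound.
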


\begin{proof}
This follows immediately from induction on the degree of monomials.
\end{proof}

Recall that the bialgebra (resp. Hopf) dual $A^\circ$ of a Hom-bialgebra (resp.Hom-Hopf algebra) $A$ consists of
\begin{equation*}
A^\circ = \{f \in A^*, f(I)=0\ \text{for some cofinite ideal $I$ of $A$}\},
\end{equation*}
where $A^*$ is the dual vector space of $A$.

\begin{theorem} \label{thmHopfDual}
Let $A$ be a Hom-coPoisson bialgebra (resp. Hopf algebra) with Poisson co-bracket $\delta$ and twisting map $\alpha$. If $A$ is an almost normalizing extension over $\K$, then the bialgebra (resp. Hopf) dual $A^\circ$ is a Hom-Poisson bialgebra (resp. Hopf algebra) with twisting map $\alpha^\circ$ and bracket
\begin{equation} \label{dual-bracket}
\{f,g\}(x) = \langle \delta(x),f \otimes g \rangle, \qquad x \in A
\end{equation}
for any $f,g \in A^\circ$, where $\langle \cdot,\cdot \rangle$ is the natural pairing between the vector space $A \otimes A$ and its dual vector space.
\end{theorem}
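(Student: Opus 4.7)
The approach is to dualize every piece of structure: the Hom-bialgebra part is already handled by the earlier duality theorem for Hom-bialgebras, so only the Hom-Poisson structure on $A^\circ$ needs to be built and shown to be compatible. I would proceed in four main steps.

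\textbf{Step 1: Setting up the bracket and showing it lands in $A^\circ$.} Set $\{f,g\}(x) := \langle \delta(x), f \otimes g \rangle$ for $f,g \in A^\circ$ and $x \in A$. A priori this gives only an element of $A^*$; the serious technical point is to produce a cofinite ideal annihilated by $\{f,g\}$. Let $I, J$ be cofinite ideals of $A$ with $f(I) = 0 = g(J)$. Using that $\delta$ is a $\Delta$-derivation and that $A$ is an almost normalizing extension, I would argue that $\delta$ sends every finite-codimensional ideal into a finite sum $\sum_k L_k \otimes M_k$ with $L_k, M_k$ of finite codimension: the almost normalizing hypothesis lets me work with the finitely many generators $x_1,\dots,x_n$, express arbitrary elements in terms of standard monomials, and use the derivation rule inductively to reduce $\delta$ of a cofinite ideal to a controllable object. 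A suitable cofinite ideal $K$ (built from $I \cap J$ and the images of $\delta$ on the generators) then annihilates $\{f,g\}$. This well-definedness step is where the almost normalizing extension hypothesis is really used, and I expect it to be the main obstacle.

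\textbf{Step 2: Hom-Lie structure on $A^\circ$.} Antisymmetry of $\{,\}$ is immediate from the skew-symmetry of $\delta$. For the Hom-Jacobi identity, dualize the Hom-coJacobi identity~\eqref{Hom-coJacobi}: for $f, g, h \in A^\circ$ and $x \in A$,
\begin{equation*}
\{\alpha^\circ(f), \{g,h\}\}(x) = \langle (\alpha \otimes \delta) \circ \delta(x),\, f \otimes g \otimes h \rangle,
\end{equation*}
and summing over the cyclic permutations produces $\langle (\mathrm{Id} + \sigma + \sigma^2) \circ (\alpha \otimes \delta) \circ \delta (x), f \otimes g \otimes h \rangle = 0$.

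\textbf{Step 3: Hom-Leibniz rule.} The multiplication on $A^\circ$ is $fg = \Delta^\circ(f \otimes g)$, with $(fg)(x) = \langle \Delta(x), f \otimes g \rangle$. Pairing both sides of the desired identity $\{fg, \alpha^\circ(h)\} = \alpha^\circ(f)\{g,h\} + \{f,h\}\alpha^\circ(g)$ against an arbitrary $x \in A$ gives, on the left, $\langle (\Delta \otimes \alpha) \circ \delta(x),\, f \otimes g \otimes h \rangle$, and on the right, $\langle (\alpha \otimes \delta) \circ \Delta(x) + \tau_{23} \circ (\delta \otimes \alpha) \circ \Delta(x),\, f \otimes g \otimes h \rangle$. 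Equality is exactly the Hom-coLeibniz rule~\eqref{Hom-coLeibniz} on $A$.

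\textbf{Step 4: Bialgebra compatibility and the Hopf case.} The Hom-Poisson bialgebra condition requires that $\{,\}$ is a coderivation with respect to the coproduct $\mu^\circ$ on $A^\circ$, i.e.\ $\mu^\circ \circ \{,\} = (\{,\} \otimes \Delta^\circ + \Delta^\circ \otimes \{,\}) \circ \text{(appropriate shuffle)}$. Testing this identity against elements of $A \otimes A$ converts it, via the pairing, into the statement that $\delta$ is a $\Delta$-derivation, which holds by Definition of Hom-coPoisson bialgebra. Finally, in the Hopf case, the antipode $S$ of $A$ dualizes to an antipode $S^\circ$ on $A^\circ$ by standard arguments already used in the Hom-Hopf duality, so $A^\circ$ inherits a Hom-Poisson Hopf structure.
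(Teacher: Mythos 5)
Your proposal follows essentially the same route as the paper: dualize each axiom (skew-symmetry, Hom-coJacobi, Hom-coLeibniz, and the $\Delta$-derivation property of $\delta$) through the natural pairing to obtain the corresponding axiom for the bracket on $A^\circ$. The only difference is that you sketch the well-definedness argument in Step 1, which the paper does not reproduce but instead defers to the reference \cite{OhPark}; your outline of that step is consistent with what is needed.
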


\begin{proof}
The proof is almost the same as in \cite{OhPark}. We do not reproduce here the first step showing that the bracket \eqref{dual-bracket} is well-defined, it uses the fact that $A$ is an almost normalizing extension over $\K$.

The skew-symmetry follows from $\tau_{12} \circ \delta = -\delta$, we have
\begin{align*}
\{g,f\}(x) &= \langle \delta(x),g \otimes f \rangle = \langle \tau_{12} \circ \delta, f \otimes g \rangle \\
&= - \langle \delta(x), f \otimes g \rangle = - \{f,g\}(x),
\end{align*}
for all $x \in A$.

The equation \eqref{dual-bracket} satisfies the Hom-Leibniz rule: since
\begin{equation}
\{fg,\alpha^\circ(h)\}(x) = \langle (\Delta \otimes \alpha) \circ \delta(x), f \otimes g \otimes h \rangle
\end{equation}
and
\begin{align*}
& (\alpha^\circ(f) \{g,h\} + \{f,h\} \alpha^\circ(g))(x) \\
&= \langle (\alpha \otimes \delta) \circ \Delta(x),f \otimes g \otimes h \rangle + \langle \tau_{23} \circ (\delta \otimes \alpha) \circ \Delta(x),f \otimes g \otimes h \rangle
\end{align*}
for $x \in A$ and $f,g,h \in A^\circ$, it is enough to show that
\begin{equation}
(\Delta \otimes \alpha) \circ \delta = (\alpha \otimes \delta) \circ \Delta + \tau_{23} \circ (\delta \otimes \alpha) \circ \Delta,
\end{equation}
but this is just the Hom-coLeibniz rule for $\delta$.

The equation \eqref{dual-bracket} satisfies the Hom-Jacobi identity: we have
\begin{align*}
\{\alpha^\circ(f),\{g,h\}\}(x) &= \langle (\alpha \otimes \delta) \circ \delta(x),f \otimes g \otimes h \rangle, \\
\{\alpha^\circ(g),\{h,f\}\}(x) &= \langle \sigma \circ (\alpha \otimes \delta) \circ \delta(x),f \otimes g \otimes h \rangle, \\
\{\alpha^\circ(h),\{f,g\}\}(x) &= \langle \sigma^2 \circ (\alpha \otimes \delta) \circ \delta(x),f \otimes g \otimes h \rangle,
\end{align*}
for $x \in A$ and $f,g,h \in A^\circ$. Hence \eqref{dual-bracket} satisfies the Hom-Jacobi identity if and only if $\delta$ satifies
\begin{equation*}
(Id+\sigma+\sigma^2) \circ (\alpha \otimes \delta) \circ \delta = 0,
\end{equation*}
which is just the Hom-coJacobi identity of $\delta$.

The bracket defined by \eqref{dual-bracket} satisfies the compatibility condition with the comultiplication of the Hom-bialgebra (resp. Hopf algebra), it is a $\mu$-coderivation: since $\delta$ is a $\Delta$-derivation, we have for $f,g \in A^\circ$
\begin{align*}
\Delta(\{f,g\})(x \otimes y) &= \{f,g\}(x y) = \langle \delta(x y),f \otimes g \rangle \\
&= \langle \delta(x) \Delta(y),f \otimes g \rangle + \langle \Delta(x) \delta(y),f \otimes g \rangle \\
&= \langle \delta(x),f_1 \otimes g_1 \rangle \langle \Delta(y),f_2 \otimes g_2 \rangle \\
&\quad + \langle \Delta(x),f_1 \otimes g_2 \rangle \langle \delta(y),f_2 \otimes g_2 \rangle \\
&= \{f_1,g_1\}(x) (f_2 g_2)(y) + (f_1 g_1)(x) \{f_2,g_2\}(y) \\
&= \{\Delta(f),\Delta(g)\}(x \otimes y).
\end{align*}

Finally, the bracket defined by \eqref{dual-bracket} equips $A^\circ$ with the structure of a Hom-Poisson bialgebra (resp. Hopf algebra), the twisting map being $\alpha^\circ$.
\end{proof}

Let $(\mathfrak{g},[~,~],\delta,\alpha)$ be a Hom-Lie bialgebra, $U_{HLie}(\mathfrak{g})$ the universal enveloping Hom-bialgebra of $\mathfrak{g}$ with comultiplication $\Delta$. The cobracket $\delta : \mathfrak{g} \to \mathfrak{g} \otimes \mathfrak{g}$ is extended uniquely to a $\Delta$-derivation $\overline{\delta} : U_{HLie}(\mathfrak{g}) \to U_{HLie}(\mathfrak{g}) \otimes U_{HLie}(\mathfrak{g})$ such that $\overline{\delta}|_{\mathfrak{g}} = \delta$ and $\overline{\delta}(xy) = \overline{\delta}(x) \Delta(y) + \Delta(x) \overline{\delta}(y)$. Then $U_{HLie}(\mathfrak{g})$ is a Hom-coPoisson bialgebra with cobracket $\overline{\delta}$.

\begin{corollary}
Let $(\mathfrak{g},[~,~],\delta,\alpha)$ be a finite dimensional Hom-Lie bialgebra. Then the  dual $U_{HLie}(\mathfrak{g})^\circ$ of the universal enveloping Hom-bialgebra $U_{HLie}(\mathfrak{g})$ is a Hom-Poisson bialgebra with Poisson bracket
\begin{equation*}
\{f,g\}(x) = \langle \overline{\delta}(x), f \otimes g \rangle, \qquad x \in U_{HLie}(\mathfrak{g})
\end{equation*}
for $f,g \in U_{HLie}(\mathfrak{g})^\circ $.
\end{corollary}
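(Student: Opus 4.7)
The plan is to apply Theorem~\ref{thmHopfDual} directly. By the preceding proposition, $U_{HLie}(\mathfrak{g})$ carries the structure of a Hom-coPoisson bialgebra with cobracket $\overline{\delta}$, obtained as the unique $\Delta$-derivation extending $\delta$. Once we verify the hypothesis that $U_{HLie}(\mathfrak{g})$ is an almost normalizing extension over $\K$, the theorem produces a Hom-Poisson bialgebra structure on the dual $U_{HLie}(\mathfrak{g})^\circ$, with bracket precisely
$$\{f,g\}(x) = \langle \overline{\delta}(x), f \otimes g \rangle.$$
Note that the finite dimensionality of $\mathfrak{g}$ is also implicitly used to ensure that $\alpha^\circ$ behaves well on the finite dual; in the finite-dimensional case $U_{HLie}(\mathfrak{g})^\circ$ contains enough functionals for the pairing to separate points and make the bracket well-defined.

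To verify the almost normalizing property, fix a basis $\{x_1,\dots,x_n\}$ of $\mathfrak{g}$, available because $\mathfrak{g}$ is finite dimensional. Since $U_{HLie}$ is the left adjoint of the commutator functor $HLie$ from multiplicative Hom-associative algebras to multiplicative Hom-Lie algebras (see \cite{Yau:EnvLieAlg}), the algebra $U_{HLie}(\mathfrak{g})$ is generated as a Hom-associative algebra by the image of $\mathfrak{g}$, and is thus finitely generated by $x_1,\dots,x_n$. Furthermore, the unit $\mathfrak{g}\to HLie(U_{HLie}(\mathfrak{g}))$ of the adjunction is a morphism of Hom-Lie algebras, identifying the original bracket with the commutator in $U_{HLie}(\mathfrak{g})$. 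Consequently
$$x_i x_j - x_j x_i \;=\; [x_i,x_j] \;\in\; \mathfrak{g} \;=\; \sum_{l=1}^{n} \K\, x_l \;\subset\; \sum_{l=1}^{n}\K\, x_l + \K,$$
which is exactly the defining condition of an almost normalizing extension.

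With the hypothesis verified, Theorem~\ref{thmHopfDual} applies to the Hom-coPoisson bialgebra $(U_{HLie}(\mathfrak{g}),\overline{\delta})$ and yields the desired Hom-Poisson bialgebra structure on $U_{HLie}(\mathfrak{g})^\circ$ with the claimed bracket formula. The main potential obstacle is the identification $x_i x_j - x_j x_i = [x_i,x_j]$ \emph{inside} $U_{HLie}(\mathfrak{g})$: in the Hom-setting, $U_{HLie}(\mathfrak{g})$ is defined via weighted trees and the quotient by a relation submodule $I^\infty$ precisely so that the commutator identity holds, and rather than unwinding the tree computation one should invoke Yau's construction \cite{Yau:EnvLieAlg} to conclude.
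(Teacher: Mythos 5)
Your proof is correct and takes essentially the same route as the paper: the paper's own proof simply observes that $U_{HLie}(\mathfrak{g})$ is an almost normalizing extension over $\K$ with generators a basis $\{x_1,\dotsc,x_n\}$ of $\mathfrak{g}$ and then invokes Theorem~\ref{thmHopfDual}. Your explicit verification that $x_i x_j - x_j x_i = [x_i,x_j] \in \sum_{l}\K x_l$ via Yau's construction of $U_{HLie}$ merely spells out what the paper leaves implicit.
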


\begin{proof}
Let $\{x_1,\dotsc ,x_n\}$ be a basis of $\mathfrak{g}$. Then $U_{HLie}(\mathfrak{g})$ is an almost normalizing extension over $\K$ with generators $x_1, \dotsc , x_n$. Thus the result follows from Theorem \ref{thmHopfDual}.
\end{proof}

\section{Deformation theory and Quantization} \label{def-Poisson}

Deformation is one of the oldest techniques used by mathematicians and physicists. The first instances of the
so-called deformation theory were given by Kodaira and Spencer for complex structures and by Gerstenhaber for
associative algebras \cite{Ge}. The Lie algebras case was studied by Nijenhuis and Richardson \cite{NiRi} and the deformation theory for bialgebras and Hopf algebras were developed later by Gerstenhaber and Schack \cite{GerstShack}.
The main and popular tool is the power series ring or more generally any commutative algebras. By standard facts of deformation
theory, the infinitesimal deformations of an algebra of a given type are parametrized by a second cohomology of the
algebra. More generally, it is stated that deformations are controlled by a suitable cohomology.
A modern approach, essentially due to Quillen, Deligne, Drinfeld, and Kontsevich, is that, in characteristic
zero, every deformation problem is controlled by a differential graded Lie algebra, via solutions of Maurer-Cartan
equation modulo gauge equivalence.

Some mathematical formulations of quantization are based on the algebra of observables and consist in replacing
the classical algebra of observables (typically complex-valued smooth functions on a Poisson manifold) by a
noncommutative one constructed by means of an algebraic formal deformation of the classical algebra. The so-called deformation quantization problem was introduced in the seminal paper \cite{BFFLS} by Bayen, Flato, Fronsdal, Lichnerowicz and Sternheimer  (1978).

In 1997, Kontsevich solved a longstanding problem in mathematical physics, that is every Poisson manifold admits formal
quantization which is canonical up to a certain equivalence.

\subsection{Formal deformation of Hom-associative algebras}
In \cite{MakSil-def}  the formal deformation theory is extended  to Hom-associative and Hom-Lie algebras and is provided a suitable  cohomology. The usual result involving  deformation first order element and second order cohomology groups extends in the Hom case. We describe briefly the results in this section. We consider deformations of Hom-associative algebras  where the twist map remains unchanged.

Let $A = (A, \mu_0,\alpha)$ be a Hom-associative algebra. Let $\K[[t]]$ be the power series ring in one variable $t$ and coefficients in $\K$ and $A[[t]]$ be the set of formal power series whose coefficients are elements of $A$, ($A[[t]]$ is obtained by extending the coefficients domain of $A$ from $\K$ to $\K[[t]]$). Then $A[[t]]$ is a $\K[[t]]$-module. When $A$ is finite-dimensional,
we have $A[[t]] = A \otimes \K[[t]]$. Note that $A$ is a submodule of $A[[t]]$.

\begin{definition}
Let $A = (A, \mu_0,\alpha)$ be a Hom-associative algebra. A formal Hom-associative deformation of $A$ is given by a $\K[[t]]$-bilinear  map $\mu_t : A[[t]] \otimes A[[t]] \to A[[t]]$ of the form
\begin{equation}
\mu_t = \sum_{i \geqslant 0} \mu_i t^i 
\end{equation}
where each $\mu_i$ is a $\K$-bilinear map $\mu_i : A \otimes A \to A$ (extended to be $\K[[t]]$-bilinear) such that holds the following formal Hom-associativity condition:
\begin{equation} \label{A-def_hom-ass}
\as_{\mu_t,\alpha} = \mu_t \circ (\mu_t \otimes \alpha - \alpha \otimes \mu_t) = 0.
\end{equation}
\end{definition}

If $\alpha = Id$ the definition reduces to  formal deformations of an associative algebra.

The equation \eqref{A-def_hom-ass} can be written
\begin{equation}
\sum_{i\in \N} \sum_{j \in \N}  \big( \mu_i(\alpha(x),\mu_j(y,z)) - \mu_i(\mu_j(x,y,\alpha(z)) \big) t^{i+j} = 0.
\end{equation}

Introducing the following notation
\begin{equation*}
(x,y,z) \mapsto \mu_i \circ_\alpha \mu_j(x,y,z) := \mu_i(\alpha(x),\mu_j(y,z)) - \mu_i(\mu_j(x,y,\alpha(z)),
\end{equation*}
the deformation equation may be written as follows
\begin{equation}
\sum_{i\in \N} \sum_{j \in \N}  (\mu_i \circ_{\alpha} \mu_j) t^{i+j} = 0 \qquad \text{or} \qquad \sum_{s \in \N} t^s \sum_{i=0}^{s} \mu_i \circ_{\alpha} \mu_{s-i} = 0.
\end{equation}

It is equivalent to the infinite system: $ \sum_{i=0}^{s} \mu_i \circ_{\alpha} \mu_{s-i} = 0, \  s=0,1,\dotsc .$

The $A$-valued Hochschild Type cohomology of Hom-associative algebras  initiated in \cite{MakSil-def} and extended in \cite{AEM} suits and leads to the following cohomological interpretation:
\begin{enumerate}
\item There is a natural bijection between $H^2(A,A)$ and the set of equivalence classes of deformation (mod $t^2$) of $A$.
\item If $H^2(A,A)=0$ then every deformation of $A$ is trivial.
\end{enumerate}

The fact that the antisymmetrization of the first order element of a deformation of an associative algebra defines a Poisson bracket remains true in the Hom setting. More precisely, we have the following theorem.

\begin{theorem}[\cite{MakSil-def}]
Let $A = (A, \mu_0, \alpha)$ be a commutative Hom-associative algebra and $A_t = (A, \mu_t, \alpha)$ be a
deformation of $A$. Consider the bracket defined for $x,y \in A$ by $\{x,y\} = \mu_1(x,y) - \mu_1(y,x)$ where $\mu_1$ is the first order element of the deformation $\mu_t$. Then $(A, \mu_0, \{~,~\}, \alpha)$ is a Hom-Poisson algebra.
\end{theorem}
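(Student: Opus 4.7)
The plan is to deduce the Hom--Jacobi and Hom--Leibniz identities for $\{\cdot,\cdot\}$ from the formal Hom--associativity of $\mu_t$ by passing to the commutator bracket $[a,b]_t := \mu_t(a,b) - \mu_t(b,a)$ on $A[[t]]$. By the remark already recorded in the paper (the commutator of a Hom--associative algebra is a Hom--Lie algebra), the triple $(A[[t]],[\cdot,\cdot]_t,\alpha)$ is a formal Hom--Lie algebra over $\K[[t]]$, so that both antisymmetry and the Hom--Jacobi identity for $[\cdot,\cdot]_t$ hold as identities of formal power series in $t$. The preliminary observation, used throughout, is that commutativity $\mu_0 = \mu_0^{op}$ forces the order--zero coefficient of $[\cdot,\cdot]_t$ to vanish:
\begin{equation*}
[a,b]_t \;=\; t\{a,b\} + t^2\bigl(\mu_2(a,b)-\mu_2(b,a)\bigr) + O(t^3).
\end{equation*}

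For the Hom--Jacobi identity of $\{\cdot,\cdot\}$, I would expand the Hom--Jacobi identity $\circlearrowleft_{x,y,z}[\alpha(x),[y,z]_t]_t = 0$ in powers of $t$. Writing $\mu_t = \mu_0 + t\mu_1 + O(t^2)$ and $[y,z]_t = t\{y,z\} + O(t^2)$, the lowest--order contributions to $[\alpha(x),[y,z]_t]_t$ are $\mu_0$--terms of order $t$ and $t^2$ which cancel by commutativity of $\mu_0$, leaving
\begin{equation*}
[\alpha(x),[y,z]_t]_t \;=\; t^2\,\{\alpha(x),\{y,z\}\} + O(t^3).
\end{equation*}
Extracting the coefficient of $t^2$ in the cyclic sum gives $\circlearrowleft_{x,y,z}\{\alpha(x),\{y,z\}\} = 0$. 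Combined with the manifest antisymmetry of $\{\cdot,\cdot\}$, this makes $(A,\{\cdot,\cdot\},\alpha)$ a Hom--Lie algebra.

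For the Hom--Leibniz identity, I would first establish, in any Hom--associative algebra $(A,\mu,\alpha)$ with commutator $[\cdot,\cdot]$, the auxiliary identity
\begin{equation*}
[\mu(a,b),\alpha(c)] \;=\; \mu(\alpha(a),[b,c]) + \mu([a,c],\alpha(b)),
\end{equation*}
which follows from two applications of Hom--associativity $(ab)\alpha(c) = \alpha(a)(bc)$ to rewrite both $(ab)\alpha(c)$ and $\alpha(c)(ab) = (ca)\alpha(b)$. Applying this identity to $\mu_t$ and $[\cdot,\cdot]_t$ at arguments $x,y,z \in A$, the left--hand side expands as $t\{\mu_0(x,y),\alpha(z)\} + O(t^2)$ (the order--zero contribution vanishing again by commutativity of $\mu_0$), while the right--hand side expands as $t\bigl(\mu_0(\alpha(x),\{y,z\}) + \mu_0(\{x,z\},\alpha(y))\bigr) + O(t^2)$. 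Matching the coefficients of $t$ yields precisely $\{\mu_0(x,y),\alpha(z)\} = \mu_0(\alpha(x),\{y,z\}) + \mu_0(\{x,z\},\alpha(y))$, i.e.\ the Hom--Leibniz rule.

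The main obstacle is not conceptual but bookkeeping: one must carefully expand $\mu_t$ evaluated at $t$--dependent arguments such as $\mu_t(\alpha(x),[y,z]_t)$, and identify at each order the terms that cancel because of commutativity of $\mu_0$ rather than from the deformation equations themselves. Once this is controlled, both identities drop out of the first two orders of the Hom--associativity of $\mu_t$.
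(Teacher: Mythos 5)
Your argument is correct and complete in outline, but it takes a different route from the one the paper indicates. The paper itself only sketches its proof in one sentence -- it is ``mainly computational,'' working directly with the properties of the $\alpha$-associators and with the deformation equations $\sum_{i+j=s}\mu_i\circ_\alpha\mu_j=0$ rewritten via coboundary operators, i.e.\ extracting the orders $s=1,2$ of the deformation equation by hand. You instead package the whole computation into two structural facts: that the commutator bracket $[\cdot,\cdot]_t$ of the Hom-associative algebra $(A[[t]],\mu_t,\alpha)$ is automatically a Hom-Lie bracket (a remark already recorded in Section~1 of the paper, valid over any coefficient ring, hence over $\K[[t]]$), and the commutator-Leibniz identity $[\mu(a,b),\alpha(c)]=\mu(\alpha(a),[b,c])+\mu([a,c],\alpha(b))$, valid in any Hom-associative algebra. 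Your key observation -- that commutativity of $\mu_0$ forces $[\cdot,\cdot]_t=t\{\cdot,\cdot\}+O(t^2)$, so that the Hom-Jacobi identity for $\{\cdot,\cdot\}$ is exactly the $t^2$-coefficient of the Hom-Jacobi identity for $[\cdot,\cdot]_t$, and the Hom-Leibniz rule is the $t$-coefficient of the commutator-Leibniz identity -- is sound; I checked both coefficient extractions and they come out as you claim. What your approach buys is that the Hom-Jacobi identity for $\{\cdot,\cdot\}$ is obtained for free from the general commutator result rather than from a direct associator computation, at the modest cost of having to verify the auxiliary Leibniz-type identity (which, as a point of bookkeeping, needs \emph{three} applications of Hom-associativity, not two: after rewriting $(ab)\alpha(c)=\alpha(a)(bc)$ and $\alpha(c)(ab)=(ca)\alpha(b)$, the residual difference $\alpha(a)(cb)-(ac)\alpha(b)$ vanishes by one more application). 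This is a genuinely cleaner organization of the same low-order information and would be a reasonable substitute for the computational proof the paper defers to its reference.
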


The proof is mainly computational, it leans on the properties of the $\alpha$-associators, and on rewriting the deformation equations in terms of coboundary operators.

\subsection{Deformations of Hom-coalgebras and Hom-Bialgebras}
The formal deformation theory  for bialgebras and Hopf algebras was introduced in \cite{GerstShack}. It is extended to  Hom-coalgebras, Hom-bialgebras and Hom-Hopf algebras  in \cite{DekkarMakhlouf}, where a suitable cohomology  is obtained and the classical results are extended to Hom-setting.

\begin{definition}
Let $\left( A,\Delta,\alpha\right)$ be a Hom-coalgebra. A formal
Hom-coalgebra deformation of $A$ is given by a linear map $\Delta_{t}$ :
$A[[t]] \to A[[t]] \otimes A[[t]]$ such that $\Delta_{t}=
{\displaystyle\sum\limits_{i \geqslant 0}}
\Delta_{i}t^{i}$ where each $\Delta_{i}$ is a linear map $\Delta_{i}:A\to A\otimes A$ (extended to be $\K[[t]]$-linear) such that
holds the following formal Hom-coassociativity condition:
\begin{equation}
\left( \Delta_t \otimes \alpha  - \alpha \otimes \Delta_t \right) \circ \Delta_t = 0.
\end{equation}
\end{definition}

\begin{definition}
Let $\left(  A,\mu, \Delta,\alpha\right)$ be a Hom-bialgebra. A formal Hom-bialgebra deformation of $A$ is given by linear maps $\mu_{t} : A[[t]] \otimes A[[t]] \to A[[t]]$ and  $\Delta_{t} : A[[t]] \to A[[t]] \otimes A[[t]]$ such that
\begin{enumerate}
\item $\left(A[[t]],\mu_t,\alpha\right)$ is a Hom-associative algebra,
\item $\left(A[[t]],\Delta_t,\alpha\right)$ is a Hom-coassociative coalgebra,
\item The multiplication and the comultiplication are compatible, that is
\[
\Delta_t \circ \mu_t = \mu_t \otimes \mu_t \circ \tau_{23} \circ (\Delta_t \otimes \Delta_t).
\]
\end{enumerate}
\end{definition}

It is shown in \cite{DekkarMakhlouf} that  deformations are controlled  by Hochschild type cohomology and any deformation of  unital Hom-associative algebra (resp. counital Hom-coassociative colgebra) is equivalent to unital Hom-associative algebra (resp. counital Hom-coassociative colgebra). Furthermore, any deformation of a Hom-Hopf algebra as a Hom-bialgebra is automatically a Hom-Hopf algebra.

In a similar way as for Hom-associative algebra, we have:

\begin{theorem}
Let $A = (A, \Delta_0, \alpha)$ be a cocommutative Hom-coassociative coalgebra and $A_t = (A, \Delta_t, \alpha)$ be a
deformation of $A$. Consider the cobracket defined for $x \in A$ by $\delta (x) = \Delta_1(x) - \Delta^{op}_1(x)$ where $\Delta_1$ is the first order element of the deformation $\Delta_t$. Then $(A, \Delta_0, \delta, \alpha)$ is a Hom-coPoisson algebra.
\end{theorem}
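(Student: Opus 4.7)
The plan is to dualize the proof of the analogous theorem for Hom-Poisson algebras arising from deformations of commutative Hom-associative algebras. I need to verify the three axioms of a Hom-coPoisson algebra for the tuple $(A, \Delta_0, \delta, \alpha)$: (i) $(A,\Delta_0,\alpha)$ is cocommutative and Hom-coassociative, (ii) $(A,\delta,\alpha)$ is a Hom-Lie coalgebra, and (iii) the Hom-coLeibniz rule relating $\delta$ to $\Delta_0$ holds. Condition (i) is given by hypothesis, so only (ii) and (iii) require work.

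The skew-symmetry of $\delta$ is immediate from $\delta = \Delta_1 - \Delta_1^{op}$, since $\delta^{op} = \tau_{12}\circ\delta = \Delta_1^{op} - \Delta_1 = -\delta$. To obtain the remaining identities, I expand the formal Hom-coassociativity equation
\begin{equation*}
(\Delta_t \otimes \alpha - \alpha \otimes \Delta_t)\circ \Delta_t = 0
\end{equation*}
in powers of $t$, producing at order $s \geqslant 0$ the relations
\begin{equation*}
\sum_{i+j=s}\bigl((\Delta_i \otimes \alpha) - (\alpha \otimes \Delta_i)\bigr)\circ \Delta_j = 0.
\end{equation*}
Order $0$ just recovers the Hom-coassociativity of $\Delta_0$. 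The plan is to extract the Hom-coLeibniz rule from the order-1 equation and the Hom-coJacobi identity from the order-2 equation.

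For the Hom-coLeibniz rule, I start from the order-1 equation
\begin{equation*}
(\Delta_0 \otimes \alpha)\circ \Delta_1 + (\Delta_1 \otimes \alpha)\circ \Delta_0 = (\alpha \otimes \Delta_0)\circ \Delta_1 + (\alpha \otimes \Delta_1)\circ \Delta_0
\end{equation*}
and write the analogous equation with $\Delta_1$ replaced by $\Delta_1^{op}$ (obtained by pre-composing the order-1 relation with $\tau_{12}$ and using Hom-coassociativity of $\Delta_0$, together with its cocommutativity $\Delta_0 = \tau_{12}\circ \Delta_0$). Subtracting, the $\Delta_0$–$\Delta_0$ cross terms cancel thanks to cocommutativity, and after applying $\tau_{23}$ where necessary to move the swapped tensor factor into the third position, I expect the surviving terms to assemble into
\begin{equation*}
(\Delta_0 \otimes \alpha)\circ \delta = (\alpha \otimes \delta)\circ \Delta_0 + \tau_{23}\circ (\delta \otimes \alpha)\circ \Delta_0,
\end{equation*}
which is exactly the required Hom-coLeibniz identity.

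For the Hom-coJacobi identity, I start from the order-2 equation and take the full alternating sum over the cyclic permutations $\mathrm{Id}+\sigma+\sigma^2$ applied on the target tensor factors, as well as antisymmetrizing in the $\Delta_1 \leftrightarrow \Delta_1^{op}$ slot. The $\Delta_0 \otimes \Delta_2$ and $\Delta_2 \otimes \Delta_0$ contributions cancel via Hom-coassociativity of $\Delta_0$ after antisymmetrization (using cocommutativity of $\Delta_0$), leaving the symmetric $\Delta_1$–$\Delta_1$ terms, which reorganize into $(Id+\sigma+\sigma^2)\circ (\alpha \otimes \delta)\circ \delta = 0$. The main obstacle, just as in the dual Hom-Poisson theorem, is the careful bookkeeping of the permutations $\tau_{ij}$ and $\sigma$ needed to recognize which combinations of the deformation-equation terms produce the Hom-coLeibniz and Hom-coJacobi expressions; cocommutativity of $\Delta_0$ is used in several places to identify and eliminate the mixed terms that would otherwise spoil the antisymmetrization.
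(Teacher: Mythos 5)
Your proposal is correct and follows essentially the same route as the paper, which in fact offers no written proof of this statement: it simply asserts that it holds ``in a similar way'' as the Hom-associative case, whose proof is exactly the order-by-order expansion of the deformation equation (order $0$ giving Hom-coassociativity of $\Delta_0$, order $1$ the Hom-coLeibniz rule, order $2$ the Hom-coJacobi identity) that you dualize here. The only detail to watch is that extracting the Hom-coLeibniz rule requires combining three suitably permuted instances of the order-$1$ identity rather than just the identity and its $\tau_{12}$-twisted version, but this is precisely the permutation bookkeeping you already identify as the remaining work, and the cancellations via cocommutativity of $\Delta_0$ go through as you describe.
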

\subsection{Quantization and Twisting star-products}

The deformation quantization problem in the Hom-setting is stated as follows: given a Hom-Poisson algebra (resp. Hom-coPoisson algebra), find a formal deformation of a commutative Hom-associative algebra (resp. cocommutative Hom-coassociative coalgebra) such that the first order element of the deformation defines the Hom-Poisson algebra (resp. Hom-coPoisson algebra). In the classical case the Hom-Poisson structure is called the quasi-classical limit and the deformation is the star-product. This point of view initiated in \cite{BFFLS} attempts to view the quantum mechanics as a deformation of the classical mechanics, the Lorentz group is a deformation of the Galilee group.


%


Let $(A, \cdot,\{~,~\},\alpha)$ be a commutative Hom-associative algebra endowed with a Hom-Poisson barcket $\{~,~\}$.

\begin{definition}
A $\star$-product on $A$ is a one parameter formal deformation defined on $A$ by 
\[
f \star_{t} g = \sum_{r=0}^{\infty}t ^r \mu_{r}(f,g)
\]
such that
\begin{enumerate}
\item The $\star$-product in $A[[t]]$ is Hom-associative, that is
\[
\forall r\in \mathbb{N},\quad \sum_{i=0}^{r}(\mu_{i}(\mu_{r-i}(f,g),\alpha(h))-(\mu_{i}(\alpha(f),\mu_{r-i}(g,h)))=0,
\]
\item \label{initial_product} $\mu_{0}(f,g)=f \cdot g$
\item $\mu_{1}(f,g)-\mu_1(g,f)=\{f,g\}$
\item $\mu_{r}(f,1) = \mu_{r}(1,f) = 0$ $\quad \forall\ r>0$
\end{enumerate}
\end{definition}

\begin{remark} \hfill
\begin{itemize}
\item The condition \eqref{initial_product} shows that $[f,g]:= \frac{1}{2t}\left(f \star_{t} g\; - \; g\star_{t}f\right)$ is a deformation of the Hom-Lie structure $\{~,~\}$.
\item The condition $\mu_{1}(f,g)-\mu_1(g,f)=\{f,g\}$ expresses the correspondence between the deformation and the Hom-Poisson structure
%
\[
\frac{f \star_{t}g-g \star_{t}f}{t}|_{t =0}=\{f,g\}.
\]
\end{itemize}
\end{remark}

Similarly we set the dual version of quantization problem as follows.

Let $A$ be cocommutative Hom-coPoisson bialgebra (resp. Hopf algebra) and let $\delta$ be its Poisson cobracket. A quantization of $A$ is a Hom-bialgebra (resp. Hom-Hopf algebra) deformation $A_t$ of $A$  such that
\[
\delta (x)=\frac{\Delta_t (a)-\Delta_t^{op}(a)}{t} \pmod t,
\]
where $x\in A$ and $a$ is any element of $A[[t]]$ such that $x=a \pmod t$.

\begin{theorem}
Let $(A, \star)$ be an associative deformation of an associative algebra $(A,\mu_0)$, with $\star = \mu_0 + t \mu_1 + t^2 \mu_2 + \dotsb$ \\
Let $\alpha : A \to A$ be a morphism such that for all $i \in \N,\ \alpha \circ \mu_i = \mu_i \circ \alpha^{\otimes 2}$. Then $(A, \star_\alpha = \alpha \circ \star, \alpha)$ is a Hom-associative deformation of $A$.
\end{theorem}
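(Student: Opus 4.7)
The plan is to reduce the statement to a formal power series version of Theorem \ref{twist-ass_Lie}(1). First I would extend $\alpha$ $\K[[t]]$-linearly to an endomorphism of $A[[t]]$, still denoted $\alpha$. The hypothesis $\alpha \circ \mu_i = \mu_i \circ \alpha^{\otimes 2}$ for every $i \in \N$, summed with the factors $t^i$, yields the single identity
\[
\alpha \circ \star = \star \circ \alpha^{\otimes 2},
\]
so $\alpha$ is multiplicative for the $\K[[t]]$-bilinear product $\star$ on $A[[t]]$.

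Next, I would view $(A[[t]], \star)$ as an associative $\K[[t]]$-algebra, which is precisely the content of $\star$ being an associative deformation of $(A,\mu_0)$. Theorem \ref{twist-ass_Lie}(1), read over the ground ring $\K[[t]]$ rather than $\K$, then directly produces a Hom-associative $\K[[t]]$-algebra $(A[[t]], \alpha \circ \star, \alpha)$. Concretely, the same two-line computation
\[
\as_{\star_\alpha, \alpha} = \alpha \circ \star \circ \alpha^{\otimes 2} \circ (\star \otimes \mathrm{Id} - \mathrm{Id} \otimes \star) = \alpha^2 \circ \as_{\star, \mathrm{Id}} = 0
\]
goes through verbatim in $A[[t]]$ because associativity of $\star$ gives $\as_{\star,\mathrm{Id}} = 0$.

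It then remains to recognize $(A[[t]], \star_\alpha, \alpha)$ as a formal deformation of the Hom-associative algebra $(A, \alpha \circ \mu_0, \alpha)$ obtained from $(A,\mu_0)$ by Theorem \ref{twist-ass_Lie}(1). Expanding,
\[
\star_\alpha = \alpha \circ \sum_{i\geq 0} t^i \mu_i = \sum_{i\geq 0} t^i (\alpha \circ \mu_i),
\]
so the zeroth-order term is $\alpha \circ \mu_0$ as required, and $\star_\alpha$ is manifestly of the form $\sum_{i\geq 0} t^i \nu_i$ with $\K$-bilinear $\nu_i = \alpha \circ \mu_i : A\otimes A \to A$.

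There is really no serious obstacle here: the only subtlety is a bookkeeping point, namely that $\alpha$ extends $\K[[t]]$-linearly and that the multiplicativity hypothesis on each $\mu_i$ bundles correctly into multiplicativity for the whole series $\star$. Once that is in place, the result is just Theorem \ref{twist-ass_Lie}(1) applied over $\K[[t]]$. If one prefers an order-by-order argument rather than extending scalars, the same conclusion can be reached by rewriting the Hom-associativity condition $\sum_{i=0}^{s} \nu_i \circ_\alpha \nu_{s-i} = 0$ in terms of $\mu_i$'s and using the multiplicativity of $\alpha$ to peel off an overall factor of $\alpha^2$, reducing each order to the corresponding ordinary associativity relation $\sum_{i=0}^{s} \mu_i \circ \mu_{s-i} = 0$ satisfied by $\star$.
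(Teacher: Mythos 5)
Your proposal is correct and follows essentially the same route as the paper: both observe that multiplicativity of $\alpha$ for each $\mu_i$ gives $\alpha \circ \star = \star \circ \alpha^{\otimes 2}$, and then run the standard twisting computation $\as_{\star_\alpha,\alpha} = \alpha^2 \circ \as_{\star,\mathrm{Id}} = 0$ (the paper writes it out elementwise rather than citing Theorem~\ref{twist-ass_Lie} over $\K[[t]]$, but it is the same argument). Your added remark identifying the zeroth-order term as $\alpha \circ \mu_0$ is a small piece of bookkeeping the paper leaves implicit.
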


\begin{proof}
Since for all $i \in \N,\ \alpha \circ \mu_i = \mu_i \circ \alpha^{\otimes 2}$, we also have $\alpha \circ \star = \star \circ \alpha^{\otimes 2}$. For $f,g,h \in (A, \star_\alpha, \alpha)$, we get
\begin{equation}
\begin{split}
&(f \star_\alpha g) \star_\alpha \alpha(h) = \alpha(\alpha(f) \star \alpha(g)) \star \alpha(\alpha(h)) = \alpha((\alpha(f) \star \alpha(g)) \star \alpha(h)) \\
&= \alpha(\alpha(f) \star (\alpha(g) \star \alpha(h))) = \alpha(\alpha(f)) \star \alpha(\alpha(g) \star \alpha(h)) = \alpha(f) \star_\alpha (g \star_\alpha h),
\end{split}
\end{equation}
the passage from the first line to the second is due to the associativity of $\star$. This shows that the product $\star_\alpha$ is Hom-associative.
\end{proof}

\subsection{Moyal-Weyl Hom-associative algebra}
In the following, we twist the Moyal-Weyl product. It is the associative $\star$-product corresponding to the deformation of the Poisson phase-space bracket, one of the first examples of Kontsevich formal deformation \cite{Kont}.

We consider the Poisson algebra of polynomials of two variables $(\R[x,y],\cdot,\{~,~\})$ where the Poisson bracket of two polynomials is given by $\{f,g\} = \dfrac{\partial f}{\partial x}\dfrac{\partial g}{\partial y} - \dfrac{\partial f}{\partial y}\dfrac{\partial g}{\partial x}$.

The associated associative algebra is $(\R[x,y], \star)$ where the star-product is given by the Moyal-Weyl formula
\begin{equation}
f \star g = \sum_{n \in \N} \frac{\partial^n f}{\partial x^n} \frac{\partial^n g}{\partial y^n} \frac{t^n}{n!} = \sum_{n \in \N} \mu_n(f,g) t^n,
\end{equation}
where $\mu_n(f,g)=\dfrac{1}{n!} \dfrac{\partial^n f}{\partial x^n} \dfrac{\partial^n g}{\partial y^n}.$

\begin{proposition} \label{caract_alpha_ass}
A morphism $\alpha : \R[x,y] \to \R[x,y]$ satisfying $\alpha \circ \mu_i = \mu_i \circ \alpha^{\otimes 2}$ for all $i \in \N$ which gives $(\R[x,y],\ \star_\alpha = \alpha \star, \alpha)$ a structure of Hom-associative algebra is of the form
\begin{equation}
\alpha(x) = ax+b \ \textrm{and}\ \alpha(y) = \frac{1}{a}y+c \qquad \textrm{where} \quad a,b,c \in \R,\ a \neq 0.
\end{equation}
\end{proposition}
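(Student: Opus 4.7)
The plan is to reduce the infinite family of compatibility conditions $\alpha \circ \mu_n = \mu_n \circ \alpha^{\otimes 2}$ to conditions involving only $n=0$ and $n=1$, then solve a small polynomial system, and finally verify that the resulting affine map automatically satisfies all higher compatibilities (after which the preceding theorem supplies the Hom-associative structure).

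First I would exploit $\mu_0$-compatibility: $\mu_0$ is the ordinary commutative product, so $\alpha \circ \mu_0 = \mu_0 \circ \alpha^{\otimes 2}$ means $\alpha$ is a unital $\R$-algebra endomorphism of $\R[x,y]$. Hence $\alpha$ is determined by the two polynomials $P := \alpha(x)$ and $Q := \alpha(y)$. Next I would write out the $\mu_1$-compatibility on the four pairs $(x,x),(x,y),(y,x),(y,y)$. Using $\mu_1(f,g) = \partial_x f \cdot \partial_y g$ and the ring homomorphism property, this yields the system
\begin{equation*}
\partial_x P \cdot \partial_y Q = 1,\qquad \partial_x P \cdot \partial_y P = 0,\qquad \partial_x Q \cdot \partial_y Q = 0,\qquad \partial_x Q \cdot \partial_y P = 0.
\end{equation*}

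The main (and only mildly delicate) step is to solve this system. Because $\R[x,y]$ is an integral domain, the first equation forces $\partial_x P$ and $\partial_y Q$ to be nonzero, and then the second and fourth equations force $\partial_y P = 0$, while the third forces $\partial_x Q = 0$. Thus $P = P(x)$ and $Q = Q(y)$, and the first equation reduces to $P'(x) Q'(y) = 1$ in $\R[x,y]$; separating variables, both derivatives must be nonzero constants with reciprocal values. Integrating gives $P(x) = ax + b$ and $Q(y) = a^{-1}y + c$ for some $a \in \R^\times$ and $b,c \in \R$.

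For the converse (sufficiency) I would use the chain rule applied to $\alpha(f)(x,y) = f(ax+b,\,a^{-1}y+c)$, giving $\partial_x \alpha(f) = a\,\alpha(\partial_x f)$ and $\partial_y \alpha(g) = a^{-1}\,\alpha(\partial_y g)$, hence by iteration
\begin{equation*}
\mu_n(\alpha(f),\alpha(g)) = \tfrac{1}{n!}\, a^n \alpha(\partial_x^n f) \cdot a^{-n} \alpha(\partial_y^n g) = \alpha(\mu_n(f,g)),
\end{equation*}
so every $\mu_n$-compatibility is automatic. The preceding theorem (on twisting an associative deformation by a morphism commuting with all $\mu_i$) then delivers the Hom-associative structure on $(\R[x,y],\star_\alpha,\alpha)$, completing the characterization. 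The only potential pitfall is being careless in Step 3 about using that $\R[x,y]$ has no zero divisors; once that is invoked, the derivation is immediate.
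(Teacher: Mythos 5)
Your proof is correct and follows essentially the same route as the paper's: extract the $\mu_0$- and $\mu_1$-compatibilities on the generators, solve the resulting polynomial system using that $\R[x,y]$ is a domain, and then verify the higher compatibilities for the affine map. Your use of the diagonal pairs $(x,x)$, $(y,y)$ to get $\partial_y P = \partial_x Q = 0$ directly, and of the chain-rule identity $\partial_x\circ\alpha = a\,\alpha\circ\partial_x$ for the $i>1$ check (the paper instead computes on monomials $x^n,y^m$), is a slightly cleaner organization of the same argument.
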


\begin{proof}
Let $\alpha : \R[x,y] \to \R[x,y]$ be a morphism such that for all $i \in \N,\ \alpha \mu_i = \mu_i \alpha^{\otimes 2}$. In particular, for $i=0$,
\begin{equation}
\alpha(fg) = \alpha(f) \alpha(g),
\end{equation}
which shows that $\alpha$ is multiplicative, so it is sufficient to define it on $x$ and $y$. For $i=1$, we get
\begin{equation} \label{mu_1}
\alpha \left(\dfrac{\partial f}{\partial x} \dfrac{\partial g}{\partial y} \right) = \dfrac{\partial \alpha(f)}{\partial x} \dfrac{\partial \alpha(g)}{\partial y},
\end{equation}
which implies that $\alpha(\{f,g\}) = \{\alpha(f),\alpha(g)\}$.

We set $P_1(x,y) := \alpha(x)$ and $P_2 := \alpha(y)$. For $f(x,y)=x$ and $g(x,y)=y$, the equation \eqref{mu_1} gives
\begin{equation*}
1 = \alpha(1) = \frac{\partial P_1}{\partial x} \frac{\partial P_2}{\partial y},
\end{equation*}
so we must have $P_1(x,y) = ax+Q_1(y)$ and $P_2(x,y) = \dfrac{1}{a}y + Q_2(x)$ with $a \in \R \setminus \{0\}$ and $Q_1,Q_2 \in \R[x,y]$. For $f(x,y)=x$ and $g(x,y)=y$, the equation \eqref{mu_1} gives
\begin{equation*}
0 = \alpha(0) = \frac{\partial P_2}{\partial x} \frac{\partial P_1}{\partial y} = Q'_{2,x} Q'_{1,y}.
\end{equation*}
So we can suppose that $Q_1(y) = b$ is constant. Reporting in the equation \eqref{mu_1}, with $f(x,y)=g(x,y)=y$, we find
\begin{equation*}
0 = \alpha(0) = \frac{\partial P_2}{\partial x} \frac{\partial P_2}{\partial y} = Q'_{2,x} \dfrac{1}{a},
\end{equation*}
so $Q_2(x) = c$ is constant.
It remains to verify that for $i > 1$, $\alpha \mu_i = \mu_i \alpha^{\otimes 2}$ \textit{i.e.}\/ for $f,g \in \R[x,y],\ \alpha \left(\dfrac{\partial^i f}{\partial x^i} \dfrac{\partial^i g}{\partial y^i} \dfrac{1}{i!} \right) = \dfrac{\partial^i \alpha(f)}{\partial x^i} \dfrac{\partial^i \alpha(g)}{\partial y^i} \dfrac{1}{i!}$. By multiplicavity of $\alpha$, the only non trivial case is $f(x,y) = x^n$ and $g(x,y)=y^m$. We have
\begin{equation*}
\begin{split}
& \alpha \left(\dfrac{\partial^i f}{\partial x^i} \dfrac{\partial^i g}{\partial y^i} \frac{1}{i!} \right) = \alpha \left( i! \binom{n}{i} x^{n-i} i! \binom{m}{i} y^{m-i} \frac{1}{i!} \right) = i! \binom{n}{i} i! \binom{m}{i} (ax+b)^{n-i} \left(\dfrac{1}{a}y+c\right)^{m-i} \frac{1}{i!} \\
&= i! \binom{n}{i} i! \binom{m}{i} (ax+b)^{n-i}a^i \left(\dfrac{1}{a}y+c\right)^{m-i}\left(\dfrac{1}{a}\right)^i \frac{1}{i!} = \dfrac{\partial^i (ax+b)^n}{\partial x^i} \dfrac{\partial^i \left(\dfrac{1}{a}y+c\right)^m}{\partial y^i} \frac{1}{i!} \\
&= \dfrac{\partial^i \alpha(f)}{\partial x^i} \dfrac{\partial^i \alpha(g)}{\partial y^i} \frac{1}{i!}.
\end{split}
\end{equation*}
\end{proof}

The Hom-algebra $(\R[x,y], \star_\alpha, \alpha)$ is Hom-associative and not associative if $\alpha \neq Id$. Indeed, for $f(x,y) = 1,\ g(x,y) = y$ and $h(x,y) = x$, we have
\begin{gather*}
(f \star_\alpha g) \star_\alpha h = \alpha(\alpha(f) \star \alpha(g)) \star \alpha(h) \\
= \alpha\left(1 \star \frac{1}{a}y+c\right) \star (ax+b) = \alpha\left(\frac{1}{a}y+c\right) \star (ax+b), \\
\intertext{and}
f \star_\alpha (g \star_\alpha h) = \alpha(\alpha(f)) \star \alpha(\alpha(g) \star \alpha(h)) \\
= 1 \star \alpha\left(\left(\frac{1}{a}y+c\right) \star (ax+b) \right) = \alpha\left(\frac{1}{a}y+c\right) \star \alpha(ax+b)
\end{gather*}
which are different in general.

More generally, we can consider the Poisson algebra of polynomials of $n \geqslant 3$ variables $(\R[x_1,\dotsc ,x_n],\cdot,\{~,~\})$ where the Poisson bracket of two polynomials is given by $\displaystyle \{f,g\} = \sum_{1\leqslant i,j \leqslant n} \tau_{ij} \frac{\partial f}{\partial x_i} \frac{\partial g}{\partial x_j}$, with $\tau = (\tau_{ij})$ an antisymmetric $n \times n$ real matrix. \\

The associated associative algebra is $(\R[x_1\dotsc ,x_n], \star)$ where the star-product is given by the Moyal-Weyl formula
\begin{equation}
f \star g = \sum_{n \in \N} \sum_{1\leqslant i_1,j_1,\dotsc ,i_n,j_n\leqslant n} \sigma_{i_1 j_1} \dotsm \sigma_{i_n j_n} \frac{\partial^n f}{\partial x_{i_1} \dotsm \partial x_{i_n}} \frac{\partial^n g}{\partial x_{j_1} \dotsm \partial x_{j_n}} \frac{t^n}{n!},
\end{equation}
where $\sigma = (\sigma_{ij})$ is the matrix whose antisymmetrization is $\tau$.

Set $n>2$ and $\mu_n= \dfrac{\partial^n f}{\partial x_{i_1} \dotsm \partial x_{i_n}} \dfrac{\partial^n g}{\partial x_{j_1} \dotsm \partial x_{j_n}}.$

\begin{proposition}
The only morphisms $\alpha : \R[x_1,\dotsc ,x_n] \to \R[x_1,\dotsc ,x_n]$ satisfying $\alpha \circ \mu_i = \mu_i \circ \alpha^{\otimes 2}$ for all $i \in \N$ which give $(\R[x_1,\dotsc ,x_n],\ \star_\alpha = \alpha \star, \alpha)$ a structure of Hom-associative algebra are of the form
\begin{equation}
\forall\ 1\leqslant i \leqslant n,\ \alpha(x_i) = x_i+b_i \ \quad \textrm{or} \quad \forall\ 1\leqslant i \leqslant n,\ \alpha(x_i) = -x_i+b_i \qquad \textrm{where} \quad b_i \in \R .
\end{equation}
\end{proposition}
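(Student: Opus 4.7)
My plan is to adapt the proof of Proposition~\ref{caract_alpha_ass} to the $n$-variable setting. The key new phenomenon is that in dimension $n \geq 3$ the equations coming from the $\mu_1$-compatibility become rigid enough to force the linear part of $\alpha$ to be $\pm I$, eliminating the one-parameter scaling freedom that was present in dimension $2$.

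The argument begins as in the two-variable case. From $\alpha \circ \mu_0 = \mu_0 \circ \alpha^{\otimes 2}$ we obtain $\alpha(fg) = \alpha(f)\alpha(g)$, so $\alpha$ is a unital algebra endomorphism of $\mathbb{R}[x_1,\ldots,x_n]$, determined by the polynomials $P_l := \alpha(x_l)$. Applying $\alpha \circ \mu_1 = \mu_1 \circ \alpha^{\otimes 2}$ to each coordinate pair $(x_l, x_m)$ yields the polynomial identities
\[
\sum_{1 \leq i, j \leq n} \sigma_{ij}\, \partial_i P_l\, \partial_j P_m \;=\; \sigma_{lm}, \qquad 1 \leq l, m \leq n.
\]
The right-hand side is constant, and a term-by-term analysis (parallel to the two-variable proof, using successively the diagonal cases $l=m$ and the cross cases $l\neq m$) forces each partial derivative $\partial_i P_l$ to be a constant. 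Hence $P_l = \sum_m a_{lm} x_m + b_l$ is affine, the matrix $A = (a_{lm})$ satisfies the $\sigma$-preservation relation $A\sigma A^T = \sigma$, and the translations $b_l$ remain unrestricted since differentiation annihilates them.

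Here the two-variable proof concludes with a one-parameter family of scalings, but for $n \geq 3$ the full system $(A\sigma A^T)_{lm} = \sigma_{lm}$ is over-determined. Combining all of its equations, the only solutions are $A = \pm I$, with a single uniform sign forced by the cross relations for $l \neq m$. Once $A = \pm I$, the higher compatibilities $\alpha \circ \mu_k = \mu_k \circ \alpha^{\otimes 2}$ for $k \geq 2$ are automatic: translations commute with every $\mu_k$ because partial derivatives are translation-invariant, and the negation $x_l \mapsto -x_l$ introduces a sign $(-1)^k$ on each of $\partial^k \alpha(f)$ and $\partial^k \alpha(g)$, whose product $(-1)^{2k}=1$ matches the trivial action of $\alpha$ on the constant coefficients $\sigma_{i_1 j_1}\cdots \sigma_{i_k j_k}$ appearing in $\mu_k$.

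The main obstacle is the linear-algebra rigidity step: establishing that in dimension $n \geq 3$ the system $(A\sigma A^T)_{lm} = \sigma_{lm}$ admits only $A = \pm I$. A concrete plan is to treat the case $n = 3$ explicitly---writing out the nine equations in the entries $a_{lm}$ and combining the diagonal equations $(A\sigma A^T)_{ll} = \sigma_{ll}$ with the off-diagonal relations to pin down each row of $A$---and then deduce the general case $n \geq 3$ by restricting to three-dimensional coordinate subspaces. The verification that each $\alpha(x_l) = \pm x_l + b_l$ does satisfy all the compatibilities $\alpha \circ \mu_k = \mu_k \circ \alpha^{\otimes 2}$ for $k \in \mathbb{N}$ is then direct from the translation-invariance and parity arguments outlined above.
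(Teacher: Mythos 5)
Your overall strategy (reduce $\alpha$ to an affine map, then exploit rigidity of the linear part for $n\geqslant 3$) has the right shape, but both central steps of the necessity direction have genuine gaps, and they stem from the same decision: you aggregate the $\mu_1$-compatibility into the single matrix identity $A\sigma A^{T}=\sigma$. First, from the summed identity $\sum_{i,j}\sigma_{ij}\,\partial_i P_l\,\partial_j P_m=\sigma_{lm}$ you cannot conclude that each $\partial_i P_l$ is constant: taking $\sigma$ antisymmetric (a legitimate choice, since only its antisymmetrization $\tau$ is prescribed) the aggregated equations reduce to Jacobian-type conditions, which admit non-affine polynomial solutions such as $P_1=x_1$, $P_2=x_2+x_1^{2}$, $P_j=x_j$ for $j\geqslant 3$. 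Second, even granting affineness, the claim that $A\sigma A^{T}=\sigma$ forces $A=\pm I$ for $n\geqslant 3$ is false: the stabilizer of a bilinear form is in general a positive-dimensional group (all of $O(n)$ if $\sigma$ is symmetric, a symplectic-type group if it is antisymmetric), so no amount of restricting to three-dimensional coordinate subspaces will pin $A$ down to $\pm I$. As written, your linear-algebra rigidity step cannot be carried out.

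The paper avoids both problems by reading the hypothesis as stated just before the proposition (``Set $n>2$ and $\mu_n=\frac{\partial^n f}{\partial x_{i_1}\dotsm\partial x_{i_n}}\frac{\partial^n g}{\partial x_{j_1}\dotsm\partial x_{j_n}}$''): the $\mu_i$ are the \emph{individual} bidifferential terms, without the $\sigma$-coefficients, so $\alpha$ must intertwine each of them separately. Applied to $f=x_l$, $g=x_m$ this gives $\partial_i P_l\cdot\partial_j P_m=\delta_{il}\delta_{jm}$; a product of polynomials equal to a nonzero constant forces each factor to be a nonzero constant, which yields $P_l=a_l x_l+b_l$ (diagonal, constant Jacobian) together with $a_l a_m=1$ for all $l\neq m$. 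For $n\geqslant 3$ the relations $a_1a_2=a_1a_3=a_2a_3=1$ force all $a_i=\pm1$ with a common sign, which is exactly the dichotomy in the statement and is the extra rigidity over the two-variable case, where only $a_1a_2=1$ is available. Your sufficiency argument (translations and a global sign are compatible with every $\mu_k$ by translation-invariance of derivatives and the cancellation $(-1)^{2k}=1$) is fine and matches what the paper leaves implicit.
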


\begin{proof}
The proof is similar to the case with two variables, we get $\alpha(x_i) = a_i x_i + b_i$, except that this time, $a_i a_j = 1$ for all $i \neq j$, which gives the two cases of the proposition.
\end{proof}

\subsection{Moyal-Weyl Hom-Poisson algebra}

We consider the Poisson algebra of polynomials of two variables $(\R[x,y],\cdot,\{~,~\})$ where the Poisson bracket of two polynomials is given by $\{f,g\} = \dfrac{\partial f}{\partial x}\dfrac{\partial g}{\partial y} - \dfrac{\partial f}{\partial y}\dfrac{\partial g}{\partial x}$.

\begin{proposition} \label{MW_Hom-Poisson}
A morphism $\alpha : \R[x,y] \to \R[x,y]$ which gives $(\R[x,y],\quad \cdot_\alpha = \alpha \circ \cdot,\quad \{~,~\}_\alpha = \alpha \circ \{~,~\},\quad \alpha)$ a structure of Hom-Poisson algebra satisfies the equation
\begin{equation} \label{eq_alpha}
1 = \frac{\partial \alpha(x)}{\partial x} \frac{\partial \alpha(y)}{\partial y} - \frac{\partial \alpha(x)}{\partial y} \frac{\partial \alpha(y)}{\partial x}.
\end{equation}
\end{proposition}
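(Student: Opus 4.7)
The plan is to exploit the Hom-Leibniz identity of the twisted Hom-Poisson structure, specialized at the generators $x$ and $y$ of $\R[x,y]$, and to reduce the resulting polynomial identity to the claimed equation by using that $\alpha$ is at least an algebra morphism, so that $\alpha(fg) = \alpha(f)\alpha(g)$ and $\alpha(1) = 1$.

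Unwinding the definitions $\cdot_\alpha = \alpha \circ \cdot$ and $\{~,~\}_\alpha = \alpha \circ \{~,~\}$, the Hom-Leibniz rule evaluated on arbitrary $f,g,h$ reads
\[
\alpha\bigl(\{\alpha(fg),\alpha(h)\}\bigr) = \alpha\bigl(\alpha(f)\,\alpha(\{g,h\}) + \alpha(\{f,h\})\,\alpha(g)\bigr).
\]
Using $\alpha(fg) = \alpha(f)\alpha(g)$ on the left and expanding by the ordinary Leibniz rule for $\{~,~\}$ in $\R[x,y]$, the identity reduces, after cancellation of the common terms, to
\[
\alpha\bigl(\alpha(f)\,E(g,h) + E(f,h)\,\alpha(g)\bigr) = 0,
\]
where $E(a,b) := \{\alpha(a),\alpha(b)\} - \alpha(\{a,b\})$ measures the failure of $\alpha$ to be multiplicative for the Poisson bracket. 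Setting $g = f$ and using $E(f,f) = 0$ (from the antisymmetry of both $\{~,~\}$ and $\alpha \circ \{~,~\}$), one gets $\alpha\bigl(2\,\alpha(f)\,E(f,h)\bigr) = 0$.

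In the integral domain $\R[x,y]$, taking $f = x$ we have $\alpha(x) \neq 0$, and, assuming $\alpha$ is injective (as is the case for the morphisms giving rise to a nondegenerate Hom-Poisson twist), we may cancel to obtain $E(x,y) = 0$. This is precisely $\{\alpha(x),\alpha(y)\} = \alpha(\{x,y\}) = \alpha(1) = 1$, which upon expansion via the definition of the Poisson bracket yields equation~\eqref{eq_alpha}.

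The main obstacle is the cancellation step: injectivity of $\alpha$ is not guaranteed from the bare hypothesis that $\alpha$ is a morphism. A more direct tactic that sidesteps this is to specialize the Hom-Leibniz identity at a well-chosen tuple of generators, e.g.\ $(f,g,h) = (y,x,y)$. Then $\{y,y\} = 0$ and $\{x,y\} = 1$ make most terms vanish, and the identity collapses to $\alpha\bigl(\alpha(x)\bigl(\{\alpha(x),\alpha(y)\}-1\bigr)\bigr) = 0$, from which the conclusion follows by a single, localized cancellation argument.
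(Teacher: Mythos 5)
Your route is genuinely different from the paper's, and it has a gap at the decisive last step. The paper reads the hypothesis ``morphism'' as ``morphism of Poisson algebras'', i.e.\ $\alpha(\{f,g\}) = \{\alpha(f),\alpha(g)\}$ is assumed from the outset; its proof simply evaluates this identity on monomials $x^k y^l$, $x^p y^q$ and observes that everything reduces to the single condition $\{\alpha(x),\alpha(y)\} = \alpha(\{x,y\}) = \alpha(1) = 1$, which is exactly \eqref{eq_alpha}. You instead assume only that $\alpha$ is an algebra morphism and try to extract \eqref{eq_alpha} from the Hom-Leibniz axiom of the twisted structure. Your reduction of that axiom to $\alpha\bigl(\alpha(f)E(g,h)+E(f,h)\alpha(g)\bigr)=0$, with $E(a,b)=\{\alpha(a),\alpha(b)\}-\alpha(\{a,b\})$, is correct and is a more ambitious starting point than the paper's; but from there the argument does not close.

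The gap is the cancellation. Both of your endgames terminate in a statement of the form $\alpha(P)=0$ with $P=\alpha(y)\bigl(\{\alpha(x),\alpha(y)\}-1\bigr)$ (note that for $(f,g,h)=(y,x,y)$ it is the factor $\alpha(y)$, not $\alpha(x)$, that survives), and you need $P=0$. That requires $P\notin\ker\alpha$ --- in practice, injectivity of $\alpha$ --- followed by a second cancellation of the nonzero factor $\alpha(y)$ in the integral domain $\R[x,y]$. Neither is available from the hypotheses: non-injective unital algebra endomorphisms of $\R[x,y]$ exist (e.g.\ $x\mapsto x$, $y\mapsto x$), and your alternative specialization does not remove the outer $\alpha$, so it reproduces the problem rather than sidestepping it; the promised ``single, localized cancellation argument'' is never supplied. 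To finish along your lines you would have to either add injectivity of $\alpha$ as a hypothesis, or revert to the paper's stronger reading of ``morphism'', under which $E\equiv 0$ by assumption and \eqref{eq_alpha} is nothing more than $E(x,y)=0$ written out using $\alpha(1)=1$.
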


\begin{proof}
Since $\alpha$ is a morphism of Poisson algebra, it satisfies, for all $f,g \in \R[x,y]$
\begin{gather*}
\alpha(f \cdot g) = \alpha(f) \cdot \alpha(g) \\
\alpha(\{f,g\} = \{\alpha(f),\alpha(g)\}.
\end{gather*}

The first equation shows that it is sufficient to define $\alpha$ on $x$ and $y$. For the second equation, we can suppose by linearity that $f(x,y) = x^k y^l$ and $g(x,y) = x^p y^q$. It then rewrites
\begin{equation*}
\begin{split}
kq \alpha(x)^{k-1} \alpha(y)^l \alpha(x)^p \alpha(y)^{q-1} - pl \alpha(x)^k \alpha(y)^{l-1} \alpha(x)^{p-1} \alpha(y)^q \\
= \frac{\partial (\alpha(x)^k \alpha(y)^l)}{\partial x} \frac{\partial (\alpha(x)^p \alpha(y)^q)}{\partial y} - \frac{\partial (\alpha(x)^k \alpha(y)^l)}{\partial y} \frac{\partial (\alpha(x)^p \alpha(y)^q)}{\partial x}
\end{split}
\end{equation*}
which simplifies in
\begin{equation*}
1 = \frac{\partial \alpha(x)}{\partial x} \frac{\partial \alpha(y)}{\partial y} - \frac{\partial \alpha(x)}{\partial y} \frac{\partial \alpha(y)}{\partial x}.
\end{equation*}
\end{proof}

\begin{example}
We give some examples for the morphism $\alpha$. We set 
\begin{gather*}
\alpha(x) = \Gamma_1(x,y) = \sum_{0 \leqslant i,j \leqslant d} a_{ij} x^i y^j \\
\alpha(y) = \Gamma_2(x,y) = \sum_{0 \leqslant i,j \leqslant d} b_{ij} x^i y^j \\
\end{gather*}
with $\Gamma_1,\Gamma_2 \in \R[x,y]$, and $d$ the bigger degree for each variable. Since the equation \eqref{eq_alpha} only contains derivatives of $\Gamma_1$ and $\Gamma_2$, without loss of generality we assume $a_{00} = b_{00} = 0$.

\begin{description}
\item[Degree 1]
\begin{gather*}
\Gamma_1(x,y) = a_{10} x + a_{01} y \\
\Gamma_2(x,y) = b_{10} x + b_{01} y
\end{gather*}
The equation \eqref{eq_alpha} becomes
\begin{equation} \label{eq_alpha_deg1}
1 = a_{10} b_{01} - a_{01} b_{10}.
\end{equation}
The polynomials $\Gamma_1,\Gamma_2$ are of one of the following form
\begin{enumerate}[(i)]
\item $\Gamma_1(x,y) = a_{10} x + a_{01} y, \qquad \Gamma_2(x,y) = -\frac{1}{a_{10}}x \qquad \textrm{with}\ a_{10} \neq 0$ \label{sol_deg1_1},
\item $\Gamma_1(x,y) = \frac{1+a_{01} b_{10}}{b_{01}} x + a_{01} y, \qquad \Gamma_2(x,y) = b_{10} x + b_{01} y \qquad \textrm{with}\ b_{01} \neq 0$. \label{sol_deg1_2}
\end{enumerate}

\item[Degree 2] For simplicity, we only take one of the polynomials of degree two.
\begin{gather*}
\Gamma_1(x,y) = a_{10} x + a_{01} y \\
\Gamma_2(x,y) = b_{10} x + b_{01} y + b_{20} x^2 + b_{11} xy + b_{20} y^2 
\end{gather*}
Arranging the equation \eqref{eq_alpha} by degree, we obtain the following system of equations.
\begin{equation*}
\left\{
\begin{aligned}
1 &= a_{10} b_{01} - a_{01} b_{10} \\
0 &= 2 a_{10} b_{02} - a_{01} b_{11} \\
0 &= a_{10} b_{11} - 2 a_{01} b_{20} \\
\end{aligned}
\right.
\end{equation*}
The polynomials $\Gamma_1,\Gamma_2$ are of one of the following form
\begin{enumerate}[(i)]
\item $\Gamma_1(x,y) = \frac{1+a_{01} b_{10}}{b_{01}} x + a_{01} y, \qquad \Gamma_2(x,y) = b_{10} x + b_{01} y$
\item $\Gamma_1(x,y) = \frac{1}{b_{01}} x, \qquad \Gamma_2(x,y) = b_{20} x^2 + b_{10} x + b_{01} y$
\item $\Gamma_1(x,y) = a_{10} x + \frac{2a_{10} b_{02}}{b_{11}} y, \qquad \Gamma_2(x,y) = \frac{1}{a_{01}} y + \frac{b_{11}^2}{4 b_{02}} x^2 + b_{11} xy + b_{02} y^2$
\item $\Gamma_1(x,y) = a_{10} x + \frac{2 a_{10} b_{02}}{b_{11}} y, \qquad \Gamma_2(x,y) = b_{10} x + \frac{2 a_{10} b_{02} b_{10} + b_{11}}{a_{10} b_{11}} y + \frac{b_{11}^2}{4 b_{02}} x^2 + b_{11} x y + b_{02} y^2$
\end{enumerate}
\end{description}
\end{example}

We now want to deform the classical Moyal-Weyl star-product on $\R[x,y]$ using morphisms $\alpha$ previously found.
For $P,Q \in \R[x,y]$ we define
\begin{equation}
P \star_\alpha Q = \sum_{n \geqslant 0} {\mu_n}_\alpha(P,Q) t^n \quad \textrm{with} \quad {\mu_n}_\alpha(P,Q) = \frac{\partial \alpha(P)}{\partial x_n} \frac{\partial \alpha(Q)}{\partial y_n} \frac{1}{n!}.
\end{equation}

The Hom-associator writes
\begin{equation*}
\begin{split}
\mathfrak{as}_{\star_\alpha,\alpha}(P,Q,R) &= \sum_{p \geqslant 1} \sum_{n=0}^p \big( {\mu_{p-n}}_\alpha({\mu_n}_\alpha(P,Q),\alpha(R) - {\mu_{p-n}}_\alpha(\alpha(P),{\mu_n}_\alpha(Q,R)) \big) t^p \\
&= \sum_{p \geqslant 1} \sum_{n=0}^p \frac{1}{n!(p-n)!} \left( \frac{\partial \alpha\left( \frac{\partial \alpha(P)}{\partial x^n} \frac{\partial \alpha(Q)}{\partial y^n} \right)}{\partial x^{p-n}} \frac{\partial \alpha^2(R)}{\partial y^{p-n}} - \frac{\partial \alpha^2(P)}{\partial x^{p-n}} \frac{\partial \alpha\left( \frac{\partial \alpha(Q)}{\partial x^n} \frac{\partial \alpha(R)}{\partial y^n} \right)}{\partial y^{p-n}} \right),
\end{split}
\end{equation*}
since the multiplication $\cdot_\alpha$ is Hom-associative, the constant term vanishes. \\

Trying to make the coefficient in $t^n$ vanish for particular polynomials $P,Q,R$, we obtain conditions on the morphism $\alpha$.

\subsubsection{\texorpdfstring{Degree $1$ case}{Degree 1 case}}

In degree $1$, the solutions of \eqref{eq_alpha_deg1} are of the form \eqref{sol_deg1_1} or \eqref{sol_deg1_2}. \\

In the first case \eqref{sol_deg1_1}, the morphism $\alpha$ satisfies
\begin{gather*}
\alpha(x) = \Gamma_1(x,y) = a_{10} x + a_{01} y, \\
\alpha(y) = \Gamma_2(x,y) = -\frac{1}{a_{01}}x,
\end{gather*}
with $a_{10} \neq 0$. \\
For the particuliar polynomials $P(x,y) = x, Q(x,y) = y = R(x,y)$, we have
\begin{equation*}
\mathfrak{as}_{\star_\alpha,\alpha}(P,Q,R) = y \neq 0
\end{equation*}
so $\mathfrak{as}_{\star_\alpha,\alpha} \neq 0$ and this morphism $\alpha$ does not give a deformation of the Moyal-Weyl star-product. \\

In the second case \eqref{sol_deg1_2} the morphism $\alpha$ satisfies
\begin{gather*}
\alpha(x) = \Gamma_1(x,y) = \frac{1+a_{01} b_{10}}{b_{01}} x + a_{01} y, \\
\alpha(y) = \Gamma_2(x,y) = b_{10} x + b_{01} y,
\end{gather*}
with $b_{01} \neq 0$. \\
Renaming $c:= a_{01}b_{10}$ if $b_{10} \neq 0$, we can write
\begin{gather*}
\alpha(x) = \Gamma_1(x,y) = \frac{1+c}{b_{01}} x + \frac{c}{b_{10}} y, \\
\alpha(y) = \Gamma_2(x,y) = b_{10} x + b_{01} y.
\end{gather*}
For the particuliar polynomials $P(x,y) = x, Q(x,y) = y = R(x,y)$, the coefficient in $t$ of $\mathfrak{as}_{\star_\alpha,\alpha}(P,Q,R)$ vanishes only if $c=0$ and $b_{01} = 0$ or if $c=0$ and $b_{10} = 0$, which is not possible. \\
If $b_{10} = 0$ then the morphism $\alpha$ satisfies
\begin{gather*}
\alpha(x) = \Gamma_1(x,y) = \frac{1}{b_{01}} x + a_{01} y, \\
\alpha(y) = \Gamma_2(x,y) = b_{01} y,
\end{gather*}
and the coefficient in $t$ of $\mathfrak{as}_{\star_\alpha,\alpha}(P,Q,R)$ vanishes only if $a_{01} = 0$. In that case, the morphism $\alpha$ is as in the Proposition \ref{caract_alpha_ass}, and thus gives a deformation of the Moyal-Weyl star-product.

Finally, the only morphisms $\alpha$ of degree $1$ which give raise to a deformation of the Moyal-Weyl star-product are as in the Proposition \ref{caract_alpha_ass}.

\subsubsection{\texorpdfstring{Degree $2$ case}{Degree 2 case}}

In degree $2$, computations done with the computer algebra system \textit{Mathematica} also led to the case given by the Proposition \ref{caract_alpha_ass}.

\vspace{1cm}

We conjecture that the only morphisms $\alpha$ of the Proposition \ref{MW_Hom-Poisson} are the one found in the Proposition \ref{caract_alpha_ass}.

An interseting question would be to know if twisting by a morphism $\alpha$ is functorial. If this is the case, the previous conjecture would be true and we would have the following commutative diagram
\begin{equation}
\xymatrix{
(A,\mu,\{~,~\}) \ar[r]^-{\text{twist}_\alpha} \ar[d] & (A,\mu_\alpha,\{~,~\}_\alpha,\alpha) \ar[d] \\
(A,\star) \ar[u]^{\text{Kont}} \ar[r]_{\text{twist}_\alpha} & (A,\star_\alpha,\alpha) \ar[u]_{\text{Kont}}
}
\end{equation}
\newline
with $\text{Kont}$ the Kontsevich bijection between the set of equivalence classes of Poisson brackets on the commutative $\K[[t]]$-algebra $A[[t]]$ and the set of equivalence classes of star products, and $\text{twist}_\alpha : \textsf{Poiss} \to \textsf{Hom-Poiss}$ the functor from the category \textsf{Poiss} of Poisson algebras to the category \textsf{Hom-Poiss} of Hom-Poisson algebras. More generally, $\text{twist}_\alpha$ is a functor from \textsf{Struct} to \textsf{Hom-Struct}, where \textsf{Struct} is a category of structures such as associative algebras \textsf{Ass}, Lie algebras \textsf{Lie}, Poisson algebra \textsf{Poiss}, and so on, and \textsf{Hom-Struct} the corresponding category of Hom-structures.

\bibliographystyle{amsplain}

\begin{thebibliography}{11}

\bibitem{AizawaSaito}
\auteur{N. Aizawa and H.  Sato,} $q$-deformation of the Virasoro algebra with central extension,
Physics Letters B, Phys. Lett. B
\textbf{256}, no. 1, 185--190 (1991).


\bibitem{AEM}\auteur{F. Ammar, Z. Ejbehi     and A. Makhlouf,} \emph{ Cohomology and Deformations of Hom-algebras, } Journal of Lie Theory \textbf{21} No. 4,  (2011) 813--836 .

\bibitem {AmmarMakhloufJA2010}\auteur{F. Ammar    and A. Makhlouf,} \emph{Hom-Lie algebras and Hom-Lie admissible superalgebras}, J.  Algebra, Vol. \textbf{324} (7), (2010)  1513--1528.

\bibitem{BFFLS}\auteur{ F. Bayen, M. Flato, C. Fronsdal, A. Lichnerowicz and D. Sternheimer, } \emph{Deformation Theory and Quantization. Deformations of Symplectic Structures}, Annals of Physics, \textbf{111} (1978) 61--110.

\bibitem{Bordeman0} \auteur{M. Bordemann,}The deformation quantization of certain super-Poisson brackets and BRST cohomology. Conf�rence Mosh� Flato 1999, Vol. II (Dijon), 45�68, Math. Phys. Stud., \textbf{22}, Kluwer Acad. Publ., Dordrecht, 2000.

\bibitem{Bordeman1} \bysame \emph{Deformation quantization: a mini-lecture. Geometric and topological methods for quantum field theory}, 3�38, Contemp. Math., \textbf{434}, Amer. Math. Soc., Providence, RI, 2007.

 \bibitem{BordemanMakhloufPetit2005} \auteur{M. Bordemann, A. Makhlouf  and  T. Petit, } \emph{D\'{e}formation par quantification et rigidit\'{e} des alg\`{e}bres enveloppantes.}  J. algebra  \textbf{285} ,  no. 2 (2005), 623--648.


\bibitem{Canepl2009}  \auteur{S. Caenepeel and I.  Goyvaerts }, \emph{Monoidal Hom-Hopf algebras}, Comm. Alg. \textbf{39} (2011) 2216--2240.

\bibitem{ChaiIsKuLuk}  \auteur{M. Chaichian, P. Kulish and 
J. Lukierski,} \emph{ $q$-Deformed Jacobi identity,
$q$-oscillators and $q$-deformed
infinite-dimensional algebras,} Phys. Lett. B
\textbf{237} , no. 3-4, (1990) 401--406.

\bibitem{ChariPressley} \auteur{V. Chari and A. Pressley}, \emph{A guide to quantum groups}, 667 pp, Cambridge University Press (1994).


\bibitem{CurtrZachos1} \auteur{T. L. Curtright and  C. K. Zachos},
Deforming maps for quantum algebras, Phys. Lett.
B \textbf{243}, no. 3, 237--244  (1990).

\bibitem{DekkarMakhlouf} \auteur{K. Dekkar and A. Makhlouf, } \emph{Cohomology and Deformations of Hom-Bialgebras and Hom-Hopf algebras}, In preparation.

\bibitem{Drinfeld} \auteur{V. G. Drinfel'd}, \emph{Hopf algebras and the quantum Yang-Baxter equation},  Soviet Math. Doklady, \textbf{32},   254--258 (1985).

\bibitem{Ge} \auteur{M. Gerstenhaber}, \emph{On the deformation of rings and algebras}, Ann of Math. \textbf{79} (1) (1964), 59--108.

\bibitem{GerstShack} \auteur{ M. Gerstenhaber and  S. D. Schack} \emph{algebras, bialgebras, quantum groups, and algebraic deformations.}  Deformation theory and quantum groups with applications to mathematical physics (Amherst, MA, 1990),  51--92, Contemp. Math., 134, Amer. Math. Soc., Providence, RI, 1992.

\bibitem{Fialowski86}  Fialowski A.: {\it Deformation of Lie algebras,} Math USSR Sbornik,
vol. \textbf{55}, 467--473  (1986).


\bibitem{FregierGohrSilv}\auteur{Y.  Fregier, A.  Gohr  and S.D. Silvestrov,} \emph{Unital algebras of Hom-associative type and surjective or injective twistings}, J. Gen. Lie Theory Appl. Vol. \textbf{3} (4), (2009) 285--295.

\bibitem{Gohr}\auteur{A.  Gohr,} \emph{On Hom-algebras with surjective
twisting}, J. Algebra \textbf{324} (2010) 1483--1491.

\bibitem{GozeRemmPoisson} \auteur{M. Goze and E. Remm,} \emph{Poisson algebras in terms of non-associative algebras}, J. Algebra \textbf{ 320} (2008) 294--317.

\bibitem{HLS} \auteur{J. T. Hartwig, D. Larsson and S.D.  Silvestrov},
Deformations of Lie algebras using
$\sigma$-derivations, J. Algebra \textbf{295},  314--361
(2006).


\bibitem{Hu} \auteur{N.  Hu},  $q$-Witt algebras,
$q$-Lie algebras, $q$-holomorph structure and
representations,  Algebra Colloq. {\bf 6} ,
no. 1, 51--70 (1999).

\bibitem{Kassel1} \auteur{C. Kassel}, Cyclic homology of differential operators,
the Virasoro algebra and a $q$-analogue, Commun. Math. Phys. 146
(1992), 343-351.
\bibitem{Kabook} \bysame
Quantum groups , 531 pp Graduate Texts in
Mathematics, 155. Springer-Verlag, New York
(1995).


\bibitem{Kont} \auteur{M. Kontsevich}, \emph{Deformation Quantization of Poisson Manifolds {I}}, Letters in Mathematical Physics
Volume 66, Number 3, (2003)157--216.

\bibitem{LS1}\auteur{ D. Larsson and S.D.  Silvestrov},
Quasi-hom-Lie algebras, Central Extensions
and 2-cocycle-like identities, J. Algebra
\textbf{288},  321--344 (2005).

\bibitem{Makhlouf-Hopf}  \auteur{A. Makhlouf },
 Degeneration, rigidity and irreducible
components of Hopf algebras,  Algebra
Colloquium, vol  \textbf{12} (2),  241--254 (2005).
\bibitem{Makhlouf-Hermann}  \bysame
 Alg\`{e}bre de Hopf et renormalisation en th\'{e}orie quantique des champs,
   In "Th\'{e}orie quantique des champs : M\'{e}thodes et Applications", Travaux en  Cours, Hermann Paris,    191--242 (2007).

\bibitem{Mak:Almeria}
\bysame
\emph{ Paradigm of Nonassociative Hom-algebras and Hom-superalgebras,} Proceedings of Jordan Structures in Algebra and Analysis Meeting, Eds:  J. Carmona Tapia, A. Morales Campoy, A. M. Peralta Pereira, M. I. Ramirez ilvarez, Publishing house: Circulo Rojo (2010), 145--177.




\bibitem{MakSil-struct} \auteur{A. Makhlouf  and S. Silvestrov}, \emph{Hom-algebra structures}, Journal of Generalized Lie Theory and Applications, Volume 2, No. 2 (2008), 51--64.

\bibitem{HomHopf} \bysame
\emph{Hom-Lie admissible Hom-coalgebras and Hom-Hopf algebras},
 Published  as Chapter 17, pp
189-206, {\rm S. Silvestrov, E. Paal, V. Abramov, A. Stolin, (Eds.),
Generalized Lie theory in Mathematics, Physics and Beyond,
Springer-Verlag, Berlin, Heidelberg, (2008).}

\bibitem{MakSil-def} \bysame
\emph{Notes on Formal deformations of Hom-Associative and Hom-Lie
algebras},  Forum Mathematicum, vol. \textbf{22} (4) (2010) 715--759.



\bibitem{HomAlgHomCoalg} \bysame
\emph{Hom-Algebras and Hom-Coalgebras},  J. of Algebra and its Applications, Vol. \textbf{9}, (2010).



\bibitem{MR} \auteur{M. Markl and E. Remm}, \emph{Algebras with one operation including Poisson and other Lie-admissible algebras}, Journal of Algebra  \textbf{299}, Issue 1,  (2006) 171-189


\bibitem{NiRi} \auteur{A. Nijenhuis and W.  Richardson}, \emph{Cohomology and deformations in graded Lie Algebras}, Bulletin of the American Mathematical Society, \textbf{ 72}, (1966)  1--29



\bibitem{OhPark} \auteur{S.Q. Oh and H.M. Park}, \emph{Duality of co-Poisson Hopf algebras}, Bulletin of the Korean Mathematical Society, Volume 48  No. 1 (2011)  17--21


\bibitem{ORP} \auteur{G. Ortenzi, V. Rubtsov and S.R. Tagne Pelap}, \emph{On the Heisenberg invariance and the Elliptic Poisson tensors}, arXiv:1001.4422v2 (2010).

\bibitem{Sheng} \auteur{Y. Sheng,}  \emph{ Representations of hom-Lie algebras}, Algebra and Representation Theory, DOI:10.1007/s10468-011-9280-8 (2011).

\bibitem{Sweedler}
\auteur{M.E. Sweedler}, \emph{Hopf algebras, }  W.A. Benjamin, Inc. Publishers, New York, 1969.



\bibitem{Yau:EnvLieAlg} D. Yau,
\emph{Enveloping algebra of Hom-Lie algebras}, J. Gen. Lie Theory Appl.
\textbf{2} (2008) 95--108.


\bibitem{Yau:homology}  \bysame
\emph{ Hom-algebras and homology,}  J. Lie Theory \textbf{19} (2009) 409--421.

\bibitem{Yau:YangBaxter} \bysame
\emph{The Hom-Yang-Baxter equation, Hom-Lie algebras, and quasi-triangular bialgebras,} J. Phys. A \textbf{42} (2009)  165--202.

\bibitem{Yau:comodule} \bysame
\emph{Hom-bialgebras and comodule Hom-algebras}, Int. E. J. Alg. \textbf{8} (2010) 45--64.

\bibitem{Yau:YangBaxter2} \bysame
\emph{The Hom-Yang-Baxter equation and Hom-Lie algebras},  J. Math. Phys. \textbf{52} (2011) 053502.


\bibitem{Yau:ClassicYangBaxter} \bysame
\emph{The classical Hom-Yang-Baxter equation and Hom-Lie bialgebras},  e-Print arXiv:0905.1890 (2009).


\bibitem{Yau:Poisson} \bysame \emph{Non-commutative Hom-Poisson algebras}, e-Print arXiv:1010.3408 (2010).


\end{thebibliography}
\providecommand{\bysame}{\leavevmode\hbox to3em{\hrulefill}\thinspace}
\providecommand{\MR}{\relax\ifhmode\unskip\space\fi MR }
\providecommand{\MRhref}[2]{%
  \href{http://www.ams.org/mathscinet-getitem?mr=#1}{#2}
}
\providecommand{\href}[2]{#2}


\noindent 
       Martin Bordemann, Olivier Elchinger and Abdenacer Makhlouf \\
      Universit\'e de Haute Alsace\\
      Laboratoire de Math\'ematique, Informatique et Applications\\
      4, rue des Fr\`eres Lumi\`ere, 68093 Mulhouse cedex France\\
    Martin.Bordemann@uha.fr\\
    Olivier.Elchinger@uha.fr\\
    Abdenacer.Makhlouf@uha.fr

\label{lastpage}
\end{document}